\begin{document}

\title{On uniform canonical bases in $L_p$ lattices and other metric structures}

\author{Itaï \textsc{Ben Yaacov}}

\address{Itaï \textsc{Ben Yaacov} \\
  Université Claude Bernard -- Lyon 1 \\
  Institut Camille Jordan, CNRS UMR 5208 \\
  43 boulevard du 11 novembre 1918 \\
  69622 Villeurbanne Cedex \\
  France}

\urladdr{\url{http://math.univ-lyon1.fr/~begnac/}}

\thanks{Author supported by
  ANR chaire d'excellence junior THEMODMET (ANR-06-CEXC-007) and
  by the Institut Universitaire de France.}

\svnInfo $Id: UniformCB.tex 1354 2012-05-21 08:38:39Z begnac $
\thanks{\textit{Revision} {\svnInfoRevision} \textit{of} \today}

\keywords{stable theory, uniform canonical base, $L_p$ Banach lattice, beautiful pairs, metric valued fields}
\subjclass[2000]{03C45, 46B42, 12J25}

\begin{abstract}
  We discuss the notion of \emph{uniform canonical bases}, both in an abstract manner and specifically for the theory of atomless $L_p$ lattices.
  We also discuss the connection between the definability of the set of uniform canonical bases and the existence of the theory of beautiful pairs (i.e., with the finite cover property), and prove in particular that the set of uniform canonical bases is definable in algebraically closed metric valued fields.
\end{abstract}

\maketitle

\section*{Introduction}

In stability theory, the \emph{canonical base} of a type is a minimal set of parameters required to define the type, and as such it generalises notions such as the field of definition of a variety in algebraic geometry.
Just like the field of definition, the canonical base is usually considered as a set, a point of view which renders it a relatively ``coarse'' invariant of the type.
We may ask, for example, whether a type is definable over a given set (i.e., whether the set contains the canonical base), or whether the canonical base, as a set, is equal to some other set.
However, canonical bases, viewed as sets, cannot by any means classify types over a given model of the theory, and they may very well be equal for two distinct types.
The finer notion of \emph{uniform} canonical bases, namely, of canonical bases from which the types can be recovered uniformly, is a fairly natural one, and has appeared implicitly in the literature in several contexts (e.g., from the author's point of view, in a joint work with Berenstein and Henson \cite{BenYaacov-Berenstein-Henson:AlmostIndiscernible}, where convergence of uniform canonical bases is discussed).

Definitions regarding uniform canonical bases and a few relatively easy properties are given in \fref{sec:UniformCB}.
In particular we observe that every stable theory admits uniform canonical bases \emph{in some imaginary sorts}, so the space of all types can be naturally identified with a type-definable set.
We then turn to discuss the following two questions.

The first question is whether, for one concrete theory or another, there exist \emph{mathematically natural} uniform canonical bases, namely, uniform canonical bases consisting of objects with a clear mathematical meaning.
A positive answer may convey additional insight into the structure of the space of types as a type-definable set.
This is in contrast with the canonical parameters for the definitions, whose meaning is essentially tautological and can therefore convey no further insight.
The case of Hilbert spaces is quite easy, and merely serves as a particularly accessible example.
The case of atomless probability spaces (i.e., probability algebras, or spaces of random variables), treated in \fref{sec:RV}, is not much more difficult.
Most of the work is spent in \fref{sec:Lp} where we construct uniform canonical bases for atomless $L_p$ lattices in the form of ``partial conditional expectations'' $\bE_t[\cdot|E]$ and $\bE_{[s,t]}[\cdot|E]$ (defined there).
To a large extent, it is this last observation which prompted the writing of the present paper.

The second question, discussed in \fref{sec:BeautifulPairs}, is whether the (type-definable) set of uniform canonical bases is in fact definable.
We characterise this situation in terms of the existence of a theory of beautiful pairs.
In \fref{sec:PairsInACMVF} we use earlier results to show that for the theory of algebraically closed metric valued fields, the theory of beautiful pairs does indeed exist, and therefore that the sets of uniform canonical bases (which we do not describe explicitly) are definable.

\medskip

For stability in the context of classical logic we refer the reader to Pillay \cite{Pillay:GeometricStability}.
Stability in the context of continuous logic, as well as the logic itself, are introduced in \cite{BenYaacov-Usvyatsov:CFO}.

\section{Uniform canonical bases}
\label{sec:UniformCB}

In classical logic, stable theories are characterised by the property that for every model $\cM$, every type $p(\bar x) \in \tS_{\bar x}(M)$ is definable, i.e., that for each formula $\varphi(\bar x,\bar y)$ (say without parameters, this does not really matter) there exists a formula $\psi(\bar y)$ (with parameters in $M$) such that for all $\bar b \in M$:
\begin{gather*}
  \varphi(\bar x,\bar b) \in p \qquad \Longleftrightarrow \qquad \vDash \psi(\bar b).
\end{gather*}
In this case we say that $\psi$ is the $\varphi$-definition of $p$, and write
\begin{gather*}
  \psi(\bar y) \quad = \quad d_{p(\bar x)}\varphi(\bar x,\bar y).
\end{gather*}
Obviously, there may exist more than one way of writing a $\varphi$-definition for $p$, but since any two such definitions are over $\cM$ and equivalent there, they are also equivalent in every elementary extension of $\cM$, and thus have inter-definable canonical parameters.
In other words, the canonical parameter of the $\varphi$-definition of $p$ is well-defined, up to inter-definability, denoted $\Cb_\varphi(p)$.
The collection of all such canonical parameters, as $\varphi(\bar x,\bar y)$ varies (and so does $\bar y$) is called the \emph{canonical base} of $p$, denoted $\Cb(p)$.
This is, up to inter-definability, the (unique) smallest set over which $p$ is definable.
The same holds for continuous logic with some minor necessary changes, namely that the $\varphi$-definition may be a definable predicate (i.e. a uniform limit of formulae, rather than a formula), and it defines $p$ in the sense that
\begin{gather*}
  \varphi(\bar x,\bar b)^p \quad = \quad d_{p(\bar x)}\varphi(\bar x,\bar b).
\end{gather*}
We shall hereafter refer to definable predicates as formulae as well, since for our present purposes the distinction serves no useful end.

Since canonical parameters are, \textit{a priori}, imaginary elements, the canonical base is a subset of $M^{eq}$.
For most purposes of abstract model theory this is of no hindrance, but when dealing with a specific theory with a natural ``home sort'', it is interesting (and common) to ask whether types admit canonical bases which are subsets of the model.
This is true, of course, in any stable theory which eliminates imaginaries.
In continuous logic, this is trivially true for Hilbert spaces, it is proved for probability algebras in \cite{BenYaacov:SchroedingersCat}, and for $L_p$ Banach lattices in \cite{BenYaacov-Berenstein-Henson:LpBanachLattices} (so all of these theories have, in particular, weak elimination of imaginaries, even though not full elimination of imaginaries).

A somewhat less commonly asked question is the following.
Can we find, for each formula $\varphi(\bar x,\bar y)$, a formula $d\varphi(\bar y,Z)$, where $Z$ is some infinite tuple of variables of which only finitely (or countably) many actually appear in $d\varphi$, such that for every model $\cM$, and every type $p(\bar x) \in \tS_{\bar x}(M)$,
\begin{gather*}
  d_{p(\bar x)}\varphi(\bar x,\bar y) \quad = \quad d\varphi\bigl( \bar y, \Cb(p) \bigr).
\end{gather*}
The scarcity of references to this question is actually hardly surprising, since, first, the question as stated makes no sense, and, second, the answer is positive for every stable theory.
Indeed, if we consider $\Cb(p)$ to be merely a set which is only known up to inter-definability, as is the common practice, then the expression $d\varphi\bigl( \bar y, \Cb(p) \bigr)$ is meaningless.
We remedy this in the following manner:

\begin{dfn}
  \label{dfn:UniformCB}
  Let $T$ be a stable theory.
  A \emph{uniform definition of types} in the sort of $\bar x$ consists of a family of formulae $\bigl\{ d\varphi(\bar y,Z) \bigr\}
  _{\varphi(\bar x,\bar y) \in \cL}$, where $Z$ is a possibly infinite tuple, such that for each type $p(\bar x)$ over a model $\cM \vDash T$ there exists a tuple $A \subseteq M^{eq}$ in the sort of $Z$ such that for each $\varphi(\bar x,\bar y)$:
    \begin{gather*}
      d_{p(\bar x)}\varphi(\bar x,\bar y)
      \quad = \quad
      d\varphi( \bar y, A).
    \end{gather*}
  If, in addition, this determines the tuple $A$ uniquely for each $p$ then we write $A = \Cb(p)$ and say that the map $p \mapsto \Cb(p)$ is a \emph{uniform canonical base map} (in the sort of $\bar x$), or that the canonical bases $\Cb(p)$ are \emph{uniform} (in $p(\bar x)$).

  To complement the definition, a (non uniform) \emph{canonical base map} is any map $\Cb$ which associates to a type $p$ over a model some tuple $\Cb(p)$ which enumerates a canonical base for $p$.
\end{dfn}

First of all, we observe that every uniform canonical base map is in particular a canonical base map.
Second, any uniform definition of types gives rise naturally to a uniform canonical base map.
Indeed, for each $\varphi$ we let $w_\varphi$ be a variable in the sort of canonical parameters for $d\varphi(\bar y,Z)$, and let $d\varphi'(\bar y,w_\varphi)$ be the corresponding formula.
For a type $p$, let $A$ be a parameter for the original definition, and for each $\varphi$ let $b_\varphi$ be the canonical parameter of $d\varphi(\bar y,A)$, so $d\varphi(\bar y,A) = d\varphi'(\bar y,b_\varphi)$.
Now let $W$ be the tuple consisting of all such $w_\varphi$, so we may re-write $d\varphi'(\bar y,w_\varphi)$ as $d\varphi'(\bar y,W)$, and let $B$ be the tuple consisting of all such $b_\varphi$.
Then $d_{p(\bar x)}\varphi(\bar x,\bar y)
= d\varphi(\bar y,A) = d\varphi'(\bar y,B)$ for all $\varphi$, and in addition this determines $B$ uniquely.
Thus $\Cb(p) = B$ is a uniform canonical base map.

\begin{lem}
  \label{lem:ExistUniformCB}
  Every stable theory admits uniform definitions of types and thus uniform canonical base maps (in every sort).
\end{lem}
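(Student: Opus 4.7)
The plan is to construct a uniform definition of types; the passage to a uniform canonical base map then follows by the construction sketched immediately after \fref{dfn:UniformCB}. Fix a formula $\varphi(\bar x, \bar y)$. Stability provides, for each type $p(\bar x)$ over a model, a definable predicate $d_{p(\bar x)}\varphi(\bar x,\bar y)$ in $\bar y$, whose canonical parameter $\Cb_\varphi(p)$ lives in some $\cL^{eq}$-sort $S_\varphi$ depending only on $\varphi$. The one content-bearing step is to produce a single formula $d\varphi(\bar y, w_\varphi)$ over $\emptyset$, with $w_\varphi$ ranging over $S_\varphi$, such that
\begin{gather*}
  d_{p(\bar x)}\varphi(\bar x,\bar y) \quad = \quad d\varphi\bigl( \bar y, \Cb_\varphi(p) \bigr)
\end{gather*}
for every $p$. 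This is the standard construction of the sort of canonical parameters of a uniformly definable family of definable predicates: in continuous logic one may realise $S_\varphi$ as the quotient of a tuple of ``instance parameters'' by the natural pseudometric given by uniform distance between the induced $\varphi$-definitions.

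Once this is granted I would form the possibly infinite tuple $Z = (w_\varphi)_{\varphi(\bar x,\bar y) \in \cL}$, putting the coordinate $w_\varphi$ in the sort $S_\varphi$, and read $d\varphi(\bar y, Z)$ as the formula above depending only on its $w_\varphi$-coordinate. For each type $p$ over a model $\cM$, the tuple $A = \bigl( \Cb_\varphi(p) \bigr)_\varphi$ then satisfies $d_{p(\bar x)}\varphi(\bar x,\bar y) = d\varphi(\bar y, A)$ for every $\varphi$ by construction, so $\{ d\varphi(\bar y, Z)\}_\varphi$ is a uniform definition of types in the sort of $\bar x$. Each coordinate $\Cb_\varphi(p)$ is moreover uniquely determined by the $\varphi$-definition it parametrises, so $A$ is the unique tuple witnessing this, and $p \mapsto A$ is already a uniform canonical base map.

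The main and essentially only obstacle is the auxiliary fact on the existence of $S_\varphi$ together with the evaluation formula $d\varphi(\bar y, w_\varphi)$; this is the continuous-logic analogue of the classical triviality that the canonical parameter of a formula $\psi(\bar y)$ lives in a sort depending only on $\psi$. The rest is purely formal: the uniqueness clause in \fref{dfn:UniformCB} is automatic from the uniqueness of canonical parameters in their respective sorts, and the conversion from a uniform definition of types to a uniform canonical base map is precisely what the author performed in the paragraph preceding the statement.
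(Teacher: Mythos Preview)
Your proposal is correct and aligns with the paper's own proof, which simply defers to \cite{Pillay:GeometricStability} and \cite{BenYaacov-Usvyatsov:CFO} for the existence of uniform definitions of types; the passage to a uniform canonical base map is then exactly the construction already carried out in the paragraph before the lemma, as you note.

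One clarification is worth making. You correctly flag as ``the one content-bearing step'' the existence of a single sort $S_\varphi$ and evaluation formula $d\varphi(\bar y,w_\varphi)$ working for all $p$, but in your last paragraph you describe this as the analogue of the ``classical triviality that the canonical parameter of a formula $\psi(\bar y)$ lives in a sort depending only on $\psi$''. These are not the same thing: the triviality is that instances $\psi(\bar y,\bar c)$ of a \emph{fixed} $\psi(\bar y,\bar z)$ have canonical parameters in a fixed sort; the substantive theorem (the one being cited) is that in a stable theory all $\varphi$-definitions arise as instances of a single such $\psi$. Your argument uses the latter, not the former, so it is the uniform-definability theorem for stable formulae that is doing the work --- exactly as in the paper.
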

\begin{proof}
  This is shown for classical logic in, say, \cite{Pillay:GeometricStability}, and for continuous logic (which encompasses classical logic as a special case) in \cite{BenYaacov-Usvyatsov:CFO}.
\end{proof}

\begin{lem}
  The image $\img \Cb$ of a uniform canonical base map is a type-definable set.
\end{lem}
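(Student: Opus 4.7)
My plan is to realize $\img \Cb$ as the projection onto the $Z$-sort of a type-definable set obtained by pairing each candidate $A$ with a realisation of the corresponding type, and then apply compactness to convert the existential into a type-definable condition.

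For each formula $\varphi(\bar x,\bar y)$, I set
\[
  \chi_\varphi(\bar x,Z) \;:=\; \sup_{\bar y}\bigl|\varphi(\bar x,\bar y) - d\varphi(\bar y,Z)\bigr|,
\]
which is again a formula since $\bar y$ is finite. The first step is the characterisation: $A \in \img \Cb$ if and only if there exists a tuple $\bar a$ in a sufficiently saturated monster with $\chi_\varphi(\bar a,A) = 0$ for every $\varphi \in \cL$. The forward implication is immediate: if $A = \Cb(p)$ and $\bar a \models p$, then $\varphi(\bar a,\bar b) = d\varphi(\bar b,A)$ for all $\bar b$. The reverse implication uses the uniqueness clause built into Definition~\ref{dfn:UniformCB}: any such $\bar a$ has $\tp(\bar a)$ whose $\varphi$-definitions are exactly $d\varphi(\cdot,A)$, so $\Cb(\tp(\bar a))$ must equal $A$.

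The second step turns this into type-definability. By compactness, the partial type $\{\chi_\varphi(\bar x,A) = 0\}_{\varphi}$ in the variable $\bar x$ is realised in the monster iff it is finitely satisfiable, and for each finite $\Phi \subseteq \cL$ this finite satisfiability is captured by the single condition
\[
  \theta_\Phi(Z) \;:=\; \inf_{\bar x}\,\max_{\varphi \in \Phi} \chi_\varphi(\bar x,Z) \;=\; 0.
\]
Combining the two steps, $\img \Cb$ is the common zero set of $\{\theta_\Phi : \Phi \subseteq \cL \text{ finite}\}$, hence type-definable.

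The only delicate point, and the place where some care is needed, is handling the case in which $\bar x$ is an infinite tuple: then $\inf_{\bar x}$ of a single formula is not \emph{a priori} meaningful. This is resolved by the observation that for any fixed finite $\Phi$ only finitely many coordinates of $\bar x$ appear across the formulas in $\Phi$, so $\theta_\Phi$ is a bona fide formula. Once this is acknowledged, the argument is entirely routine — it is the uniqueness half of Definition~\ref{dfn:UniformCB} that does the real work, compactness that does the packaging.
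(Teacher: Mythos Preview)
There is a genuine gap in your forward implication. You write that if $A=\Cb(p)$ and $\bar a\vDash p$ then $\varphi(\bar a,\bar b)=d\varphi(\bar b,A)$ for all $\bar b$; but $p$ is a type over some model $M$, and this equality is only guaranteed for $\bar b\in M$, whereas your $\chi_\varphi(\bar a,A)=\sup_{\bar y}\bigl|\varphi(\bar a,\bar y)-d\varphi(\bar y,A)\bigr|$ ranges over all $\bar b$ in the ambient model, which contains $\bar a$ itself. A concrete failure: take $T$ the theory of an infinite set, $p$ the generic $1$-type over $M$, $\varphi(x,y)$ the formula $x=y$. Then $d\varphi(y,A)$ is identically false, and $\chi_\varphi(a,A)=0$ would require $\forall y\,(a\neq y)$, which no element of any model satisfies. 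Hence $\theta_{\{\varphi\}}(A)\neq 0$ even though $A\in\img\Cb$.

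The quantifier order is the issue: your $\theta_\Phi$ has $\inf_{\bar x}\sup_{\bar y}$, which asks for a single $\bar a$ working against all $\bar b$ simultaneously. What is actually type-definable, and what the paper's proof invokes, is (approximate) \emph{finite} consistency of the putative type: for each $\varphi$ and each $n$,
\[
\sup_{\bar y_1,\ldots,\bar y_n}\;\inf_{\bar x}\;\max_{i\leq n}\bigl|\varphi(\bar x,\bar y_i)-d\varphi(\bar y_i,A)\bigr|=0,
\]
with the $\sup$ outside the $\inf$. This says that any finite set of constraints imposed by the putative definitions is realisable, which by compactness is exactly consistency, and which is manifestly a type-definable condition on $A$. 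Your packaging via compactness is the right idea, but applied to the wrong formulas.
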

\begin{proof}
  All we need to say is that the tuple of parameters does indeed define a (finitely, or, in the continuous case, approximately finitely) consistent type, which is indeed a type-definable property.
\end{proof}

\begin{lem}
  \label{lem:BijectionUniformCB}
  Let $\Cb$ be a uniform canonical base map in the sort $\bar x$, and let $f$ be definable function (without parameters) defined on $\img \Cb$, into some other possibly infinite sort (this is equivalent to requiring that the graph of $f$ be type-definable).
  Assume furthermore that $f$ is injective.
  Then $\Cb' = f \circ \Cb$ is another uniform canonical base map.
  Moreover, every uniform canonical base map can be obtained from any other in this manner.
\end{lem}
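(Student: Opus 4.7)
The plan is to handle the two claims in turn, the first by transferring the uniform definition of types along $f$, the second by recovering $f$ from a comparison of two uniform canonical base maps.

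For the first claim, fix a family $\{d\varphi(\bar y, Z)\}_\varphi$ witnessing that $\Cb$ is uniform. I would define new formulas $d'\varphi(\bar y, Z')$ by the rule $d'\varphi(\bar y, f(A)) = d\varphi(\bar y, A)$ for $A \in \img \Cb$. For this rule to yield a genuine definable predicate on $f(\img \Cb)$, I invoke the standard principle of continuous model theory that a definable predicate on a type-definable set pushes forward along a definable injection (one whose graph is type-definable) to a definable predicate on the image: injectivity of $f$ guarantees that the rule is well-defined, and type-definability of the graph of $f$ gives that the resulting predicate is definable. Granted this, the identity $d_{p(\bar x)}\varphi(\bar x, \bar y) = d\varphi(\bar y, \Cb(p)) = d'\varphi(\bar y, \Cb'(p))$ is immediate, and the uniqueness of $\Cb'(p)$ follows from the injectivity of $f$ together with the uniqueness of $\Cb(p)$.

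For the second claim, given two uniform canonical base maps $\Cb$ and $\Cb'$ on the same sort, I would set $f: \img \Cb \to \img \Cb'$ to be the parameter-free map sending $\Cb(p)$ to $\Cb'(p)$. It is well-defined and injective because, by the very definition of a uniform canonical base map, either parameter recovers the type $p$ through its associated family of definitions, and thus also determines the other parameter. Type-definability of the graph follows by describing it as the set of pairs $(A, A')$ satisfying $A \in \img \Cb$ and $A' \in \img \Cb'$ (each type-definable by the preceding lemma), together with the infinite system of equalities $d\varphi(\bar b, A) = d'\varphi(\bar b, A')$ ranging over all $\varphi$ and all parameter tuples $\bar b$ in the appropriate sort. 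Each such equality is a closed condition on $(A, A')$, and the resulting infinite conjunction remains type-definable.

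The main obstacle is the push-forward step in the first claim: showing that substitution along $f^{-1}$ produces a genuine definable predicate, rather than a merely pointwise-defined function on $f(\img \Cb)$. This is where injectivity and type-definability of the graph of $f$ are essential. I would either cite this factorization principle from the continuous stability literature directly, or, for a self-contained argument, observe that the inverse relation of $f$ is itself type-definable (as the transpose of the graph), so that composing a definable predicate with it on the type-definable set where the inverse is a function yields another definable predicate.
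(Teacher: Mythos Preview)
Your proposal is correct and follows essentially the same approach as the paper: the main assertion is reduced to the fact that $d\varphi(\bar y, f^{-1}(W))$ is definable on the image of $f$ when $f$ is definable and injective, and the moreover part is handled by observing that the graph of $\Cb(p) \mapsto \Cb'(p)$ is type-definable because the two canonical bases must yield the same definitions. Your write-up is more explicit than the paper's (which disposes of each point in one sentence), but the underlying ideas coincide.
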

\begin{proof}
  The main assertion follows from the fact that if $f$ is definable and injective and $d\varphi(\bar y,Z)$ is a formula then $d\varphi\bigl( \bar y,f^{-1}(W) \bigr)$ is also definable by a formula on the image of $f$.
  For the moreover part, given two uniform canonical base maps $\Cb$ and $\Cb'$, the graph of the map $f\colon \Cb(p) \mapsto \Cb'(p)$ is type-definable (one canonical base has to give rise to the same definitions as the other, and this is a type-definable condition), so $f$ is definable.
\end{proof}

Thus, in the same way that a canonical base for a type is exactly anything which is inter-definable with another canonical base for that type, a uniform canonical base is exactly anything which is uniformly inter-definable with another uniform canonical base.
A consequence of this (and of existence of uniformly canonical bases) is that in results such as the following the choice of uniform canonical bases is of no importance.

\begin{ntn}
  When $\cM$ is a model and $\bar a$ a tuple in some elementary extension, we write $\Cb(\bar a/M)$ for $\Cb(p)$ where $p = \tp(\bar a/M)$.
\end{ntn}

\begin{lem}
  \label{lem:DefinableFunctionUniformCB}
  Let $\bar z = f(\bar x,\bar y)$ be a definable function in $T$ (say without parameters), possibly partial, and let $\Cb$ be uniform.
  Then the map $f^\Cb\bigl( \Cb(\bar a/M), \bar b \bigr)
  = \Cb\bigl( f(\bar a,\bar b) / M \bigl)$ is definable as well for $(\bar a,\bar b) \in \dom f$, $\bar b \in M$, uniformly across all models of $T$.
  In case $f$ is definable with parameters in some set $A$, so is $f^\Cb$, uniformly across all models containing $A$.
\end{lem}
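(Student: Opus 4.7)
The plan is to express, for each formula $\psi(\bar z,\bar y')$, the $\psi$-definition of $\tp(f(\bar a,\bar b)/M)$ as a definable expression in $\Cb(\bar a/M)$ and $\bar b$, and then conclude by the uniqueness clause in \fref{dfn:UniformCB}.

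Fix a formula $\psi(\bar z,\bar y')$. Since $f$ is a definable partial function, the composite
\begin{equation*}
  \varphi_\psi\bigl(\bar x,(\bar y,\bar y')\bigr) \ := \ \psi\bigl(f(\bar x,\bar y),\bar y'\bigr)
\end{equation*}
is a (definable predicate, hence) formula on $\dom f$ times the sort of $\bar y'$. Writing $p := \tp(\bar a/M)$ and $q := \tp(f(\bar a,\bar b)/M)$, substituting $\bar y=\bar b$ and $\bar y'=\bar c\in M$ in the uniform $\varphi_\psi$-definition of $p$ yields
\begin{equation*}
  d\varphi_\psi\bigl((\bar b,\bar c),\Cb(p)\bigr) \ = \ \psi\bigl(f(\bar a,\bar b),\bar c\bigr)^p \ = \ \psi(\bar z,\bar c)^q \ = \ d_q\psi(\bar z,\bar c).
\end{equation*}
Since this holds for every $\bar c\in M$ and both sides are formulas in the free variable $\bar y'$, we obtain the identity of $\psi$-definitions $d_q\psi(\bar z,\bar y') = d\varphi_\psi\bigl((\bar b,\bar y'),\Cb(p)\bigr)$.

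By uniqueness, $\Cb(q)$ is then characterised as the unique $C\in\img\Cb$ satisfying $d\psi(\bar y',C) = d\varphi_\psi\bigl((\bar b,\bar y'),\Cb(p)\bigr)$ for every $\psi$. This is a type-definable condition on $(\Cb(p),\bar b,C)$, so the graph of $f^\Cb$ is type-definable; being total and single-valued on its declared domain, $f^\Cb$ is then definable by the standard criterion in continuous logic. The construction is uniform across models of $T$ because the formulas $\varphi_\psi$, $d\varphi_\psi$, $d\psi$ do not depend on $M$, and if $f$ is defined over $A$ then $\varphi_\psi$ inherits those parameters, so $f^\Cb$ is definable over $A$ as well.

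The main obstacle I anticipate is essentially bookkeeping: one must verify that $\varphi_\psi$ is a legitimate formula on $\dom f$ despite the partiality of $f$, and that the passage from type-definable graph to definable function goes through for functions whose codomain $\img\Cb$ sits in a possibly infinite tuple of (imaginary) sorts. Both are routine in the continuous setting, but they do deserve to be spelled out for the proof to be complete.
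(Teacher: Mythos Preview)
Your proof is correct and follows essentially the same approach as the paper: compose each formula $\psi$ with $f$ to obtain a formula $\varphi_\psi(\bar x,\bar y\bar y') = \psi(f(\bar x,\bar y),\bar y')$ whose uniform definition yields the $\psi$-definition of $\tp(f(\bar a,\bar b)/M)$ from $\Cb(\bar a/M)$ and $\bar b$. The paper's version is terser and leaves the final passage from uniform definitions to the definability of $f^{\Cb}$ implicit, whereas you spell out the type-definable-graph argument explicitly.
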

\begin{proof}
  For the first assertion, it is enough to observe that we can define $\tp\bigl( f(\bar a,\bar b)/M \bigr)$ by
  \begin{gather*}
    \varphi\bigl( f(\bar a,\bar b), \bar c \bigr) \quad = \quad d\psi\bigl( \bar b\bar c, \Cb(\bar a/M) \bigr),
  \end{gather*}
  where $\psi(\bar x,\bar y\bar z)
  = \varphi\bigl( f(\bar x,\bar y), \bar z \bigr)$.
  The case with parameters follows.
\end{proof}

\begin{lem}
  \label{lem:UniformlyContinuousUniformCB}
  Let $\Cb$ be a uniform canonical base map, say on the sort of $n$-tuples, into some infinite sort, and let $\Cb(p)_i$ denote its $i$th coordinate.
  Then the map $\bar a \mapsto \Cb(\bar a/M)_i$ is uniformly continuous, and uniformly so regardless of $\cM$.
\end{lem}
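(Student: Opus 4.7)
The plan is to display an explicit modulus of uniform continuity for $\bar a \mapsto \Cb(\bar a/M)_i$ which depends only on the formula attached to the $i$-th coordinate, and in particular not on $\cM$. By the construction that precedes \fref{lem:ExistUniformCB}, there is a fixed formula $\varphi_i(\bar x,\bar y)$ such that $\Cb(p)_i$ is the canonical parameter of the $\varphi_i$-definition $d_p\varphi_i(\bar y) = d\varphi_i\bigl( \bar y, \Cb(p) \bigr)$. I would first recall from \cite{BenYaacov-Usvyatsov:CFO} that, set up in the standard way, the corresponding imaginary sort carries a bounded metric which, up to a fixed modulus of uniform equivalence, is a suitably normalised version of
\begin{gather*}
  d(w,w') \quad = \quad \sup_{\bar y}\,\bigl| d\varphi_i(\bar y,w) - d\varphi_i(\bar y,w') \bigr|,
\end{gather*}
the supremum being over a fixed bounded range of test tuples $\bar y$.

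Next, to estimate $d\bigl( \Cb(\bar a/M)_i, \Cb(\bar a'/M)_i \bigr)$, I would fix a test tuple $\bar y$ and use the standard nonforking characterisation of the $\varphi_i$-definition: working in a sufficiently saturated extension, pick $\bar y' \equiv_{M\bar a\bar a'} \bar y$ which is nonforking-independent from $\bar a\bar a'$ over $M$. Then $d_{\tp(\bar a/M)}\varphi_i(\bar y) = d_{\tp(\bar a/M)}\varphi_i(\bar y') = \varphi_i(\bar a,\bar y')$, and analogously with $\bar a'$, so that
\begin{gather*}
  \bigl| d_{\tp(\bar a/M)}\varphi_i(\bar y) - d_{\tp(\bar a'/M)}\varphi_i(\bar y) \bigr|
  \quad = \quad
  \bigl| \varphi_i(\bar a,\bar y') - \varphi_i(\bar a',\bar y') \bigr|
  \quad \leq \quad \omega_{\varphi_i}\bigl( d(\bar a,\bar a') \bigr),
\end{gather*}
where $\omega_{\varphi_i}$ is a modulus of uniform continuity of $\varphi_i$ in $\bar x$ on the relevant bounded ball. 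Taking the supremum in $\bar y$ transfers the bound to $d\bigl(\Cb(\bar a/M)_i,\Cb(\bar a'/M)_i\bigr)$, giving uniform continuity with modulus $\omega_{\varphi_i}$, which is independent of $\cM$.

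The main obstacle I expect is the bookkeeping around the canonical-parameter imaginary sort in continuous logic: making precise that its metric is bounded and indeed coincides, up to a fixed modulus of uniform equivalence, with the normalised supremum above, so that the sup-norm inequality really does translate into a metric inequality in that sort. Once that piece of setup from \cite{BenYaacov-Usvyatsov:CFO} is in hand, the remainder is a direct combination of the nonforking characterisation of the $\varphi_i$-definition and of uniform continuity of the formula $\varphi_i$, both of which are standard and model-independent.
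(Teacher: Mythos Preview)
Your argument handles only the particular uniform canonical base map obtained from the construction preceding \fref{lem:ExistUniformCB}, in which each coordinate is literally the canonical parameter of a $\varphi_i$-definition. The lemma, however, is stated for an \emph{arbitrary} uniform canonical base map $\Cb$; nothing guarantees that $\Cb(p)_i$ is a canonical parameter of a $\varphi$-definition (e.g., in \fref{exm:HilbertSpaceCB} the coordinates are projections and inner products). The paper closes this gap by first treating the specific construction and then invoking \fref{lem:BijectionUniformCB}: any other uniform canonical base map is obtained from that one by a definable bijection, and definable functions are uniformly continuous (with moduli independent of the model). You should add this reduction step.

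For the special case itself, your argument is correct but takes an unnecessary detour through nonforking. Since the supremum defining the canonical-parameter metric may be taken over $\bar y$ ranging in the model $\cM$ (the formula $d\varphi_i(\bar y,w)$ has no hidden parameters and $\cM$ is a model), one has directly, for $\bar b \in M$,
\begin{gather*}
  d_{\tp(\bar a/M)}\varphi_i(\bar b) = \varphi_i(\bar a,\bar b),
  \qquad
  d_{\tp(\bar a'/M)}\varphi_i(\bar b) = \varphi_i(\bar a',\bar b),
\end{gather*}
so the difference is bounded by $\omega_{\varphi_i}\bigl(d(\bar a,\bar a')\bigr)$ without invoking an independent $\bar y'$. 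This is exactly the paper's one-line ``formulae are uniformly continuous''. Your nonforking manoeuvre is only needed if one insists on testing at $\bar y$ outside $M$, which is not required here.
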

\begin{proof}
  For a uniform canonical base map constructed from a uniform definition as discussed before \fref{lem:ExistUniformCB} this follows from the fact that formulae are uniformly continuous.
  The general case follows using \fref{lem:BijectionUniformCB} and the fact that definable functions are uniformly continuous.
\end{proof}

\begin{rmk}
  The notion of a uniform canonical base map can be extended to simple theories, and the same results hold.
  Of course, canonical bases should then be taken in the sense of Hart, Kim and Pillay \cite{Hart-Kim-Pillay:CoordinatisationAndCanonicalBases}, and one has to pay the usual price of working with hyper-imaginary sorts.
\end{rmk}

Now the question we asked earlier becomes
\begin{qst}
  Let $T$ be a stable theory.
  Find a \emph{natural} uniform canonical base map for $T$.
  In particular, one may want the image to be in the home sort, or in a restricted family of imaginary sorts.
\end{qst}

Usually we shall aim for the image to lie in the home sort, plus the sort $\{T,F\}$ in the case of classical logic, or $[0,1]$ in the case of continuous logic.

\begin{exm}
  \label{exm:HilbertSpaceCB}
  Let $T = IHS$, the theory of infinite dimensional Hilbert spaces, or rather, of unit balls thereof (from now on we shall tacitly identify Banach space structures with their unit balls).

  The ``folklore'' canonical base for a type $p = \tp(\bar v/E)$ is the orthogonal projection $P_E(\bar v)$.
  Indeed, by quantifier elimination it will be enough to show that $p(\bar x)$ admits a $\varphi$-definition over $P_E(\bar v)$ for every formula of the form $\varphi(\bar x,y) = \| \sum \lambda_i x_i + y \|^2$, for any choice of scalars $\bar \lambda$.
  We then observe that for all $u \in E$ we have
  \begin{gather*}
    \varphi(\bar v,u) = \left\| \sum \lambda_i v_i \right\|^2 - \left\| \sum \lambda_i P_E(v_i) \right\|^2 + \left\| \sum \lambda_i P_E(v_i) + u \right\|^2.
  \end{gather*}
  The second and third terms are definable from $u$ using $P_E(\bar v)$ as parameters, while the first term is simply a constant which does not depend on $u$.

  Since $P_E(\bar v)$ alone does not allow us to recover $\| \sum \lambda_i v_i \|^2$, and therefore does not allow us to recover $p$ either, this canonical base is not uniform.
  By adding the missing information (namely, the inner product on the $v_i$, in the sort $[-1,1]$) we obtain a uniform canonical base:
  \begin{gather*}
    \Cb(\bar v/E) =
    \bigl( P_E(v_i), \langle v_i,v_j \rangle \bigr)_{i,j < n}.
  \end{gather*}
\end{exm}

This example, where we take a canonical base which is not uniform and make it uniform merely by adding information in a \emph{constant sort} (namely, $\{T,F\}$ in classical logic, or $[0,1]$ in continuous logic) is a special case of the following.

\begin{dfn}
  Say that a canonical base map is \emph{weakly uniform} if it can be obtained from a uniform map by composition with a definable function (which need not necessarily be injective, so the resulting canonical base need not suffice to recover the type uniformly -- compare with \fref{lem:BijectionUniformCB}).
\end{dfn}

For example, in the case of Hilbert spaces discussed above, the canonical base map $\tp(\bar v/E) \mapsto P_E(\bar v)$ is weakly uniform.

\begin{prp}
  \label{prp:WeaklyUniformCB}
  Let $\Cb$ be any uniform canonical base map, and let us write its target sort as $Z_0 \times Z_1$, where $Z_1$ is a power of the constant sort.
  Let $\Cb_0$ be the restriction to the sort $Z_0$.
  Then $\Cb_0$ is a weakly uniform canonical base map.
  Conversely, every weakly uniform canonical base map can be obtained in this fashion.
\end{prp}
\begin{proof}
  The main assertion is quite immediate, and it is the converse which we need to prove.
  Let $\Cb_0$ be a weakly uniform canonical base map on a sort $\bar x$, with target sort $Z_0$.
  By definition, it is of the form $f \circ \Cb'$, where $\Cb'$ is a uniform canonical base map with target sort $W$ and $f\colon \img \Cb' \to Z_0$ is definable.
  Let $\Phi$ be the set of all formulae $\varphi(\bar x,W)$.
  For every such formula, the value $\varphi\bigl( \bar a, \Cb'(\bar a/M) \bigr)$ is uniformly definable from $\Cb'(\bar a/M)$, call it $g_\varphi\bigl( \Cb'(\bar a/M) \bigr)$, and let $g = (g_\varphi)_{\varphi \in \Phi}$.
  Then $(f,g) \colon \img \Cb' \to Z_0 \times Z_1$ is definable, and $Z_1$ is a power of the constant sort.
  If we show that $(f,g)$ is injective then, by \fref{lem:BijectionUniformCB}, we may conclude that $\Cb = (f,g) \circ \Cb'$ is the desired uniform canonical base map.

  So let us consider a model $\cM$ and two tuples $\bar a$ and $\bar b$ in the sort $\bar x$, lying in some extension $\cN \succeq \cM$.
  Let $C = \Cb'(\bar a/M)$, $D = \Cb'(\bar b/M)$, and assume that $(f,g)(C) = (f,g)(D)$.
  Then $g(C) = g(D)$ means that $\bar a C \equiv \bar b D$.
  Since also $f(C) = f(D)$, we have $\bar a f(C) \equiv \bar b f(C)$, i.e., $\bar a \equiv_{f(C)} \bar b$.
  Finally, by hypothesis, $f(C)$ is a canonical base for both types, whence $\bar a \equiv_M \bar b$
  and therefore $C = D$.
  This completes the proof.
\end{proof}

Thus our question can be restated as
\begin{qst}
  Let $T$ be a stable theory.
  Find a natural weakly uniform canonical base map for $T$ with image in the home sort.
\end{qst}

Unfortunately, the canonical bases mentioned above for probability algebras and $L_p$ lattices are not even weakly uniform, so we cannot apply \fref{prp:WeaklyUniformCB} and the problem of finding uniform canonical bases requires some new ideas.

\section{Uniform canonical bases in atomless probability spaces}
\label{sec:RV}

The easier of the two ``interesting cases'' is that of atomless probability algebras.
There is a caveat, though, namely a uniform canonical base exists not in the home sort of events (see \fref{cor:PrNoUniformCanonicalBase}), but in the (still mathematically natural) imaginary sort of $[0,1]$-valued random variables.

In \cite{BenYaacov:RandomVariables} we define the theory $ARV$ of atomless spaces of $[0,1]$-valued random variables, in the language $\{0,\neg,\half,\vee,\wedge\}$, where $0$ is a constant, $\neg X = 1-X$ and $\half X$ are unary, the lattice operations $\vee$ and $\wedge$ are binary, and integration is recovered as $\bE[X] = d(X,0)$.
We show there that $ARV$ is bi-interpretable with $APr$, the theory of atomless probability algebra: to a probability algebra $\sA$ one associates the space $\cM = L^1(\sA,[0,1])$ of $[0,1]$-valued $\sA$-measurable random variables, from which one can recover $\sA = \sigma(M)$.
In addition, $ARV$ is $\aleph_0$-stable, eliminates quantifiers, and admits definable continuous calculus: if $\tau\colon [0,1]^n \to [0,1]$ is any continuous function then the map $\bar X \mapsto \tau(\bar X)$ is definable.
Thus, in order to construct uniform canonical bases for $APr$ in the sort of $[0,1]$-valued random variables, it will be enough (and in a sense, better) to show that $ARV$ admits uniform canonical bases in the home sort.

For models $\cM \preceq \cN \vDash ARV$ and $X \in N$ let us write $\bE[X|M]$ for the conditional expectation $\bE[X|\sigma(M)]$, itself a member of $M$.
For tuples $\bar X \in N^n$ and $\bar k \in \bN^n$, let $\bar X^{\bar k} = \prod X_i^{k_i}$.

\begin{lem}
  \label{lem:RVCondExp}
  Let $\cM \preceq \cN \vDash ARV$ and let $\bar X \in N^n$, $\bar k \in \bN^n$.
  Then $\bE[\bar X^{\bar k}|M]$ is uniformly definable from $\Cb(\bar X/M)$.
  Also, for every $\bar \ell \in \bN^m$, $\bE[ \bar X^{\bar k} \bar Y^{\bar \ell} ]$ is uniformly definable from $\bE[\bar X^{\bar k} | M]$ and $\bar Y \in M^m$.
\end{lem}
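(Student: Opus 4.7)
The plan is to exploit the defining identity of conditional expectation: writing $U := \bE[\bar X^{\bar k}|M] \in M$, one has
\[
\bE[\bar X^{\bar k} \cdot Z] \;=\; \bE[U \cdot Z] \qquad \text{for every } Z \in M.
\]

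For the \emph{second assertion} I would set $Z = \bar Y^{\bar \ell} \in M$, whence $\bE[\bar X^{\bar k}\bar Y^{\bar \ell}] = \bE[U \cdot \bar Y^{\bar \ell}]$; the right-hand side is a polynomial expression in $U$ and $\bar Y$ under the expectation, hence a formula in $U$ and $\bar Y$ by the definable continuous calculus available in $ARV$. This is the easy, almost tautological direction.

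For the \emph{first assertion}, I would observe that $\bar Y \mapsto \bE[\bar X^{\bar k}\bar Y^{\bar \ell}]$ is precisely the $\varphi$-definition of $\tp(\bar X/M)$ for the formula $\varphi(\bar X, \bar Y) = \bE[\bar X^{\bar k}\bar Y^{\bar \ell}]$, so uniformity of $\Cb$ makes it a uniformly definable predicate in $\bar Y$ and $\Cb(\bar X/M)$. Combined with the identity above, this yields $\bE[U \cdot \bar Y^{\bar \ell}]$ uniformly definable from $\Cb$ and $\bar Y \in M^m$ for every $\bar \ell \in \bN^m$. To pass from knowledge of these inner products to $U$ itself, I would invoke the $L^2$-projection characterisation: $U$ is the unique element of $M$ minimising
\[
\psi(V) \;=\; \bE\bigl[(\bar X^{\bar k} - V)^2\bigr]
\;=\; \bE[\bar X^{2\bar k}] \;-\; 2\,\bE[\bar X^{\bar k} V] \;+\; \bE[V^2].
\]
Each summand is uniformly definable in $(V,\Cb)$: the first is the value of the $\varphi$-definition for $\varphi(\bar X) = \bE[\bar X^{2\bar k}]$ at the empty tuple, the second is the case $\bar \ell = (1)$, $\bar Y = V$ of the preceding remark, and the third is a formula in $V$ alone. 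So $\psi$ itself is uniformly definable, and by the Pythagorean identity for the $L^2$-projection,
\[
\bE[(V - U)^2] \;=\; \psi(V) \;-\; \inf_{V'} \psi(V'),
\]
a uniformly definable predicate in $(V,\Cb)$ vanishing exactly at $V = U$; its unique zero then delivers $U$ as a uniformly definable function of $\Cb$.

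The one non-bookkeeping step is this last one: promoting the uniformly definable convex quadratic with unique minimiser to uniform definability of the minimiser itself. Everything else is iterated expectation combined with the definable continuous calculus of $ARV$.
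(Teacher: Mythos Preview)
Your proposal is correct and follows essentially the same route as the paper: both use the $L^2$-projection characterisation of conditional expectation, observe that the predicate $V \mapsto \bE[(\bar X^{\bar k} - V)^2]$ is a formula and hence uniformly definable from $\Cb(\bar X/M)$, and conclude that its unique minimiser is a definable function of $\Cb$ (the paper phrases this as ``the graph is type-definable, hence the function is definable''). Your expansion of the quadratic into three moment terms is a harmless elaboration --- the paper simply treats $|\bar X^{\bar k} - Y|^2$ directly as a definable map via the continuous calculus and invokes \fref{lem:DefinableFunctionUniformCB} --- and the second assertion is handled identically in both.
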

\begin{proof}
  By the definable continuous calculus, the function $(\bar X,Y) \mapsto |\bar X^{\bar k} - Y|^2$ is uniformly definable, and by \fref{lem:DefinableFunctionUniformCB} the predicate $\| \bar X^{\bar k} - y\|_2$ is uniformly definable for $y \in M$ from $\Cb(\bar X/M)$.
  For $Y \in M$ we have
  \begin{gather*}
    Y = \bE[\bar X^{\bar k}|M]
    \qquad \Longleftrightarrow \qquad
    \| \bar X^{\bar k} - Y\|_2
    =
    \inf_y \| \bar X^{\bar k} - y\|_2,
  \end{gather*}
  where the infimum is taken in $\cM$.
  Thus the graph of the function $\Cb(\bar X/M) \mapsto \bE[\bar X^{\bar k}|M]$ is type-definable in $\cM$, whence it follows that the function itself is definable, and uniformly so in all models of $ARV$.
  For the second assertion, use definable continuous calculus and the fact that for $\bar Y \in M^m$ we have $\bE[ \bar X^{\bar k} \bar Y^{\bar \ell} ] = \bE\bigl[ \bE[ \bar X^{\bar k} | M ] \cdot \bar Y^{\bar \ell} \bigr]$.
\end{proof}

\begin{thm}
  \label{thm:ARVUniformCanonicalBase}
  For $n$-types over models in $ARV$,
  \begin{gather*}
    \Cb(\bar X/M) = \bigl( \bE[\bar X^k|M] \bigr)_{\bar k \in \bN^n}
  \end{gather*}
  is a uniform canonical base (in the home sort).
\end{thm}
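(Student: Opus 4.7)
The plan is to apply \fref{lem:BijectionUniformCB} to an existing uniform canonical base map $\Cb'$ on the sort of $n$-tuples (whose existence is granted by \fref{lem:ExistUniformCB}). Namely, I would define the parameter-free map
\begin{equation*}
  f\colon \Cb'(\bar X/M) \mapsto \bigl( \bE[\bar X^{\bar k}|M] \bigr)_{\bar k \in \bN^n},
\end{equation*}
which is definable coordinate-wise by the first assertion of \fref{lem:RVCondExp}. Once $f$ is verified to be injective on $\img \Cb'$, \fref{lem:BijectionUniformCB} will yield the uniform canonical base map $f \circ \Cb'$, which by construction sends each type $p = \tp(\bar X/M)$ to $\bigl(\bE[\bar X^{\bar k}|M]\bigr)_{\bar k \in \bN^n}$, matching the formula in the statement.

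For injectivity, I would show that two $n$-types over $\cM$ with identical conditional moments $\bigl(\bE[\bar X^{\bar k}|M]\bigr)_{\bar k}$ must coincide. By quantifier elimination for $ARV$ together with the definable continuous calculus, every formula $\varphi(\bar X, \bar Y)$ is a continuous combination of values of the form $\bE[\sigma(\bar X, \bar Y)]$ for continuous $\sigma\colon [0,1]^{n+m} \to [0,1]$, so it suffices to show that each such value is determined by $\bigl(\bE[\bar X^{\bar k}|M]\bigr)_{\bar k}$ and $\bar Y \in M$. By the Stone--Weierstrass theorem $\sigma$ is uniformly approximated on $[0,1]^{n+m}$ by polynomials, and using the trivial estimate $\bigl| \bE[\sigma_n(\bar X,\bar Y)] - \bE[\sigma(\bar X,\bar Y)] \bigr| \le \| \sigma_n - \sigma \|_\infty$, the value $\bE[\sigma(\bar X, \bar Y)]$ is a uniform limit of finite sums $\sum_i c_i \bE[\bar X^{\bar k_i}\bar Y^{\bar \ell_i}]$. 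By the second assertion of \fref{lem:RVCondExp}, each summand is determined by $\bE[\bar X^{\bar k_i}|M]$ and $\bar Y$, and hence so is the uniform limit.

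The main technical point is the Stone--Weierstrass approximation together with the bounded-linearity estimate for expectation, which bridges polynomial approximants (handled by \fref{lem:RVCondExp}) and general continuous $\sigma$. Once this is in place, both the coordinate-wise definability and the injectivity of $f$ follow immediately from \fref{lem:RVCondExp}, and the theorem is obtained by invoking \fref{lem:BijectionUniformCB}. I do not anticipate a deeper obstacle.
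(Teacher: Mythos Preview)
Your proposal is correct and follows essentially the same approach as the paper: use \fref{lem:RVCondExp} for definability of the map from any uniform canonical base, then establish injectivity via Stone--Weierstra\ss{} approximation by polynomials together with the second part of \fref{lem:RVCondExp} and quantifier elimination, and conclude by \fref{lem:BijectionUniformCB}. The only difference is that you spell out the approximation estimate $\bigl|\bE[\sigma_n(\bar X,\bar Y)] - \bE[\sigma(\bar X,\bar Y)]\bigr| \le \|\sigma_n - \sigma\|_\infty$ explicitly, which the paper leaves implicit.
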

\begin{proof}
  By \fref{lem:RVCondExp}, this tuple is uniformly definable from any other uniform canonical base, so by \fref{lem:BijectionUniformCB} all that is left to show is that this tuple determines the type.
  By \fref{lem:RVCondExp} again, this tuple determines $\bE[ \bar X^{\bar k}\bar Y^{\bar \ell} ]$ for every $\bar Y \in M^m$ and $\bar \ell \in \bN^m$.
  By the Stone-Weierstaß Theorem, every continuous function $\tau\colon [0,1]^{n + m} \rightarrow [0,1]$ is uniformly approximated by polynomials, so $\bE[ \tau(\bar X,\bar Y)]$ is determined for every such $\tau$ and $\bar Y$, which, by quantifier elimination, is enough to determine $\tp(\bar X/M)$.
\end{proof}

\begin{cor}
  \label{cor:APrUniformCanonicalBase}
  For $n$-types over models in $APr$,
  \begin{gather*}
    \Cb(\bar A/\sA) = \left( \bP\left[\left. \bigwedge_{i \in s} A_i \right| \sA \right] \right)_{\emptyset \neq s \subseteq n}
  \end{gather*}
  is a uniform canonical base (in the sort of $[0,1]$-valued random variables), where $\bP[\cdot|\sA]$ denotes conditional probability.
\end{cor}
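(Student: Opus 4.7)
The plan is to deduce this directly from \fref{thm:ARVUniformCanonicalBase} by exploiting the bi-interpretation of $APr$ with $ARV$. The interpretation sends an event $A$ in a probability algebra $\sA$ to its indicator $\mathbf{1}_A \in L^1(\sA,[0,1])$, and the reverse interpretation recovers $\sA$ as $\sigma(M)$ from a model $\cM \vDash ARV$. Types in $APr$ are thus in definable bijection with the corresponding types in $ARV$ of tuples of $\{0,1\}$-valued random variables. So by \fref{lem:BijectionUniformCB}, if $\cM$ corresponds to $\sA$ and $\bar X = (\mathbf{1}_{A_i})_{i < n}$, then $\Cb(\bar A/\sA)$ is uniformly inter-definable with $\Cb(\bar X/M)$, and it suffices to identify the tuple from \fref{thm:ARVUniformCanonicalBase} in this special case.

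By \fref{thm:ARVUniformCanonicalBase}, $\Cb(\bar X/M) = \bigl( \bE[\bar X^{\bar k}|M] \bigr)_{\bar k \in \bN^n}$ is a uniform canonical base. I would then observe that because each $X_i$ is $\{0,1\}$-valued one has $X_i^{k_i} = X_i$ for $k_i \geq 1$ and $X_i^0 = 1$, so $\bar X^{\bar k} = \prod_{i \in s(\bar k)} \mathbf{1}_{A_i} = \mathbf{1}_{\bigcap_{i \in s(\bar k)} A_i}$ where $s(\bar k) = \{i < n : k_i \geq 1\}$. Hence
\begin{gather*}
  \bE[\bar X^{\bar k}|M] \quad = \quad \bP\left[\left. \bigwedge_{i \in s(\bar k)} A_i \right| \sA \right] \text{ if } s(\bar k) \neq \emptyset,
\end{gather*}
and the entry for $\bar k = \bar 0$ is the constant $1$. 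Thus the $\bN^n$-indexed tuple of \fref{thm:ARVUniformCanonicalBase} is obtained from the finite tuple $\bigl( \bP[\bigwedge_{i \in s} A_i | \sA] \bigr)_{\emptyset \neq s \subseteq n}$ by a tautological reindexing (repetitions according to the support map $\bar k \mapsto s(\bar k)$, plus a constant coordinate), and conversely the finite tuple is a definable subtuple of the infinite one.

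This uniform inter-definability, combined with one more application of \fref{lem:BijectionUniformCB}, yields that the stated tuple is itself a uniform canonical base map, now with image in the sort of $[0,1]$-valued random variables. There is no real obstacle here: the only mildly delicate point is making sure the transfer via the bi-interpretation is really uniform in $\sA$ (so that the inverse of the interpretation is a definable injection on the image of $\Cb$), and this is immediate since both directions of the bi-interpretation established in \cite{BenYaacov:RandomVariables} are given by formulae without parameters.
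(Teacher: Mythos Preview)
Your argument is correct and is precisely the intended one: the paper's own proof is the single word ``Immediate,'' and what you have written is exactly the unpacking of that word via the bi-interpretation and the observation that indicators satisfy $X_i^{k_i}=X_i$ for $k_i\geq 1$, so the infinite tuple of \fref{thm:ARVUniformCanonicalBase} collapses, up to repetitions and a constant coordinate, to the finite tuple stated. The appeal to \fref{lem:BijectionUniformCB} for the reindexing is the right justification.
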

\begin{proof}
  Immediate.
\end{proof}

Let us now show that this cannot be improved.

\begin{lem}
  \label{lem:NoInjectiveRVToPr}
  There is \emph{no} $\emptyset$-definable injective map from a model $\cM \vDash ARV$ to $\sigma(M)^\kappa \times [0,1]^\lambda$ (for any cardinals $\kappa$, $\lambda$).
\end{lem}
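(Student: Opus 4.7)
The plan is to show that the component $g\colon M \to \sigma(M)^\kappa$ of any supposedly injective $\emptyset$-definable $f = (g, h) \colon M \to \sigma(M)^\kappa \times [0,1]^\lambda$ is constant, so that $f$ factors through $X \mapsto \tp(X/\emptyset)$; this contradicts injectivity, since atomless probability algebras contain distinct elements of a common type.

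First I would reduce to analysing the coordinates of $g$. Since $h\colon M \to [0,1]^\lambda$ lands in a power of the constant sort, $h(X)$ is determined by $\tp(X/\emptyset)$; hence injectivity of $f$ forces $g(X) \neq g(X')$ for any distinct $X, X'$ sharing a common $\emptyset$-type, and it suffices to show that each coordinate $g_\alpha\colon M \to \sigma(M) \subseteq M$ is constant, equal to $\bot$ or $\top$.

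The crucial step is the constancy argument for $g_\alpha$. By quantifier elimination in $ARV$ together with the definable continuous calculus, a $\emptyset$-definable function $M \to M$ is (up to uniform approximation) a continuous pointwise operation on $X$ modulated by $\emptyset$-definable type-level parameters: $g_\alpha(X) = \tau_\alpha(X, \bar c_\alpha(\tp(X)))$ for some jointly continuous $\tau_\alpha$ and continuous $\bar c_\alpha$. Take $X$ with uniform distribution on $[0,1]$, as realised in any atomless model. For $g_\alpha(X)$ to lie in $\sigma(M)$, i.e.\ to be $\{0,1\}$-valued a.s., the continuous map $t \mapsto \tau_\alpha(t, \bar c_\alpha(\tp_{\text{unif}}))$ must take values in $\{0,1\}$ on all of $[0,1]$; by connectedness of $[0,1]$ and discreteness of $\{0,1\}$, it is identically $0$ or identically $1$. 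Continuity in the type parameter then propagates this value across all of $S_1(\emptyset)$, giving $g_\alpha \equiv \bot$ or $g_\alpha \equiv \top$ on $M$.

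To conclude, pick any two distinct events $A \neq A'$ of measure $\tfrac12$ in $\sigma(M)$, which exist by atomlessness. Their characteristic functions $\chi_A, \chi_{A'} \in M$ are distinct but share a common $\emptyset$-type (Bernoulli$(\tfrac12)$), so $h(\chi_A) = h(\chi_{A'})$, while $g(\chi_A) = g(\chi_{A'})$ by the constancy just established; hence $f(\chi_A) = f(\chi_{A'})$, contradicting injectivity.

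The main obstacle is the structural description of $\emptyset$-definable functions $M \to M$ in $ARV$ used in the key step --- in particular, ruling out uniformly continuous ``thresholding'' maps $M \to \sigma(M)$. This rests on quantifier elimination together with the fact that the language operations $\{0,\neg,\tfrac12,\vee,\wedge\}$ act pointwise on random variables, so that $\{0,1\}$-valued outputs of a uniformly continuous definable map on a generic $X$ are forced to be constant by connectedness.
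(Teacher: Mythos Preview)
Your overall plan is sound and close in spirit to the paper's, but the ``crucial step'' has a genuine gap: the structural claim that every $\emptyset$-definable function $M \to M$ in $ARV$ is of the form $g_\alpha(X) = \tau_\alpha\bigl(X,\bar c_\alpha(\tp X)\bigr)$ with $\tau_\alpha$ \emph{continuous} is not a consequence of quantifier elimination plus the definable continuous calculus. Those two facts run in the opposite direction: they tell you that continuous pointwise operations are definable and that formulae are continuous combinations of moments $\bE[\tau(\bar X)]$. They do \emph{not} tell you that a definable function cannot select a discontinuous element of $\dcl(X)$. And $\dcl(X)$ is all of $L^1\bigl(\sigma(X),[0,1]\bigr)$, so for uniform $X$ it certainly contains, e.g., $\bone_{X \le 1/2}$. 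Whether a uniformly continuous, $\emptyset$-definable $g$ can have $g(X) = \bone_{X \le 1/2}$ for uniform $X$ is exactly the question at hand, and your justification (``the language operations act pointwise'') does not settle it. The subsequent propagation ``by continuity in the type parameter'' across $\tS_1(\emptyset)$ is also unsupported: for $X$ with two-point support (Bernoulli), the pointwise function is only constrained on $\{0,1\}$, so there is no connectedness argument available there, and you have not explained why the value obtained for the uniform type must persist.

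The paper's proof avoids the structural question entirely by arguing along a concrete path rather than over all of $M$. Fix $A$ of measure one half and consider $X_r = r\,\bone_A$ for $r \in [0,1]$. For every $r$, the event $f_i(X_r)$ lies in $\dcl(X_r) \cap \sigma(M) \subseteq \sigma(A) = \{0,1,A,\neg A\}$, a \emph{discrete} four-element set. Since $r \mapsto X_r$ is continuous and $f_i$ is (uniformly) continuous, $r \mapsto f_i(X_r)$ is a continuous map into a discrete set, hence constant; in particular $f(\bone_A) = f(0)$. The same gives $f(\bone_B) = f(0)$ for any other $B$ of measure one half, and since $\bone_A \equiv \bone_B$ the $[0,1]^\lambda$-component agrees as well, contradicting injectivity. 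This uses only that events definable from $X_r$ lie in the finite algebra $\sigma(A)$ and that definable maps are continuous --- no description of arbitrary definable functions is needed. If you want to salvage your argument, replace the structural claim by exactly this path trick; attempting to prove global constancy of $g_\alpha$ is strictly harder than what the lemma requires.
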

\begin{proof}
  Assume there were one, say $(f,g)\colon M \rightarrow \sigma(M)^\kappa \times [0,1]^\lambda$, where $f$ goes to $\sigma(M)^\kappa$ and $g$ to $[0,1]^\lambda$.
  Let $A \in \sigma(M)$ have measure one half, with indicator function $\bone_A$, and for $0 \leq r \leq 1$ let $X_r = r  \bone_A \in M$.
  For $i < \kappa$, the event $f_i(X_r)$ must be definable from $X_r$, namely from $A$, and therefore must belong to the discrete set $\{0,1,A,\neg A\}$.
  Since the map $f_i$, being definable, must be continuous, we conclude that $r \mapsto f_i(X_r)$ must be constant, so $f(\bone_A) = f(0)$.

  Now let $B \in \sigma(M)$ be any other event of measure one half.
  Then $f(\bone_A) = f(0) = f(\bone_B)$, and by quantifier elimination $\bone_A \equiv \bone_B$, whereby $g(\bone_A) = g(\bone_B)$, so $(f,g)$ is not injective after all.
\end{proof}

\begin{cor}
  \label{cor:PrNoUniformCanonicalBase}
  The theory $APr$ does not admit a weakly uniform canonical base in its home sort (of events).
\end{cor}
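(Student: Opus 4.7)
The plan is to combine \fref{prp:WeaklyUniformCB} and the uniform canonical base of \fref{cor:APrUniformCanonicalBase} with the bi-interpretation between $APr$ and $ARV$ recalled in \fref{sec:RV}, in order to produce a map that contradicts \fref{lem:NoInjectiveRVToPr}.

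Suppose, toward a contradiction, that $APr$ admits a weakly uniform canonical base in its home sort, specialised, say, to $1$-types of events. By \fref{prp:WeaklyUniformCB} this extends to a uniform canonical base $\Cb$ for such types whose target sort is $\sA^\kappa \times [0,1]^\lambda$, where $\sA$ denotes the home sort of events. On the other hand, \fref{cor:APrUniformCanonicalBase} at $n = 1$ supplies the uniform canonical base $\Cb'(A/\sA) = \bP[A|\sA]$, whose image lies in the imaginary sort $L^1(\sA,[0,1])$ of $[0,1]$-valued random variables. By \fref{lem:BijectionUniformCB} the two are related by a $\emptyset$-definable bijection $\img \Cb' \leftrightarrow \img \Cb$. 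Moreover, every $f \in L^1(\sA,[0,1])$ is realised as $\bP[A|\sA]$ for some event $A$ in an extension: adjoining a $[0,1]$-uniform random variable $U$ independent of $\sA$ and setting $A = \{U \leq f\}$ works. Thus $\img \Cb' = L^1(\sA,[0,1])$, and composing with the bijection yields a $\emptyset$-definable injection
\begin{gather*}
  L^1(\sA,[0,1]) \longrightarrow \sA^\kappa \times [0,1]^\lambda.
\end{gather*}

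It remains to transport this injection through the bi-interpretation to $ARV$: there $L^1(\sA,[0,1])$ is exactly the home sort $M$ of the corresponding model $\cM \vDash ARV$, while $\sA$ appears as the definable subset $\sigma(M)$ of indicator functions. The outcome is a $\emptyset$-definable injective map $\cM \to \sigma(M)^\kappa \times [0,1]^\lambda$, in direct contradiction with \fref{lem:NoInjectiveRVToPr}. The main point requiring care is simply that every ingredient above --- the uniform canonical bases, the bijection from \fref{lem:BijectionUniformCB}, and the bi-interpretation --- is $\emptyset$-definable, so that what crosses over to $ARV$ is of precisely the form ruled out by \fref{lem:NoInjectiveRVToPr}.
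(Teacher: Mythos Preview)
Your proof is correct and follows essentially the same route as the paper: both assume a weakly uniform canonical base in the event sort, upgrade it via \fref{prp:WeaklyUniformCB} to a uniform one with target $\sA^\kappa \times [0,1]^\lambda$, compare it with the uniform canonical base $\bP[A|\sA]$ of \fref{cor:APrUniformCanonicalBase} using \fref{lem:BijectionUniformCB}, observe that $\img \bP[\cdot|\sA] = L^1(\sA,[0,1])$ via the subgraph/uniform-variable construction, and then invoke the bi-interpretation with $ARV$ to contradict \fref{lem:NoInjectiveRVToPr}. The only cosmetic difference is that the paper names the explicit extension $\sB = \sA(\Omega \times [0,1])$ and the event $B_X = \{(\omega,r)\colon r \leq X(\omega)\}$, which is exactly your $\{U \leq f\}$.
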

\begin{proof}
  Let $\sA \vDash APr$, say the probability algebra of $\Omega$, and let $\sB$ be the probability algebra of $\Omega \times [0,1]$.
  Then $\sB \vDash APr$ as well, and identifying $A \in \sA$ with $A \times [0,1]$, we obtain an embedding $\sA \subseteq \sB$ which is elementary by quantifier elimination of $APr$.
  More generally $X \in L^1(\sA,[0,1])$, let $B_X = \{(\omega,r)\colon r \leq X(\omega)\} \in \sB$ (which is well defined, up to null measure).
  Thus, with $\Cb$ a uniform canonical base map as in \fref{cor:APrUniformCanonicalBase} (for types of singletons), we have $\Cb(B_X/\sA) = \bP[B_X|\sA] = X$, so $\img \Cb = L^1(\sA,[0,1])$.

  Now, assume there existed a weakly uniform canonical base map for types of singletons in $APr$ with values in the home sort.
  Then by \fref{prp:WeaklyUniformCB} there would exist a uniform canonical base map with values in the home sort and $[0,1]$, and by \fref{lem:BijectionUniformCB} it is of the form $f \circ \Cb$, where $f\colon \img \Cb \rightarrow \sA^\kappa \times [0,1]^\lambda$ is definable and injective.
  This (together with the bi-interpretability of $\sA$ with $L^1(\sA,[0,1])$) contradicts \fref{lem:NoInjectiveRVToPr}.
\end{proof}

\section{Uniform canonical bases in atomless $L_p$ lattices}
\label{sec:Lp}

Recall that $LpL$ denotes the theory of $L_p$ lattices for some fixed $p \in [1,\infty)$, and that $ALpL$ denotes the theory of atomless ones: their models are exactly those abstract Banach lattices which are isomorphic to a Banach lattice of the form $L_p(\Omega)$, where $\Omega$ is a measure space, atomless in the case of $ALpL$.
We may write the measure space more explicitly as $(\Omega,\Sigma,\mu)$, where $\Omega$ is a set of points, $\Sigma$ a $\sigma$-algebra on $\Omega$ and $\mu$ a $\sigma$-additive measure on $\Sigma$, atomless in the case of $ALpL$.

Stability, independence and related notions were studied for $ALpL$ by Berenstein, Henson and the author in \cite{BenYaacov-Berenstein-Henson:LpBanachLattices}.
Here we shall consider Banach lattices (as we do all Banach space structures) via their unit balls (from which the entire Banach lattice can be recovered), in the language $\{0,-,\half[x+y],|\cdot|\}$, where, for æsthetic reasons, we halve the distance: $d(x,y) = \|\half[x-y]\|$ (see \cite{BenYaacov:NakanoSpaces}).
The results of \cite{BenYaacov-Berenstein-Henson:LpBanachLattices} were given in a different, yet equivalent, formalism, so they hold fully in ours.
The theory $ALpL$ was shown to be $\aleph_0$-stable, and canonical bases of $1$-types were described as tuples of \emph{conditional slices} in the home sort (see Section~5 there).
Even though they are very natural invariants of a $1$-type, conditional slices are not uniform, or even weakly uniform, in the sense of the present paper.
Our aim here is to replace the conditional slices with a related object which does provide a uniform canonical base.

We start by quickly recalling the \emph{Krivine calculus} on Banach lattices (see also \cite{Lindenstrauss-Tzafriri:ClassicalBanachSpacesII}).
\begin{lem}
  Every lattice term $t(\bar x)$ defines a function $t\colon \bR^n \to \bR$ which is finitely piecewise affine, continuous, and $\bR^+$-homogeneous of degree one, by which we mean that $t(\alpha\bar x) = \alpha t(\bar x)$ for all $\alpha \geq 0$.

  In addition, an arbitrary function $\varphi\colon \bR^n \to \bR$ is continuous and $\bR^+$-homogeneous of degree one if and only if it can be approximated by lattice terms uniformly on every compact.
\end{lem}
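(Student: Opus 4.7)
The plan is to handle the two halves of the lemma separately. For the first half I would proceed by structural induction on the term. The base cases are the coordinate projections $x_i$ and the constant $0$, which are linear and hence piecewise affine, continuous, and $\bR^+$-homogeneous of degree one. For the inductive step, each generating operation (addition, negation, scalar multiplication, and either $|\cdot|$ or the lattice operations $\vee$ and $\wedge$) preserves the three properties: continuity is standard, positive homogeneity follows by direct substitution, and the property of being finitely piecewise affine is preserved because the polyhedral-cone decompositions of the inputs can be intersected to give a common refinement, and further subdivided along the zero set of a linear function where $|\cdot|$, $\vee$, or $\wedge$ is applied. By positive homogeneity $t(0) = 0$, so the affine pieces in fact pass through the origin and are linear.

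For the second half, I would reduce the approximation statement to a Stone--Weierstrass argument on a compact sphere. Any compact $K \subseteq \bR^n$ is contained in some ball $\{ \|x\|_1 \le R \}$, and by $\bR^+$-homogeneity, a uniform approximation $|t(v) - \varphi(v)| < \epsilon$ on the sphere $S = \{ \|x\|_1 = 1 \}$ scales to $|t(x) - \varphi(x)| \le R\epsilon$ on $K$, with both sides vanishing at the origin; so it suffices to uniformly approximate $\varphi|_S$ by restrictions of lattice terms.

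Let $A \subseteq C(S)$ be the set of such restrictions. It is a real vector sublattice of $C(S)$ that separates the points of $S$ (the coordinate functions $x_i$ already do) and contains the constant $1$, since $\|x\|_1 = |x_1| + \cdots + |x_n|$ is itself a lattice term and restricts to $1$ on $S$. The Kakutani--Stone lattice form of the Stone--Weierstrass theorem then implies that $A$ is uniformly dense in $C(S)$, yielding the desired approximation on each compact $K$.

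I expect the main obstacle to lie in the Stone--Weierstrass step and the preparatory setup: choosing a convenient sphere on which lattice terms supply both the constant $1$ and the separation of points, and then using $\bR^+$-homogeneity cleanly to transfer the approximation back to an arbitrary compact subset of $\bR^n$, paying attention to the behaviour at the origin. The remainder of the argument is bookkeeping, since the lattice-theoretic closure properties and the Krivine-style structure of lattice terms behave well under all the operations involved.
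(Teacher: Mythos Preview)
Your argument is correct and follows the same overall strategy as the paper: reduce by positive homogeneity to uniform approximation on a sphere, and apply the lattice form of Stone--Weierstrass. The execution differs in one noteworthy way. The paper works on the Euclidean sphere $S^{n-1}$, where lattice terms do not restrict to constants, and therefore verifies the two-point interpolation hypothesis by hand, exhibiting for each pair of distinct points an explicit linear combination of coordinates achieving prescribed values. You instead work on the $\ell^1$-sphere $\{\|x\|_1 = 1\}$, so that the lattice term $|x_1| + \cdots + |x_n|$ restricts to the constant~$1$; together with point-separation by the coordinate projections, this lets you invoke Kakutani--Stone directly without any explicit construction. Your route is slightly slicker and avoids the case analysis; the paper's route is more self-contained in that it does not rely on having constants and makes the interpolating functions visible.
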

\begin{proof}
  The first assertion, as well as the if part of the second, are clear.
  For the only if, let us assume that $\varphi$ is continuous and $\bR^+$-homogeneous of degree one.
  Then it is determined by its restriction to the unit sphere.
  Since every uniform approximation of $\varphi$ on the unit sphere yields a uniform approximation on the entire unit ball, it will be enough to show that lattice terms are dense in $\cC(S^{n-1},\bR)$.
  They obviously form a lattice there, so it will be enough to show that for every distinct $\bar x,\bar y \in S^{n-1}$, every $a,b \in \bR$ and every $\varepsilon > 0$, there is a lattice term $t$ such that $|t(\bar x)-a|, |t(\bar y)-b| < \varepsilon$.

  We may assume that $x_0 \neq y_0$.
  If $|x_0| = |y_0|$ then we may assume that $x_0 < 0 < y_0$ and define $t(\bar z) = \frac{a}{y_0} z_0^- + \frac{b}{y_0} z_0^+$.
  Otherwise, we may assume that $|x_0| < |y_0|$, in which case the opposite inequality must hold for some other coordinate, say $|y_1| < |x_1|$.
  Then $x_1y_0 - x_0y_1 \neq 0$ and we may define
  \begin{gather*}
    t(\bar z)
    =
    \frac{bx_1-ay_1}{x_1y_0 - x_0y_1}z_0
    + \frac{ay_0-bx_0}{x_1y_0 - x_0y_1}z_1.
  \end{gather*}
  Either way, $t(\bar x) = a$ and $t(\bar y) = b$, which is even better than what we needed.
\end{proof}

It is also a fact that if $t$ is a lattice term with bound $\|t\|$ on $[-1,1]^n$ then for any Banach lattice $E$ and every sequence $\bar f \in E^n$ one has $|t(\bar f)| \leq \|t\| \bigvee_i |f_i|$.
It follows that if $t_k \to \varphi$ uniformly on $[-1,1]^n$ then $t_k(\bar f)$ converges in norm to a limit $\varphi(\bar f)$ which does not depend on the choice of converging lattice terms, and at a rate which only depends on the sequence $t_k$ and on $\| \bigvee_i |f_i| \|$.
It follows that the map $\varphi\colon E^n \to E$ is uniformly definable across all (unit balls of) Banach lattices.

\begin{conv}
  For $\alpha > 0$ we extend $x \mapsto x^\alpha$ to the whole real line by $(-x)^\alpha = -x^\alpha$ (so $(-7)^2 = -49$).
\end{conv}

\begin{lem}
  \label{lem:LpLqDefinition}
  For every $p,q \in [1,\infty)$ the theories $LpL$ and $LqL$ are quantifier-free bi-definable.
  More exactly, if $E = L_p(\Omega)$ and $F = L_q(\Omega)$ then we may identify their respective underlying sets via the bijection $f \mapsto f^{\frac{p}{q}}$.
  Under this identification, each Banach lattice structure is quantifier-free definable in the other.

  Moreover, if $q \in (1,\infty)$ and $q' = \frac{q}{q-1}$ is its conjugate exponent, then the duality pairing $\langle f,g \rangle = \int fg$ on $L_q \times L_{q'}$ is definable in $L_p$.
\end{lem}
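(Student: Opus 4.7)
The plan is to exploit the Krivine calculus lemma just established: every continuous $\bR^+$-homogeneous degree-one function $\varphi\colon \bR^n \to \bR$ is a uniform limit of lattice terms on compacts and hence induces a quantifier-free definable operation on the unit ball of every Banach lattice. Consequently, any pointwise operation on measurable functions described by such a $\varphi$ transfers to a quantifier-free definable operation on $L_p(\Omega)$, and the entire proof reduces to identifying the correct $\varphi$'s.

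The first step is to verify that $\Phi\colon f \mapsto f^{p/q}$, using the odd-function sign convention, is a pointwise bijection between $L_p(\Omega)$ and $L_q(\Omega)$, with inverse $g \mapsto g^{q/p}$, satisfying $\|f\|_p^p = \|\Phi(f)\|_q^q$ so that unit balls correspond and the $L_q$ metric is recovered from the $L_p$ one by a fixed continuous reparametrisation. The operations $0$, $-$, $|\cdot|$, and the lattice order all commute with $\Phi$ pointwise by direct inspection. The only structural operation that does not commute with $\Phi$ is the halved sum: pulled back via $\Phi^{-1}$, the $L_q$-midpoint of $\Phi(f), \Phi(g)$ is computed from their $L_p$-representatives by the pointwise function $\varphi(x,y) = \bigl(\tfrac{1}{2}(x^{p/q}+y^{p/q})\bigr)^{q/p}$, which is continuous (by the sign convention) and $\bR^+$-homogeneous of degree one (because the outer and inner exponents are reciprocals). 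The Krivine calculus then yields quantifier-free definability of the $L_q$-midpoint inside the $L_p$-structure, and the reverse direction is entirely symmetric, with $p$ and $q$ swapped.

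For the moreover part, write $u = f^{p/q}$ and $v = h^{p/q'}$, so that $u \in L_q$, $v \in L_{q'}$, and the pairing equals $\int f^{p/q} h^{p/q'}\,d\mu$. The idea is to express this as a difference $\|\tau_1(f,h)\|_p^p - \|\tau_2(f,h)\|_p^p$ where each $\tau_i\colon \bR^2\to\bR$ is a continuous, $\bR^+$-homogeneous degree-one function chosen so that $\tau_i(f,h)^p$ recovers the positive (resp.\ negative) part of $f^{p/q} h^{p/q'}$ pointwise. The natural choice is
\begin{gather*}
\tau_1(x,y) = (x^+)^{1/q}(y^+)^{1/q'} + (x^-)^{1/q}(y^-)^{1/q'},
\end{gather*}
with $\tau_2$ built from the two cross terms; degree-one homogeneity is exactly the identity $\tfrac{1}{q}+\tfrac{1}{q'}=1$, and continuity is immediate. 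The step that requires the most care, though I do not expect it to be a genuine obstacle, is the sign bookkeeping under the generalised $p$-th power convention; this is cleanly handled by the disjoint-support structure of the two summands in each $\tau_i$, which ensures that $\tau_i(f,h)^p$ is really a sum (rather than a polarisation-type expression) and picks out the correct region $\{fh\geq 0\}$ or $\{fh<0\}$. Given the $\tau_i$, the Krivine calculus applied to each, together with the definability of $\|\cdot\|_p^p$ in $L_p$, finishes the proof.
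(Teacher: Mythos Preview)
Your proposal is correct and follows essentially the same route as the paper: the bijection $f\mapsto f^{p/q}$, the observation that $0,-,|\cdot|$ commute with it, and the Krivine-calculus argument for the midpoint via $(x,y)\mapsto\bigl(\tfrac12(x^{p/q}+y^{p/q})\bigr)^{q/p}$ are exactly what the paper does. For the pairing, the paper uses the single Krivine function $(x,y)\mapsto x^{1/q}y^{1/q'}$ (with the sign convention) and then the definable predicate $h\mapsto\int h^p=\|h^+\|_p^p-\|h^-\|_p^p$; your explicit split into $\tau_1,\tau_2$ is just this positive/negative decomposition carried out by hand, so the two arguments are interchangeable.
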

\begin{proof}
  The map $\theta\colon f \mapsto f^{\frac{p}{q}}$ clearly is a bijection between $L_p(\Omega)$ and $L_q(\Omega)$.
  For any definable map or predicate in $L_q(\Omega)$, call it $\tau$, let $\tau^{L_q}$ denote its pre-image in $L_p(\Omega)$ by $\theta$.
  Thus, for example, $\|x\|^{L_q} = \|x\|^{\frac{p}{q}}$, so $\theta$ is also a bijection between the respective unit balls.
  Also, $0^{L_q} = 0$, and similarly for $-$, and $|\cdot|$ (namely, they commute with $\theta$).
  For $\half[x+y]$, we have $(\half[x+y])^{L_q} = \left( \half[ x^{p/q} + y^{p/q} ] \right)^{\frac{q}{p}}$: as a map $\bR^2 \rightarrow \bR$, this is continuous and $\bR^+$-homogeneous of degree one, and therefore uniformly definable in all Banach lattices, and in particular in models of $LpL$.
  Finally, $d(x,y)^{L_q} = \|\half[ x - y ]\|^{L_q} = \bigl\| (\half[x-y])^{L_q} \bigr\|^{\frac{p}{q}}$, and we already know that $(\half[x-y])^{L_q}$ is definable.
  It follows that the predicate $f \mapsto \int f^p$ is definable in $L_p(\Omega)$: when $p = 1$ this is merely $\|f^+\| - \|f^-\|$, where $f^+ = \half[|f| + f]$ and $f^- = \half[|f| - f]$, and for the general case we use the bi-definability proved above with $q = 1$.
  By the same arguments, $(x,y) \mapsto x^{\frac{1}{q}} y^{\frac{1}{q'}}$ is definable in $L_p(\Omega)$.
  Given $f,g \in L_p(\Omega)$ which represent $f^{\frac{p}{q}} \in L_q(\Omega)$ and $g^{\frac{p}{q'}} \in L_{q'}(\Omega)$, respectively, we have $\left\langle f^{\frac{p}{q}},g^{\frac{p}{q'}} \right\rangle = \left( \int f^{\frac{1}{q}} g^{\frac{1}{q'}} \right)^p$, which is definable from $f^{\frac{1}{q}} g^{\frac{1}{q'}}$ and therefore from $(f,g)$, concluding the proof.
\end{proof}

Recall from \cite{BenYaacov-Berenstein-Henson:LpBanachLattices} that for any embedding $E \subseteq F$ of $L_p$ lattices there are presentations of $E$ and $F$ as $L_p(\Omega_1,\Sigma_1,\mu_1) \subseteq L_p(\Omega_2,\Sigma_2,\mu_2)$, such that $\Omega_1 \subseteq \Omega_2$, $\Sigma_1 \subseteq \Sigma_2$ and $\mu_2$ extends $\mu_1$.
For $g \in E$, $f \in F$, and presentation as above, we say that $g = \bE[f|\Sigma_1]$ if $\int_A g = \int_A f$ for all $A \in \Sigma_1$ of finite measure.
Uniqueness of $g$ is clear, and existence is fairly standard (at least under some mild hypotheses on the measure spaces in question).
It is shown in \cite{BenYaacov-Berenstein-Henson:LpBanachLattices} that, moreover, $\bE[f|\Sigma_1]$ does not depend on the choice of presentation (i.e., of measure spaces), and is therefore denoted $g = \bE[f|E]$, the \emph{conditional expectation} of $f$ with respect to $E$, which will be an essential tool for this section.
We shall see below (\fref{lem:LpUniformCondExp}) that for $p > 1$, the Krivine calculus provides us with a more elementary argument showing that the conditional expectation depends only on $\tp(f/E)$ (and not on the presentation), and moreover definably so (an even more elementary argument shows independence of the presentation when $p = 1$, see \fref{rmk:LpOneConditionalExpectation}).

Let us recall from \cite{BenYaacov-Berenstein-Henson:LpBanachLattices} a few facts regarding types in $ALpL$.
Recall first that $f,g$ in a Banach lattice are called \emph{orthogonal}, in symbols $f \perp g$, if $|f| \wedge |g| = 0$, and if $E \subseteq F$ are two Banach lattices then $E^\perp = \{f \in F\colon f \perp E\} = \{f \in F\colon f \perp g \text{ for all } g \in E\}$ is a sub-lattice of $F$.
Recall also that if $E \subseteq F$ are $L_p$ lattices then $F = E^{\perp\perp} \oplus E^\perp$, so each $f \in F$ can be written uniquely as $f = f\rest_E + f\rest_{E^\perp}$, where $f\rest_{E^\perp} \perp E$ and $f\rest_E \perp E^\perp$.

\begin{fct}
  \label{fct:LpTypeCharacterisation}
  By \cite[Lemma~3.8]{BenYaacov-Berenstein-Henson:LpBanachLattices}, $\tp(f/E)$ is determined by $\tp(f\rest_E/E)$ together with $\|f\rest_{E^\perp}^\pm\|$, or equivalently, together with $\|f^\pm\|$.
  If $E = L_p(\Omega_1,\Sigma_1,\mu_1) \subseteq F = L_p(\Omega_2,\Sigma_2,\mu_2)$ is a concrete representation as above then $f\rest_E = f \mathbf{1}_{\Omega_1}$ and $f\rest_{E^\perp} = f \mathbf{1}_{\Omega_2 \setminus \Omega_1}$, and by \cite[Proposition~3.7]{BenYaacov-Berenstein-Henson:LpBanachLattices}, $\tp(f\rest_E/E)$ is determined by the $\Sigma_1$-conditional distribution of $f\rest_E$, namely by the map $t \mapsto \bP[f > t|\Sigma_1]$.
\end{fct}

We say that $F$ is \emph{atomless over $E$} if for every $g \in E$, $g \geq 0$, there is $f \in F$, $0 \leq f \leq g$, such that $f \perp g - f$ and $\|f\| = \|g\|/2$, or equivalently, if for any concrete presentation as above, $\mu_2$ is atomless over $\Sigma_1$.
If this is the case then, by virtue of the above, together with \cite[Fact~2.15]{BenYaacov-Berenstein-Henson:LpBanachLattices}, for any $f$ in an extension of $E$, $\tp(f\rest_E/E)$ is realised in $F$.
If, in addition, $E^\perp \subseteq F$ is non trivial, then any $1$-type (or indeed, $n$-type, but we shall not require this) over $E$ is realised in $F$.

Let $\Omega$ be a measure space, and let $\sA(\Omega)$ be the algebra (or more precisely, the relatively complemented distributive lattice) of finite measure sets modulo null measure sets, which, with some abuse of terminology, we shall call the \emph{measure algebra} of $\Omega$.
Let $\Omega'$ be the Stone space of $\sA(\Omega)$, an extremally disconnected locally compact space, equipped with the unique regular Borel measure which associates to each compact open set the (finite) measure of the corresponding measurable subset of $\Omega$, which exists by Carathéodory's Theorem.

\begin{fct}
  \label{fct:StoneSpaceOmega}
  With $\Omega$ and $\Omega'$ as above, $\sA(\Omega')$ is canonically isomorphic to $\sA(\Omega)$, yielding a canonical identification $L_p(\Omega) \cong L_p(\Omega')$, so we may assume that $\Omega = \Omega'$.
  Such a space has the particular property that every $g \in L_p(\Omega)$ has a unique continuous representative, so it makes sense to speak of the value $g(\omega) \in [-\infty,\infty]$ at some $\omega \in \Omega$, which is finite for almost all $\omega$.

  Algebraic and lattice operations, and therefore the lattice order, are reflected in these continuous representatives.
  If $A \subseteq L_p(\Omega)$ is countable (or separable) then $A$ is bounded in $L_p(\Omega)$ if and only if the point-wise supremum $g(\omega) = \sup_{f \in A} f(\omega)$ is in $L_p$, in which case it is equal almost everywhere to the continuous representative of $\sup A$.
\end{fct}
\begin{proof}
  The construction is straightforward.
  To see that it has the desired properties, it is enough to consider restrictions to finite measure sub-spaces, in which case this is shown in \cite[Theorem~1.9 and Proposition~1.14] {BenYaacov:SchroedingersCat}.
  For a more elaborate discussion, see \cite[Chapter~32]{Fremlin:MeasureTheoryVol3}.
\end{proof}

This spares us the headache of choosing representatives: for example, given a convex function $x \mapsto \Psi_f(x) \in L_p(\Omega)$, it is automatically true that $x \mapsto \Psi_f(x)(\omega)$ is convex for each $\omega$.

Let us fix $E \vDash ALpL$ and a type $q = \tp(f/E)$ for some $f$ in an extension of $E$.
Let us also fix a concrete representation $E = L_p(\Omega)$, with $\Omega$ as in \fref{fct:StoneSpaceOmega}, and define \hbox{$E' = L_p\bigl( ( \Omega \cup \{+, -\} ) \times [0,1] \bigr)$}, where the interval is taken with the Lebesgue measure, and each of $\pm$ has measure one.
Let $\pi_0\colon \Omega \times [0,1] \rightarrow \Omega$ denote the projection, and identify a function $g \colon \Omega \rightarrow \bR$ with $g \circ \pi_0\colon \Omega \times [0,1] \rightarrow \bR$, as well as with its extension by zeroes to $( \Omega \cup \{\pm\} ) \times [0,1]$, yielding a canonical embedding $E \subseteq E'$, which, by quantifier elimination for $ALpL$, is elementary.
By the previous paragraph, we may assume that $f \in E'$.
The conditional expectation of $f$ with respect to $\Omega$ (namely, to its Borel algebra) is
\begin{gather*}
  \bE[f|\Omega](\omega) = \int_0^1 f(\omega,t) \, dt,
\end{gather*}
and $\| \bE[f|\Omega] \| \leq \|f\|$ by Jensen's inequality and convexity of $x \mapsto x^p$ (for $f \geq 0$ the integral is finite almost everywhere, by Fubini's Thoerem and Jensen's inequality, so it is defined almost everywhere for all $f$).

\begin{lem}
  \label{lem:LpUniformCondExp}
  Assume that $p > 1$, and let $E \vDash ALpL$, $q \in \tS_1(E)$, and $f \vDash q$ in $E'$ as above.
  Then the conditional expectation $\bE[f|\Omega]$ defined above depends only on $q$, that is to say that for any presentation $E = L_p(\Omega') \subseteq L_p(\Omega'') = F$, and any $f' \in F$, if $f' \vDash q$ then $\bE[f'|\Omega'] = \bE[f|\Omega]$.
  Moreover, for any uniform canonical base map $\Cb$, the map $\Cb(f/E) \mapsto \bE[f|E]$ (where $\bE[f|E]$ now denotes $\bE[f|\Omega]$) is definable in $ALpL$.
  More generally, if $t(x,\bar y)$ is any lattice term, then the map $\bigl( \Cb(f/E),\bar g \bigr)
  \mapsto
  \bE\bigl[ t(f,\bar g) | E \bigr]$, where $\bar g \in E$, is definable.
\end{lem}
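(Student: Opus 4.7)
The plan is to characterise $\bE[f|E]$ via duality, using the bi-definability established in \fref{lem:LpLqDefinition}. With $p' = p/(p-1)$, that lemma gives the duality pairing
\begin{gather*}
  \langle f, h \rangle := \int f \cdot h^{p/p'}
\end{gather*}
as a uniformly definable predicate on $L_p \times L_p$; under the identification $h \leftrightarrow h^{p/p'} \in L_{p'}$ this is just the standard $L_p$--$L_{p'}$ duality. I claim that $g := \bE[f|E]$ is characterised as the unique $g \in E$ such that $\langle f,h \rangle = \langle g,h \rangle$ for every $h \in E$. Existence comes from the fibre construction itself: $\bE[f|\Omega]$ lies in $E$ by Jensen's inequality $\|\bE[f|\Omega]\|_p \leq \|f\|_p$, and satisfies the duality relation by Fubini. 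Uniqueness is where the hypothesis $p>1$ enters: if $g_1,g_2 \in E$ both satisfy the condition then, testing against $h = g_1 - g_2$ and using the identity $(p-1)p'/p = 1$, one computes $\langle g_1 - g_2, h \rangle = \|g_1 - g_2\|_p^p$, forcing $g_1 = g_2$.

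Next I would observe that, being a formula in $(f,h)$, the pairing factors through the canonical base: $\langle f, h \rangle = d\phi(h, \Cb(f/E))$ for the appropriate $d\phi$ coming from the uniform definition of types. This already yields the presentation-independence, since any other realisation $f'$ of $\tp(f/E)$, in any presentation, has the same canonical base and thus the same values $\langle f', h \rangle$ for $h \in E$, so it produces the same conditional expectation.

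For the definability of the map $\Cb(f/E) \mapsto \bE[f|E]$, I would introduce the definable predicate
\begin{gather*}
  \Psi(g, C) := \sup_{h \in E} \bigl| \langle g, h \rangle - d\phi(h, C) \bigr|.
\end{gather*}
Repeating the uniqueness computation with $h$ proportional to $g - g^*$, where $g^* := \bE[f|E]$, should yield a Lipschitz bound of the form $\|g - g^*\|_p \leq \Psi(g, C)$ (up to a constant absorbed in normalisation). This both pins down $g^*$ as the unique zero of $\Psi(\cdot, C)$ and upgrades the type-definable graph to a genuinely definable function $C \mapsto \bE[f|E]$. The extension to the lattice-term version $\bE[t(f, \bar g)|E]$ then becomes a direct application of \fref{lem:DefinableFunctionUniformCB}: the Krivine preamble gives $(f,\bar g) \mapsto t(f,\bar g)$ as a uniformly definable map in $ALpL$, so $\Cb(t(f,\bar g)/E)$ is definable from $(\Cb(f/E), \bar g)$, and composing with the already-constructed map finishes the proof.

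The main obstacle I anticipate is the Lipschitz estimate in the third paragraph: one must normalise carefully and verify that $\Psi(g, C)$ really controls $\|g - g^*\|_p$ uniformly — a variant of the uniqueness computation, but sharp enough to give a continuity modulus. It is exactly here that $p > 1$ is essential, since the exponent identity $(p-1)p'/p = 1$ breaks down at $p = 1$ (where the dual $L_\infty$ does not fit into the Krivine framework), explaining the separate, more elementary treatment for $p = 1$ alluded to in \fref{rmk:LpOneConditionalExpectation}.
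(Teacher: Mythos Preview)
Your proposal is correct and follows essentially the same route as the paper: characterise $\bE[f|E]$ via the duality $\int f h^{p-1} = \int g h^{p-1}$ for $h \in E$, observe that this pairing is definable (Krivine calculus / \fref{lem:LpLqDefinition}), and deduce both presentation-independence and definability of the map, with the lattice-term extension via \fref{lem:DefinableFunctionUniformCB}. The only cosmetic difference is that where you work to extract an explicit Lipschitz bound $\|g-g^*\|_p \leq \Psi(g,C)$ (which does go through: test against $h = (g-g^*)/\|g-g^*\|_p$), the paper simply observes that the graph is type-definable and $\|\bE[f|E]\| \leq \|f\|$ bounds the target, invoking the general fact that a function with type-definable graph into a bounded sort is definable --- so your worry about the ``main obstacle'' is in fact unnecessary.
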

\begin{proof}
  Clearly, if $g \in E$ satisfies that $\langle f,h \rangle = \langle g, h \rangle$ for all $h \in E^* = L_{p'}(\Omega)$, where $p' = \frac{p}{p-1}$ then $g = \bE[f|\Omega]$, and it is fairly standard to check that the converse holds as well.
  It follows that $g = \bE[f|\Omega]$ if and only if $\int f h^{p-1} = \int g h^{p-1}$ for all $h \in E$, or equivalently, for all $h$ in the unit ball of $E$.
  The map $(x,y) \mapsto x^{\frac{1}{p}} y^{\frac{p-1}{p}}$ is uniformly definable by the Krivine calculus, so $(x,y) \mapsto \| x^{\frac{1}{p}} y^{\frac{p-1}{p}} \|^p = \int |x| |y|^{p-1}$ is a definable predicate.
  Separating into positive and negative parts we see that $\varphi(x,y) = \int x y^{p-1}$ is a definable predicate as well.
  Together with our characterisation of $\bE[f|\Omega]$, this implies that it indeed only depends on $\tp(f/E)$ (in fact, all of this holds even when $E$ is a mere model of $LpL$, not necessarily atomless, in which case $\bE[f|\Omega]$ only depends on the quantifier-free type of $f$ over $E$).

  From now on we write $\bE[f|E]$ for $\bE[f|\Omega]$.
  With $\varphi$ as above, in $E$ we have $x = \bE[f|E]$ if and only if
  \begin{gather*}
    \sup_z \left|
      \varphi(x,z) - d\varphi\bigl( z, \Cb(f/E) \bigr)
    \right| = 0,
  \end{gather*}
  and this remains true if we restrict $z$ to the unit ball.
  Therefore the graph of $\Cb(f/E) \mapsto \bE[f|E]$ is type-definable, and moreover, given a bound on $\|f\|$ we have a bound on $\| \bE[f|E] \|$ (so, in a sense, we know in which ``sort'' $\bE[f|E]$ lives), so the map is definable.
  The case of a lattice term follows by \fref{lem:DefinableFunctionUniformCB}.
\end{proof}

\begin{rmk}
  \label{rmk:LpOneConditionalExpectation}
  This argument does not work for $p = 1$, and we shall see below that for $p = 1$ the definability of $\Cb(f/E) \mapsto \bE[f|E]$ fails.
  However, when $p = 1$ we have $\int f = \|f^+\| - \|f^-\|$, and if $h \in E$ is positive, say with support $A$ in some presentation, then $\int f\rest_A = \lim_{N \rightarrow \infty} (f \wedge Nh) \vee (-Nh)$, so again $\bE[f|E]$ only depends on $\tp(f/E)$.
\end{rmk}

Given $f$ in some extension of $E$, let $f_0 = \bE\bigl[ |f| \big| E \bigr] \in E$, and consider the function $\Psi_f\colon \bR \rightarrow E$, $\Psi_f(x) = \bE[ x f_0 \dotminus f|E]$.
It is convex, and therefore admits one-sided derivatives
\begin{gather*}
  D^+_x \Psi_f = \lim_{\varepsilon \searrow 0} \frac{\Psi_f(x + \varepsilon) - \Psi_f(x)}{\varepsilon} = \inf_{\varepsilon > 0} \frac{\Psi_f(x + \varepsilon) - \Psi_f(x)}{\varepsilon},
\end{gather*}
and similarly
\begin{gather*}
  D^-_x \Psi_f = \sup_{\varepsilon > 0} \frac{\Psi_f(x - \varepsilon) - \Psi_f(x)}{-\varepsilon}.
\end{gather*}
The function $\Psi_f$ is determined by $\tp(f/E)$, and by \fref{lem:LpUniformCondExp}, for $p > 1$ it is even uniformly definable from the canonical base.
Conversely,

\begin{conv}
  \label{conv:StandardRepresentation}
  Given $f$ in an extension of $E$ and $f_0 = \bE\bigl[ |f| \big| E \bigr]$ as above, one can always choose a concrete representation $E = L_p(\Omega)$ such that $f_0 = \bone_A$ is an indicator function.
  We may moreover assume that $\Omega$ is as in \fref{fct:StoneSpaceOmega}, in which case $A$ is a compact open set.
  We fix such a presentation for the rest of the section.

  We may still assume that \hbox{$f \in E' = L_p\bigl( ( \Omega \cup \{+, -\} ) \times [0,1] \bigr)$}, as in the discussion preceding \fref{lem:LpUniformCondExp}.
\end{conv}

\begin{lem}
  \label{lem:PsiDeterminesType}
  Under \fref{conv:StandardRepresentation}, the conditional distribution of $f\rest_E$ with respect to $\Omega$ is determined by
  \begin{gather*}
    \bP[f\rest_E \leq x|\Omega] \cdot \bone_A = \bP[f \leq xf_0|\Omega] \cdot f_0 = D^+_x \Psi_f, \qquad f\rest_E \cdot (1 - \bone_A) = 0,
  \end{gather*}
  In particular, the triplet $(\Psi_f,\|f^\pm\|)$, determines $\tp(f/E)$.
\end{lem}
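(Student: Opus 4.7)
The plan is to verify the two displayed equalities by direct computation in the concrete representation of \fref{conv:StandardRepresentation}, and then deduce the final sentence from \fref{fct:LpTypeCharacterisation}.

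For the identity $f\rest_E \cdot (1 - \bone_A) = 0$: since $f\rest_E$ and $f\rest_{E^\perp}$ have disjoint supports, $|f\rest_E| \leq |f|$; as $f\rest_E \in E$ it equals its own conditional expectation, hence by monotonicity $|f\rest_E| = \bE[|f\rest_E||E] \leq \bE[|f||E] = \bone_A$. The same identity $\bE[|f||E] = \bone_A$ forces $\int_0^1 |f(\omega,t)|\,dt = 0$ for almost every $\omega \notin A$, so $f$ vanishes a.e.\ on $(\Omega \setminus A) \times [0,1]$; this is used below.

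For the main identity, write $\Psi_f$ explicitly in the concrete representation using the formula $\bE[\cdot|\Omega](\omega) = \int_0^1 (\cdot)(\omega,t)\,dt$:
\begin{gather*}
  \Psi_f(x)(\omega) = \int_0^1 \bigl( xf_0(\omega) - f(\omega,t) \bigr)^+ dt,
\end{gather*}
which on $A$ reduces to $\int_0^1 (x - f(\omega,t))^+\,dt$ and vanishes off $A$ (both $xf_0(\omega) = 0$ and $f(\omega,\cdot) = 0$ there). For each fixed $\omega$ the scalar convex map $x \mapsto (x-s)^+$ has right derivative $\bone_{\{s \leq x\}}$, so bounded convergence on $[0,1]$ gives the pointwise identity $D^+_x \Psi_f(\omega) = \bone_A(\omega) \cdot \int_0^1 \bone_{\{f(\omega,t) \leq x\}}\,dt$. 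The difference quotients in $\omega$ are uniformly dominated by $\bone_A$, which lies in $L_p(\Omega)$ since $A$ has finite measure, so dominated convergence upgrades this to convergence in $L_p(\Omega)$. The resulting right-hand side is exactly $\bP[f\rest_E \leq x|\Omega] \cdot \bone_A$, and it also equals $\bP[f \leq xf_0|\Omega] \cdot f_0$: on $A$ we have $xf_0 = x$ and $f$ agrees with $f\rest_E$ on the $\Omega$-fibers, while off $A$ the factor $f_0$ annihilates both sides.

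For the concluding assertion, $\Psi_f$ recovers $f_0 = \bone_A$ (for example as the $L_p$-limit of $D^+_x \Psi_f$ as $x \to \infty$), and then the family $\{D^+_x \Psi_f\}_{x \in \bR}$ recovers the full conditional distribution $x \mapsto \bP[f\rest_E \leq x|\Omega]$ (non-trivially on $A$, and trivially off $A$ where $f\rest_E = 0$). By \fref{fct:LpTypeCharacterisation} this determines $\tp(f\rest_E/E)$, and combining with $\|f^\pm\|$ determines $\tp(f/E)$. The only genuine technical point is justifying the $L_p$-convergence of the one-sided derivative; everything else amounts to unfolding definitions.
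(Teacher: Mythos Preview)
Your proof is correct and follows essentially the same route as the paper: both compute $\Psi_f(x)(\omega)$ explicitly as an integral over the fibre $[0,1]$ and then take the right derivative to obtain the conditional probability. The paper packages the calculation geometrically, writing $\Psi_f(x)(\omega) = \mu_{[0,1]\times\bR}\{(t,s)\colon f(\omega,t)\leq s\leq x\}$ and reading off the derivative as the one-dimensional measure $\mu_{[0,1]}\{t\colon f(\omega,t)\leq x\}$, working pointwise in $\omega\in A$ and relying implicitly on the infimum form of $D^+_x\Psi_f$ together with \fref{fct:StoneSpaceOmega}; you differentiate under the integral sign and justify $L_p$-convergence by dominated convergence, and you treat the case $\omega\notin A$ and the identity $f\rest_E\cdot(1-\bone_A)=0$ more explicitly than the paper does.
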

\begin{proof}
  Indeed, for $\omega \in A$ we have
  \begin{gather*}
    \Psi_f(x)(\omega) = \mu_{[0,1] \times \bR}\Bigl\{ (t,s) \colon f(\omega,t) \leq s \leq x \Bigr\}
  \end{gather*}
  and
  \begin{align*}
    D^+_x \Psi_f(\omega)
    & = \inf_\varepsilon \frac{1}{\varepsilon} \mu_{[0,1] \times \bR} \Bigl\{ (t,s)\colon f(\omega,t) \vee x \leq s \leq x + \varepsilon \Bigr\}
    \\
    & = \mu_{[0,1]} \Bigl\{ t\colon f(\omega,t) \leq x \Bigr\} = \bP[f\rest_E \leq x|\Omega](\omega).
  \end{align*}
  The second assertion now follows from \fref{fct:LpTypeCharacterisation}.
\end{proof}

A useful tool in the study of convex functions is the \emph{Legendre transform}, which allows to pass between the spaces of points and of slopes.
For a function $\varphi\colon \bR \rightarrow \bR \cup \{\infty\}$, not identically $\infty$, we define
\begin{gather*}
  \varphi^*(t) = \sup_{x \in C_\varphi} tx - \varphi(x) \in \bR \cup \{\infty\}, \qquad \text{ where } C_\varphi = \{x \in \bR\colon \varphi(x) < \infty\}.
\end{gather*}
The function $\varphi^*$ is always convex and continuous on $\overline{C_{\varphi^*}}$ (namely at the boundary, since, being convex, it is automatically continuous in the interior).
If $\varphi^*$ is not identically $\infty$, which is always the case when $\varphi$ is convex, we can define the double Legendre transform $\varphi^{**}$, which is the greatest function below $\varphi$ which is convex and continuous on $\overline{C_\varphi}$, .
If $\varphi$ is convex and continuous on $\overline{C_\varphi}$ then $\varphi = \varphi^{**}$.
In this case, we have for every $x \in C_\varphi$ and $t \in C_{\varphi^*}$:
\begin{gather}
  \label{eq:LegendreTransformAttained}
  \varphi(x) + \varphi^*(t) = tx \quad \Longleftrightarrow \quad D^-_t \varphi^* \leq x \leq D^+_t \varphi^* \quad \Longleftrightarrow \quad D^-_x \varphi \leq t \leq D^+_x \varphi.
\end{gather}
(At the boundary of the domain, the derivatives may be $\pm\infty$, and are always finite in the interior.)
This can be generalised to functions on spaces of higher dimension, with $tx$ replaced with a duality pairing, see for example in Rockafellar \cite{Rockafellar:ConvexAnalysis}.

Working with a presentation $E = L_p(\Omega)$ as per \fref{conv:StandardRepresentation}, we can deduce similar properties for the lattice-valued $\Psi_f$.

\begin{lem}
  \label{lem:PsiDoubleLegendreTransform}
  Let $f \in F \supseteq E$, $f_0 = \bE\bigl[ |f| \big| E \bigr]$ and $\Psi_f(x) = \bE[x f_0 \dotminus f]$ be as above, and define
  \begin{gather*}
    \Psi_f^*(t) = \sup_x \, tx f_0 - \Psi_f(x), \qquad \Psi_f^{**}(x) = \sup_{t \in [0,1]} \, tx f_0 - \Psi_f^*(t).
  \end{gather*}
  Then $\Psi_f^*$ is finite, convex and continuous on $[0,1]$, with $\Psi_f^*(0) = 0$ and $\Psi_f^*(1) = \bE[f|E]$, it is infinite for $t \notin [0,1]$, and $\Psi_f = \Psi_f^{**}$.
\end{lem}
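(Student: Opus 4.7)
The plan is to reduce everything to the classical one-variable Legendre transform applied pointwise on the Stone space $\Omega$, and then transfer pointwise identities to lattice identities via \fref{fct:StoneSpaceOmega}, using uniform domination by $f_0 \in E$.

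Work in the presentation fixed by \fref{conv:StandardRepresentation} and, for $\omega \in \Omega$, set $\varphi_\omega(x) = \Psi_f(x)(\omega) = \int_0^1 (xf_0(\omega) - f(\omega,s))^+\, ds$. Off $A$ this vanishes identically, since $\bE[|f||E] = f_0 = 0$ there forces $f = 0$ almost everywhere on $(\Omega \setminus A) \times [0,1]$. On $A$, $\varphi_\omega$ is a non-negative convex function on $\bR$ whose right derivative is the cumulative distribution function $x \mapsto \mu\{s : f(\omega,s) \leq x\}$, takes values in $[0,1]$, with $\varphi_\omega(-\infty) = 0$ and $\varphi_\omega(x) = x - \bE[f|E](\omega) + o(1)$ as $x \to +\infty$. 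Classical one-dimensional convex analysis then yields that $\varphi_\omega^*(t) = \sup_x(tx - \varphi_\omega(x))$ is finite exactly for $t \in [0,1]$, convex and continuous on that interval, and $\varphi_\omega^{**} = \varphi_\omega$ on all of $\bR$. Evaluating at the maximiser $q_\omega(t)$ (the $t$-quantile of $f(\omega,\cdot)$) gives the explicit formula $\varphi_\omega^*(t) = \int_{\{s : f(\omega,s) \leq q_\omega(t)\}} f(\omega,s)\, ds$, from which $\varphi_\omega^*(0) = 0$, $\varphi_\omega^*(1) = \bE[f|E](\omega)$, and the crucial uniform bound $|\varphi_\omega^*(t)| \leq \int_0^1 |f(\omega,s)|\, ds = f_0(\omega)$ all follow at once.

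The only nontrivial step is the passage from pointwise to lattice. Since $x \mapsto \Psi_f(x)$ is norm-continuous from $\bR$ to $L_p$, the supremum defining $\Psi_f^*(t)$ may be taken over the countable dense set $\bQ$. For $t \in [0,1]$, the pointwise bound $txf_0(\omega) - \Psi_f(x)(\omega) \leq \varphi_\omega^*(t) \leq f_0(\omega)$ places the pointwise supremum $\omega \mapsto \varphi_\omega^*(t)$ inside $L_p$, so by \fref{fct:StoneSpaceOmega} the lattice supremum exists in $L_p$ and coincides almost everywhere with it. All the asserted properties of $\Psi_f^*$ on $[0,1]$ transfer directly: the values at $0$ and $1$, finiteness, convexity (immediate since $t \mapsto txf_0 - \Psi_f(x)$ is affine for each $x$, so the supremum is convex in $t$), and continuity (by dominated convergence, using again the bound $f_0 \in L_p$). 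For $t \notin [0,1]$ and $f \neq 0$, the pointwise value $\varphi_\omega^*(t) = +\infty$ on all of $A$ forces the family $\{txf_0 - \Psi_f(x)\}_{x \in \bQ}$ to be unbounded in $L_p$, which is what ``infinite'' means at the lattice level.

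The identity $\Psi_f = \Psi_f^{**}$ is obtained by running the same argument one level up. The pointwise double Legendre transform returns $\varphi_\omega$ on all of $\bR$ since $\varphi_\omega$ is convex and continuous, and the domination needed to match the lattice-valued supremum $\sup_{t \in [0,1]}(txf_0 - \Psi_f^*(t))$ to the pointwise one is automatic, since the family is already pointwise dominated above by $\varphi_\omega(x) = \Psi_f(x) \in L_p$. The hard part throughout is really just the bookkeeping of trading lattice-valued suprema for pointwise ones, and this is rendered essentially routine by the combination of \fref{fct:StoneSpaceOmega} and the uniform majorisation by $f_0 \in L_p$.
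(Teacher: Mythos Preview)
Your proof is correct and follows essentially the same route as the paper: reduce to the scalar Legendre transform $\psi_\omega^*$ at each point of the Stone space, then invoke \fref{fct:StoneSpaceOmega} to identify lattice suprema with pointwise ones. The one substantive difference is that you extract the uniform pointwise bound $|\varphi_\omega^*(t)| \leq \int_0^1 |f(\omega,s)|\,ds = f_0(\omega)$ (via the quantile representation, which strictly speaking should be written as $\varphi_\omega^*(t)=\int_0^t F_\omega^{-1}(u)\,du$ to handle atoms), and then obtain $L_p$-continuity on all of $[0,1]$ in one stroke by dominated convergence; the paper instead computes the endpoint values directly and gives a separate hands-on estimate for $\|\Psi_f^*(t)^\pm\|$ near $t=0,1$. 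Your packaging is a little cleaner, but the underlying idea is identical.
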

\begin{proof}
  We have $\|\Psi_f(x)\|_\infty \leq |x| + 1$ for $x \in \bR$, so for $\omega \in \Omega$ we may define a convex function $\psi_\omega\colon \bR \rightarrow \bR$ by $\psi_\omega(x) = \Psi_f(x)(\omega)$.
  For $x \leq y$ we have $\Psi_f(x) \leq \Psi_f(y) \leq \Psi_f(x) + (y-x) f_0$, and it follows that $\psi_\omega(x) \leq \psi_\omega(y) \leq \psi_\omega(x) + (y-x)$.
  Passing to the Legendre transform, this means that $\Psi_f^*(t) = \infty$ and $\psi_\omega^*(t) = \infty$ for all $\omega$ and $t \notin [0,1]$.
  For $t = 0,1$ we have
  \begin{gather*}
    \Psi_f^*(0) = \lim_{x \rightarrow -\infty} -\bE[x f_0 \dotminus f|E] = 0, \qquad \Psi_f^*(1) = \lim_{x \rightarrow \infty} \bE[x f_0 \wedge f|E] = \bE[f|E].
  \end{gather*}
  By convexity, $\Psi_f^*(t) \in E$ (i.e., is finite) for each $t \in [0,1]$, and by \fref{fct:StoneSpaceOmega}, $\psi_\omega^*(t) = \Psi_f^*(t)(\omega)$ for almost all $\omega$.
  Using the continuity of convex functions, we can exchange the quantifiers: for almost all $\omega$, $\psi_\omega^* = \Psi_f^*(\cdot)(\omega)$.
  It follows that for each $x$, for almost all $\omega$: $\psi_\omega^{**}(x) = \Psi_f^{**}(x)(\omega)$.
  Thus $\Psi_f(x) = \Psi_f^{**}(x)$, as desired.

  Let us show that $\Psi_f^*(t) \rightarrow 0$ as $t \searrow 0$ in $L_p$ (point-wise convergence almost everywhere is already known).
  For the positive part this just follows from convexity.
  For the negative part, for every $\varepsilon > 0$ there exists $s > 0$ such that $\bigl\| (-sf_0) \dotminus f \bigr\| < \varepsilon \|f_0\|$, so for $0 < \delta < \varepsilon/s$ we have
  \begin{gather*}
    \Psi_f^*(\delta) \geq -\delta s f_0 - \bE[(-sf_0) \dotminus f | E] \quad \Longrightarrow \quad \bigl\| \Psi_f^*(\delta)^- \bigr\| \leq 2\varepsilon \|f_0\|.
  \end{gather*}
  A similar argument works for $t \nearrow 1$.
\end{proof}

\begin{rmk}
  \label{rmk:Phi}
  If one is bothered with the presence of $f_0$ in the definition, one can also prove that
  \begin{gather*}
    \Psi_f^*(t) = \sup_{g \in E} \, tg - \bE[g \dotminus f|E], \qquad \bE[g \dotminus f|E] = \sup_{t \in [0,1]} \, tg - \Psi_f^*(t),
  \end{gather*}
  which looks formally closer to the (double) scalar Legendre transform.
  By the reasoning of \fref{lem:PsiDoubleLegendreTransform}, one can also prove a version of \fref{eq:LegendreTransformAttained}, namely
  \begin{gather*}
    \Psi^*(t) = tg - \bE[g \dotminus f] \quad \Longleftrightarrow \quad D^-_t \Psi_f^* \leq g \leq D^+_t \Psi_f^*.
  \end{gather*}
  This was indeed the path followed in an earlier version of this article, but the present seems more direct.

\end{rmk}

For $0 < t < 1$, let us define $f_t = D^-_t \Psi^*$, which determines an increasing sequence $(f_t) \subseteq E$.
Following \fref{conv:StandardRepresentation}, we may then define (measurably) $\hat f\colon ( \Omega \cup \{+, -\} ) \times [0,1] \rightarrow \bR$ by $\hat f(\omega,t) = f_t(\omega)$, $\hat f(+,\cdot) = \|f^+\rest_{E^\perp}\|$, $\hat f(-,\cdot) = -\|f^-\rest_{E^\perp}\|$ (ignoring $t = 0,1$).

\begin{lem}
  \label{lem:IncreasingRealisation}
  With $E \subseteq E'$, $f_t$ and $\hat f$ as above, we have $\hat f \in E'$, $\tp(f/E) = \tp(\hat f/E)$, and $\|f_t\| \leq \frac{\|f\|}{(t-t^2)^{1/p}}$ for all $0 < t < 1$.
\end{lem}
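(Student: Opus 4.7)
The plan is to work point-wise on $\Omega$, using \fref{fct:StoneSpaceOmega} to identify $\Psi_f(x)$ and $\Psi_f^*(t)$ with their unique continuous representatives, and to recognise $f_t$ as a conditional quantile function. For $\omega \in A$, the convex function $\psi_\omega(x) := \Psi_f(x)(\omega) = \int_0^1 (x - f(\omega,s))^+\,ds$ has right derivative $D^+_x \psi_\omega = F_\omega(x) := \mu_s\{s\colon f(\omega,s) \leq x\}$, the CDF of $s \mapsto f(\omega,s)$. On $\Omega \setminus A$ one has $f \equiv 0$ almost everywhere (because $\bE[|f||E] = \bone_A$), so $\psi_\omega \equiv 0$ and $\psi_\omega^* \equiv 0$ on $[0,1]$, giving $f_t(\omega) = 0$ there.

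For $\omega \in A$, applying \fref{eq:LegendreTransformAttained} to the scalar convex function $\psi_\omega$ yields
\begin{gather*}
  D^-_t \psi_\omega^*(t) \quad = \quad \inf\{x\colon F_\omega(x) \geq t\} \quad = \quad F_\omega^{-1}(t),
\end{gather*}
the left-continuous quantile of $f(\omega,\cdot)$. The point-wise identification $\psi_\omega^* = \Psi_f^*(\cdot)(\omega)$ established in the proof of \fref{lem:PsiDoubleLegendreTransform} then gives $f_t(\omega) = F_\omega^{-1}(t)$ for almost every $\omega$, so $\hat f(\omega,t) = F_\omega^{-1}(t)$ on $A \times [0,1]$ and $\hat f(\omega,t) = 0$ on $(\Omega \setminus A) \times [0,1]$. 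Since $t \mapsto F_\omega^{-1}(t)$ pushes the Lebesgue measure on $[0,1]$ forward to the distribution of $f(\omega,\cdot)$, the map $s \mapsto \hat f(\omega,s)$ has the same distribution as $s \mapsto f(\omega,s)$ for almost every $\omega$. Integrating, $\int_{\Omega \times [0,1]} |\hat f|^p\,d\mu\,dt = \int_{\Omega \times [0,1]} |f|^p\,d\mu\,dt < \infty$, which together with the definition of $\hat f$ on $\{\pm\} \times [0,1]$ shows $\hat f \in E'$. Moreover the conditional distributions of $\hat f\rest_E$ and $f\rest_E$ with respect to $\Omega$ coincide, while by construction $\|\hat f^\pm \rest_{E^\perp}\| = \|f^\pm \rest_{E^\perp}\|$, so by \fref{fct:LpTypeCharacterisation} we obtain $\tp(\hat f/E) = \tp(f/E)$.

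For the norm bound, I use a Chebyshev-type inequality applied to the quantile: for $\omega \in A$ and $t \in (0,1)$, at least a measure $\min(t, 1-t)$ of $s \in [0,1]$ satisfies $|f(\omega,s)| \geq |F_\omega^{-1}(t)|$, separating the cases $F_\omega^{-1}(t) \geq 0$ (where $\mu_s\{f(\omega,s) \geq F_\omega^{-1}(t)\} \geq 1-t$) and $F_\omega^{-1}(t) \leq 0$ (where $\mu_s\{f(\omega,s) \leq F_\omega^{-1}(t)\} \geq t$). Using $\min(t,1-t) \geq t(1-t)$, this gives $\int_0^1 |f(\omega,s)|^p\,ds \geq t(1-t) |f_t(\omega)|^p$. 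Integrating over $\omega \in \Omega$ yields $\|f\rest_E\|_p^p \geq t(1-t) \|f_t\|_p^p$, and since $\|f\rest_E\| \leq \|f\|$ (as $f\rest_E$ and $f\rest_{E^\perp}$ have disjoint supports), the stated bound $\|f_t\| \leq \|f\|/(t-t^2)^{1/p}$ follows.

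The main delicate point is the transfer between the scalar Legendre transform calculation at each $\omega$ and its $L_p$-valued counterpart: one needs the point-wise identities $\psi_\omega^*(t) = \Psi_f^*(t)(\omega)$ and $D^-_t \Psi_f^*(\omega) = D^-_t \psi_\omega^*$ to hold almost everywhere in $\omega$, which is precisely what \fref{fct:StoneSpaceOmega} and the arguments inside \fref{lem:PsiDoubleLegendreTransform} have been set up to provide. Everything else reduces either to the classical quantile/CDF correspondence or to Fubini together with the elementary Chebyshev bound.
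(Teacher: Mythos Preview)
Your argument is correct and, for the first two assertions, follows the paper's line essentially verbatim: the pointwise identification of $f_t(\omega)$ with the left-continuous quantile $F_\omega^{-1}(t)$ via \fref{eq:LegendreTransformAttained} is exactly how the paper shows $\bP[\hat f \leq x|\Omega] = \bP[f \leq x|\Omega]$ on $A$, and then invokes \fref{fct:LpTypeCharacterisation}.

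For the norm bound the two proofs diverge slightly. You argue pointwise via a Chebyshev-type estimate, $\int_0^1 |f(\omega,s)|^p\,ds \geq \min(t,1-t)\,|f_t(\omega)|^p \geq t(1-t)\,|f_t(\omega)|^p$, and then integrate over $\omega$. The paper instead exploits the Fubini identity $\|f\|^p \geq \|\hat f\rest_E\|^p = \int_0^1 \|f_s\|^p\,ds$ together with the monotonicity of $s \mapsto \|f_s^+\|$ and $s \mapsto \|f_s^-\|$, obtaining the sharper intermediate inequalities $\|f_t^+\|^p \leq \|f\|^p/(1-t)$ and $\|f_t^-\|^p \leq \|f\|^p/t$ before summing. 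Your route is perhaps more self-contained (it does not need the $\hat f$ construction at all), while the paper's route records the separate bounds on the positive and negative parts; both are elementary and land on the same inequality.
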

\begin{proof}
  Let us show that the conditional distribution of $\hat f\rest_E = \hat f \cdot \bone_\Omega$ with respect to the Borel algebra of $\Omega$, call it $\Sigma$, is the same as that of $f\rest_E$.
  Indeed, let $f_0 = \bone_A$.
  Then $\hat f \bone_{\Omega \setminus A} = 0 = f \bone_{\Omega \setminus A}$, while for (almost all) $\omega \in A$ we have by \fref{lem:PsiDeterminesType} and \fref{eq:LegendreTransformAttained}:
  \begin{gather*}
    \bP[\hat f \leq x|\Omega](\omega) = \sup \{ t \colon D^-_t \Psi_f^*(\omega) \leq x \} = \sup \{ t \colon t \leq D^+_x \Psi_f(\omega) \} = \bP[f \leq x|\Omega](\omega).
  \end{gather*}
  It follows that $\|\hat f\rest_E\|_p = \|f\rest_E\|_p < \infty$, so $\hat f \in E'$, and as in \fref{lem:PsiDeterminesType} we conclude that the types are the same.
  It follows that
  \begin{gather*}
    \|f\|^p \geq \|\hat f\rest_E\|^p = \int_0^1 \|f_t\|^p\,dt = \int_0^1 \|f_t^+\|^p + \|f_t^-\|^p\,dt.
  \end{gather*}
  Since $\|f_t^+\|$ (respectively, $\|f_t^-\|$) is increasing (respectively, decreasing) in $t$ we get
  \begin{gather*}
    \|f_t^+\|^p \leq \frac{\|f\|^p}{1-t}, \qquad \|f_t^-\|^p \leq \frac{\|f\|^p}{t},
  \end{gather*}
  whence the stated inequality.
\end{proof}

A comparison with \cite[Section~5]{BenYaacov-Berenstein-Henson:LpBanachLattices} yields that $f_t$ agrees with the conditional slice $\bS_{1-t}(f/E)$, as defined there, for all $t$ at which $\Psi_f^*$ is differentiable (namely, at which $D^+_t \Psi_f^* = D^-_t \Psi_f^* = f_t$), and in particular for almost all $t$.
The realisation constructed above is called there an ``increasing realisation'', and is shown to be unique when $f = f\rest_E$.
It follows that each $1$-type over $E$ has a unique realisation in $f \in E'$ such that $f(\omega,t)$ is increasing in $t$ on $\Omega \times [0,1]$, is a non negative constant on $\{+\} \times [0,1]$, and is a non positive constant on $\{-\} \times [0,1]$.
In other words, the map $\Psi_f^*$ (together with $\|f^\pm\|$) not only determines $\tp(f/E)$, but also provides the means for a fairly direct construction of a canonical realisation of the type.

The continuity of $\Psi_f^*$ implies that $\Psi_f^*(t) = \int_0^t f_s\, ds$.
Since $\Psi_f^*(1) = \int_0^1 f_s\, ds = \bE[f|E]$, we may think of $\Psi_f^*(t)$ as a ``part'' of the conditional expectation, (with complement $\bE[f|E] - \Psi_f^*(t) = \int_t^1 f_s\, ds$), calling it a \emph{partial conditional expectation}, denoted accordingly $\bE_t[f|E] = \Psi_f^*(t)$.
In particular, $\bE_0[f|E] = 0$ and $\bE_1[f|E] = \bE[f|E]$.

\begin{lem}
  \label{lem:LpUniformPartialCondExp}
  Assume that $p > 1$.
  Then $\Cb(f/E) \mapsto \bE_t[f|E]$ is definable in $ALpL$ for all $t \in [0,1]$.
\end{lem}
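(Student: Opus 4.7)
The plan is to express $\bE_t[f|E] = \Psi_f^*(t)$ as a lattice supremum taken over a bounded ball in $E$, which gives a type-definable graph, and then conclude as in the last paragraph of the proof of \fref{lem:LpUniformCondExp}. Throughout, I fix $t \in (0,1)$ and work in a fixed ball $\|f\|_p \leq M$; the edge cases $t = 0$ and $t = 1$ reduce respectively to $0$ and to $\bE[f|E]$, the latter already definable by \fref{lem:LpUniformCondExp}.

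The building blocks all come from \fref{lem:LpUniformCondExp}: applied to the lattice term $|x|$, the map $\Cb(f/E) \mapsto f_0 = \bE[|f||E]$ is uniformly definable; applied to the lattice term $y \dotminus x$ with parameter $y$, the map $(\Cb(f/E), g) \mapsto \bE[g \dotminus f|E]$ is uniformly definable for $g \in E$. I then invoke the alternative Legendre formulation from \fref{rmk:Phi}, $\Psi_f^*(t) = \sup_{g \in E} (tg - \bE[g \dotminus f|E])$, and argue that this lattice supremum is attained within a bounded ball. The key input is the conditional slice $f_t \in E$ supplied by \fref{lem:IncreasingRealisation}, satisfying $\|f_t\|_p \leq M/(t-t^2)^{1/p} =: C$. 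In the standard presentation of \fref{conv:StandardRepresentation}, the pointwise Legendre identity \fref{eq:LegendreTransformAttained} applied at $f_t(\omega) \in [D_t^-\Psi_f^*(\omega), D_t^+\Psi_f^*(\omega)]$ yields $\Psi_f^*(t) = tf_t - \bE[f_t \dotminus f|E]$ as an identity in $E$, so the supremum is attained at $g = f_t$ and may be restricted to $\{g \in E : \|g\|_p \leq C\}$ without change.

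With this bounded formulation the graph of $\Cb(f/E) \mapsto \bE_t[f|E]$ becomes type-definable via two conditions on a candidate $(\Cb(f/E), h)$: first, $h \geq tg - \bE[g \dotminus f|E]$ for every $g \in E$ with $\|g\|_p \leq C$, a universal family of definable inequalities; and second, $\inf\{\|h - (tg - \bE[g \dotminus f|E])\|_p : g \in E, \|g\|_p \leq C\} = 0$, an infimum-equals-zero condition over the same bounded sort. Together these force $h = \Psi_f^*(t)$ uniquely (using the attainment at $g = f_t$), and $\|h\|_p \leq \int_0^t \|f_s\|_p\, ds$ bounds $h$ in a fixed sort depending only on $t$ and $M$. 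Type-definability of the graph plus a bounded target sort then yields definability of the map, exactly as in \fref{lem:LpUniformCondExp}.

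The main obstacle is the attainment step: one has to know that the lattice supremum is realised by a single element $f_t \in E$ of norm at most $C$, rather than merely approached pointwise by scalars $x \to \infty$ on some unbounded portion of $\Omega$. A naive approach that tried to approximate the scalar version $\sup_{x \in \mathbb{R}} (txf_0 - \Psi_f(x))$ by finite maxima over $x$ in a bounded interval would fail to be uniform in $f$, since $\{|f_t|^p\}$ need not be uniformly integrable as $f$ ranges over the $M$-ball. Invoking \fref{lem:IncreasingRealisation} to simultaneously furnish the attaining $f_t$ \emph{and} its norm bound is the crucial ingredient: it lets me replace a scalar supremum of unbounded range by a lattice supremum over a bounded ball of $g \in E$, which is then amenable to continuous-logic quantification.
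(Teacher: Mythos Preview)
Your argument is correct and takes a genuinely different route from the paper. You use the attainment $\Psi_f^*(t) = tf_t - \bE[f_t \dotminus f|E]$ together with the norm bound on $f_t$ from \fref{lem:IncreasingRealisation} to rewrite $\bE_t[f|E]$ as a lattice supremum over a bounded ball of $g \in E$, and then conclude via the ``type-definable graph with bounded target $\Rightarrow$ definable'' principle, just as in \fref{lem:LpUniformCondExp}. The paper instead carries out precisely the scalar approximation you describe as naive: it sets $h_N = \sup_{|x|\leq N}\bigl(txf_0 - \Psi_f(x)\bigr)$, writes each $h_N$ as a uniform limit of finite lattice maxima $g_{N,n}$, and shows $h_N \to \bE_t[f|E]$ in $L_p$ at a rate depending only on $\|f\|$ and $t$. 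The convergence step does \emph{not} require uniform integrability of $\{|f_t|^p\}$: under \fref{conv:StandardRepresentation} one has the pointwise bound $-f_0 \leq h_N \leq \bE_t[f|E] \leq f_0 = \bone_A$, so $|h_N - \bE_t[f|E]| \leq 2$ everywhere, while $h_N = \bE_t[f|E]$ on $\{|f_t|\leq N\}$; Chebyshev then gives $\mu\{|f_t|>N\} \leq \|f_t\|_p^p/N^p$, and the bound on $\|f_t\|_p$ from \fref{lem:IncreasingRealisation} makes this uniform in $f$. So both arguments hinge on the same norm bound for $f_t$, deployed differently: yours to localise a lattice supremum to a bounded sort amenable to a $\sup/\inf$ characterisation of the graph, the paper's to control the tail in an explicit approximation scheme. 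Your version has the virtue of parallelling \fref{lem:LpUniformCondExp}; the paper's yields an explicit rate of convergence.
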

\begin{proof}
  For $t = 0,1$ this is already known by \fref{lem:LpUniformCondExp}.
  Since $\Cb(f/E) \mapsto \Cb(|f|/E)$ is also definable, $f_0 = \bE\bigl[ |f| \big| E \bigr]$ is uniformly definable from the canonical base.
  We may therefore assume that $0 < t < 1$, and use $f_0$ as a parameter.
  Similarly, for each $x$ the map $\Cb(f/E) \mapsto \Psi_f(x)$ is definable.

  First of all, let us fix $N > 0$, and let
  \begin{gather*}
    g_{N,n} = \bigvee_{|k| \leq n} tNkf_0/n - \Psi_f(Nk/n), \qquad h_N = \sup_{-N \leq x \leq N} \, tx f_0 - \Psi_f(x).
  \end{gather*}
  Then the map $\Cb(f/E) \mapsto g_{N,n}$ is definable (for each $n$), and $0 \leq h_N - g_{N,n} \leq 2N f_0/n$, so $g_{N,n} \rightarrow h_N$ at a rate which depends on $\|f\|$ and on $N$ alone.
  It follows that $\Cb(f/E) \mapsto h_N$ is definable.

  By \fref{eq:LegendreTransformAttained} we have $h_N(\omega) = \bE_t[f|E](\omega)$ when $|f_t(\omega)| \leq N$, and otherwise $|h_N(\omega) - \bE_t[f|E](\omega)| \leq 2$, since
  \begin{gather*}
    -f_0 \leq -\bE[f^-|E] \leq h_N \leq \bE_t[f|E] \leq t\bE[f|E] \leq f_0.
  \end{gather*}
  The measure of $\{\omega\colon |f_t(\omega)| > N\}$ vanishes as $N \rightarrow \infty$, and at a rate which depends only on $\|f_t\|$, which we already bounded in terms of $t$ and $\|f\|$.
  We conclude that $h_N \rightarrow \bE_t[f|E]$ at a rate which depends only on $\|f\|$ and $t$, which is enough.
\end{proof}

\begin{thm}
  \label{thm:LpNotOneUniformCB}
  For every $p \in (1,\infty)$ and every dense subset $D \subseteq (0,1)$ (e.g., $D = \bQ \cap (0,1)$), the tuple $\bigl( \|f^+\|, \|f^-\|, \bE_t[f|E] \bigr)_{t \in D}$ is a uniform canonical base for $\tp(f/E)$, and $\bigl( \bE_t[f|E] \bigr)_{t \in D}$, is a weakly uniform canonical base in the home sort.
\end{thm}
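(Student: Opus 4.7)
The plan is to apply \fref{lem:BijectionUniformCB}: starting from any uniform canonical base map $\Cb$, we show that the map
\begin{gather*}
  \Cb(f/E) \quad \mapsto \quad \bigl( \|f^+\|, \|f^-\|, \bE_t[f|E] \bigr)_{t \in D}
\end{gather*}
is definable and injective on $\img \Cb$, whence its composition with $\Cb$ is a uniform canonical base map of the stated form.

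For definability, the $\bE_t[f|E]$ components are definable from $\Cb(f/E)$ by \fref{lem:LpUniformPartialCondExp}. The real-valued components $\|f^\pm\|$ correspond to formulae without parameters evaluated at $f$; being determined by $\tp(f/E)$, they are in particular uniformly definable from $\Cb(f/E)$ as the appropriate $\varphi$-definitions (with $\bar y$ empty).

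For injectivity, it suffices to show that the tuple determines $\tp(f/E)$, since $\Cb$ is injective on types. By \fref{lem:PsiDoubleLegendreTransform}, $t \mapsto \bE_t[f|E] = \Psi_f^*(t)$ is convex and continuous on $[0,1]$ and infinite elsewhere, so density of $D$ in $(0,1)$ allows us to recover all of $\Psi_f^*$ from $(\bE_t[f|E])_{t \in D}$. The same lemma then yields $\Psi_f = \Psi_f^{**}$ via the double Legendre transform, and \fref{lem:PsiDeterminesType} gives $\tp(f/E)$ from $(\Psi_f, \|f^\pm\|)$. The main point to check — and the only mildly delicate one — is that the double Legendre recovery is carried out pointwise on $\Omega$ and then reassembled in the lattice sense, but this is exactly the content of the proof of \fref{lem:PsiDoubleLegendreTransform} (invoking \fref{fct:StoneSpaceOmega}), so we may simply quote it.

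Finally, for the weakly uniform statement, the target sort of the uniform canonical base just constructed decomposes as $Z_0 \times Z_1$, where $Z_0$ is the (power of the) home sort holding the $\bE_t[f|E]$, and $Z_1 = [0,1]^2$ is a power of the constant sort holding $\|f^\pm\|$. Applying \fref{prp:WeaklyUniformCB}, the restriction $\bigl( \bE_t[f|E] \bigr)_{t \in D}$ to the home-sort part is therefore a weakly uniform canonical base map, completing the proof.
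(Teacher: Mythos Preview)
Your proof is correct and follows essentially the same approach as the paper: definability of each component via \fref{lem:LpUniformPartialCondExp} (and trivially for $\|f^\pm\|$), recovery of $\Psi_f$ from the dense family $(\bE_t[f|E])_{t\in D}$ via \fref{lem:PsiDoubleLegendreTransform}, determination of the type via \fref{lem:PsiDeterminesType}, and then \fref{prp:WeaklyUniformCB} for the weakly uniform part. You are merely more explicit than the paper in invoking \fref{lem:BijectionUniformCB} as the organising principle.
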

\begin{proof}
  By \fref{prp:WeaklyUniformCB} it is enough to prove the first assertion.
  We have already seen that $\Cb(f/E) \mapsto \bE_t[f|E]$ is definable, and clearly $\Cb(f/E) \mapsto \|f^\pm\|$ are.
  On the other hand, by \fref{lem:PsiDoubleLegendreTransform} the tuple $\bigl( \|f^+\|, \|f^-\|, \bE_t[f|E] \bigr)_{t \in D}$ determines $\Psi_f$ and therefore, by \fref{lem:PsiDeterminesType}, it determines $\tp(f/E)$.
\end{proof}

We have already observed that $\bE_t[f|E] \to 0$ as $t \to 0$ and $\bE_t[f|E] \to \bE[f|E]$ as $t \to 1$.
Moreover, for $p > 1$ (fixed) the rate of convergence depends uniformly on $\|f\|$.
Indeed, otherwise \fref{lem:LpUniformPartialCondExp} together with a compactness argument would yield a type (or a canonical base of a type, which is the same thing) for which convergence fails altogether.
On the other hand, for $p = 1$, consider for some $\varepsilon$ the case where $\mu(\Omega) = 1$ and $f_\varepsilon(x,t) = -\varepsilon^{-1}\mathbf{1}_{\Omega \times [0,\varepsilon]}$.
Then $\|f_\varepsilon\| = 1$ and $\bE_\varepsilon[f_\varepsilon|E] = -\mathbf{1}_\Omega$, also of norm one, so the rate of convergence is not uniform.
Thus \fref{lem:LpUniformPartialCondExp}, and therefore \fref{lem:LpUniformCondExp}, fail for $p = 1$.
This is essentially the only obstacle, and by keeping away from the endpoints of $[0,1]$ we do manage to get an analogue of \fref{thm:LpNotOneUniformCB} for $p = 1$.
For $0\leq t < s \leq 1$ let us define
\begin{gather*}
  \bE_{[t,s]}[f|E]
  =
  \bE_s[f|E] - \bE_t[f|E]
  =
  \int_t^s f_r\, dr.
\end{gather*}

\begin{lem}
  \label{lem:LpUniformIntervalCondExp}
  Let $p \in [1,\infty)$.
  Then for every $0<s<t<1$, the map $\Cb(f/E) \mapsto \bE_{[t,s]}[f|E]$ is definable.
\end{lem}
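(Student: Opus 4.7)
The case $p > 1$ is immediate from \fref{lem:LpUniformPartialCondExp}: one writes
\[
\bE_{[t,s]}[f|E] = \bE_s[f|E] - \bE_t[f|E],
\]
and both summands on the right are definable from $\Cb(f/E)$ by that lemma.

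For $p = 1$, the proof of \fref{lem:LpUniformPartialCondExp} no longer applies, since it went through \fref{lem:LpUniformCondExp}, whose argument uses the $L_p$--$L_{p'}$ duality pairing and degenerates at $p = 1$ (the exponent $p-1$ vanishes). As the discussion following \fref{thm:LpNotOneUniformCB} already emphasises, the resulting failure of definability is concentrated at the endpoints $t = 0, 1$ of $[0,1]$, which the hypothesis $0 < t < s < 1$ precisely avoids. My plan is therefore to reproduce the structure of the proof of \fref{lem:LpUniformPartialCondExp} with modifications valid in the interior, exploiting the uniform bound $\|f_r\| \leq \|f\|/(r(1-r))^{1/p}$ from \fref{lem:IncreasingRealisation}, which on any compact $[t, s] \subseteq (0,1)$ depends only on $\|f\|$ and on $\min\{t, 1-s\}$.

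Concretely, I would replace the step ``$\Cb(f/E) \mapsto \Psi_f(x)$ is definable'' (which invoked \fref{lem:LpUniformCondExp}) by its truncated analogue for $p = 1$: for each fixed $M > 0$ and $x \in \bR$, I would show that $\Cb(f/E) \mapsto \bE\bigl[(xf_0 \dotminus f) \wedge Mf_0 \,\big|\, E\bigr]$ is definable. The integrand is now a lattice term in $f$ and $f_0$ confined to the ball of radius $M\|f_0\|$, on which the $L_1$ conditional expectation is $1$-Lipschitz in $L_1$-norm, so that the pathologies responsible for the failure of \fref{lem:LpUniformCondExp} at $p = 1$ do not arise; if a direct Krivine-calculus argument does not suffice, one can pass through the bi-definability of $L_1$ with some $L_q$, $q > 1$, from \fref{lem:LpLqDefinition}. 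Combined with the variational formula of \fref{rmk:Phi} for $\Psi_f^*(r)$, in which \fref{lem:IncreasingRealisation} confines the relevant maximisers $g = f_r \in E$ to a bounded region, this should yield definability of $\Psi_f^*(r)$ for $r$ in a compact subinterval of $(0,1)$, and hence of $\bE_{[t,s]}[f|E] = \Psi_f^*(s) - \Psi_f^*(t)$.

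The main obstacle will be establishing the definability of the truncated conditional expectation for $p = 1$ cleanly, ideally without detouring through the nonlinear identification $f \mapsto f^{1/q}$ provided by \fref{lem:LpLqDefinition}. Once that is in hand, the passage from pointwise definability of $\Psi_f^*$ on $[t,s]$ to the formula $\bE_{[t,s]}[f|E] = \Psi_f^*(s) - \Psi_f^*(t)$, and the Riemann-sum-style approximation controlled by the bound from \fref{lem:IncreasingRealisation}, should parallel the second half of the proof of \fref{lem:LpUniformPartialCondExp} essentially verbatim.
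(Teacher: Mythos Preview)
Your treatment of the case $p > 1$ agrees with the paper's: both simply subtract the two partial conditional expectations furnished by \fref{lem:LpUniformPartialCondExp}.

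For $p = 1$, however, your plan contains a genuine gap. You propose to establish that $\Psi_f^*(r) = \bE_r[f|E]$ is definable from $\Cb(f/E)$ for each fixed $r \in (0,1)$, and then take the difference. But this intermediate claim is \emph{false}. Revisit the family $f_\varepsilon = -\varepsilon^{-1}\mathbf{1}_{\Omega\times[0,\varepsilon]}$ from the discussion following \fref{thm:LpNotOneUniformCB}: one computes $\Psi_{f_\varepsilon}^*(t) = -t/\varepsilon$ for $t \le \varepsilon$ and $\Psi_{f_\varepsilon}^*(t) = -1$ for $t \ge \varepsilon$, so $\bE_{1/2}[f_\varepsilon|E] = -\mathbf 1_\Omega$ for every $\varepsilon < 1/2$. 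Meanwhile $\tp(f_\varepsilon/E)$ converges (in $L_1$) to the type of some $h \le 0$ with $h \perp E$ and $\|h\|_1 = 1$, for which $f_0 = \bE[|h|\,|\,E] = 0$ and hence $\bE_{1/2}[h|E] = 0$. Thus $\Cb(f/E) \mapsto \bE_{1/2}[f|E]$ is discontinuous on the type space and cannot be definable. What \emph{is} true is that the two non-definable terms cancel: for $\varepsilon < t$ one has $\bE_{[t,s]}[f_\varepsilon|E] = (-1)-(-1) = 0$, matching $\bE_{[t,s]}[h|E]$. Your truncation strategy does not address this, because the obstruction is not unboundedness of the integrand but escape of the mass of $f$ into $E^\perp$; each $\Psi_f^*(r)$ separately carries the offset $-1$ regardless of any cut-off $M$.

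The paper sidesteps this by never attempting to define $\bE_t$ for $p=1$. Instead it uses the bi-definability of \fref{lem:LpLqDefinition} at the level of the \emph{difference} itself: for each $q>1$ the quantity $\bE_{[t,s]}[f^{1/q}|E]^q$ is definable from $\Cb(f/E)$ by the already-established $p>1$ case, and one shows it converges to $\bE_{[t,s]}[f|E]$ as $q\to 1$ at a rate depending only on $\|f\|$, $t$, $s$. The rate is obtained pointwise over $\omega$: after normalising $f_0(\omega)=1$, the increasing rearrangement $r\mapsto f_r(\omega)$ is confined to $[-1/t,\,1/(1-s)]$ on $[t,s]$, and the space of increasing functions $[t,s]\to[-1/t,1/(1-s)]$ is compact in measure, whence the convergence $\bigl(\int_t^s f_r^{1/q}\,dr\bigr)^q \to \int_t^s f_r\,dr$ is uniform in $f$. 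This is precisely where the restriction to $[t,s]\subset(0,1)$ enters.
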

\begin{proof}
  For $p > 1$ this is already known, so we only need to deal with the case of $p = 1$.
  For each $q > 1$ we may apply the bidefinability of $AL1L$ and $ALqL$, and calculate $\bE_{[t,s]}[f|E]^{L_q} = \bE_{[t,s]}[f^{1/q}|E]^q$ uniformly from $\Cb(f/E)$ (as in the proof of \fref{lem:LpLqDefinition}, by $\bE_{[t,s]}[f|E]^{L_q}$ we mean $\bE_{[t,s]}[f|E]$ as calculated in the $L_q$ lattice defined in the $L_1$ lattice).
  It will be enough to show that as $q \to 1$, $\bE_{[t,s]}[f^{1/q}|E]^q \to \bE_{[t,s]}[f|E]$ at a rate which only depends on $\|f\|$.

  Concentrating on what happens over a single $\omega \in \Omega$, all functions in $E$ become constants, and we may identify $f$ with the function $f(t) = f_t$, which is increasing on $[0,1]$.
  If $f_0 = 0$ then everything is zero, so we may assume that $f_0 = \int_0^1 |f(t)|\, dt = 1$.
  Since $f$ is increasing, we must have $f(r) \in [-\frac{1}{t},\frac{1}{1-s}]$ for all $r \in [t,s]$, and by, say, dominated convergence,
  \begin{gather*}
    \left( \int_t^s f(r)^{\frac 1 q}\,dr \right)^q \rightarrow_{q \rightarrow 1} \int_t^s f(r)\,dr.
  \end{gather*}
  The space of all increasing functions $[t,s] \rightarrow [-\frac{1}{t},\frac{1}{1-s}]$ is compact in the topology of convergence in measure, which means that the convergence above is uniform in $f$, that is to say that for any desired $\varepsilon > 0$ there exists $q_0 = q_0(t,s,\varepsilon) > 1$ such that for all $1 < q < q_0$:
  \begin{gather*}
    \left| \left( \int_t^s f(r)^{\frac 1 q}\,dr \right)^q - \int_t^s f(r)\,dr \right| < \varepsilon.
  \end{gather*}
  Integrating we obtain
  \begin{gather*}
    \bigl| \bE_{[t,s]}[f^{1/q}|E]^q - \bE_{[t,s]}[f|E] \bigr| < \varepsilon \|f\|,
  \end{gather*}
  as desired.
\end{proof}

\begin{thm}
  \label{thm:LpUniformCB}
  For every $p \in [1,\infty)$ and every dense subset $D \subseteq (0,1)$ (e.g., $D = \bQ \cap (0,1)$), the tuple $\bigl( \|f^+\|, \|f^-\|, \bE_{[t,s]}[f|E] \bigr)_{t,s \in D, t<s}$ is a uniform canonical base for $\tp(f/E)$, and $\bigl( \bE_{[t,s]}[f|E] \bigr)_{t,s \in D, t<s}$ is a weakly uniform canonical base in the home sort.
\end{thm}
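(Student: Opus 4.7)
The plan is to follow the same template as the proof of \fref{thm:LpNotOneUniformCB}, replacing partial conditional expectations by interval ones so that the argument extends to $p = 1$. First, by \fref{prp:WeaklyUniformCB} it suffices to prove the first assertion, namely that $\bigl( \|f^+\|, \|f^-\|, \bE_{[t,s]}[f|E] \bigr)_{t,s \in D,\, t<s}$ is itself a uniform canonical base. Given some existing uniform canonical base map $\Cb$ (from \fref{lem:ExistUniformCB}) and the characterisation in \fref{lem:BijectionUniformCB}, I only need to exhibit this tuple as the image of $\Cb(f/E)$ under a definable injective map.

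Definability is immediate: $\Cb(f/E) \mapsto \|f^\pm\|$ are definable coordinate-wise, and $\Cb(f/E) \mapsto \bE_{[t,s]}[f|E]$ is definable by \fref{lem:LpUniformIntervalCondExp}, which crucially holds for all $p \in [1,\infty)$.

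The real work is injectivity, i.e., that the tuple determines $\tp(f/E)$. Here I would reconstruct $\Psi_f^*$: since $\bE_{[t,s]}[f|E] = \Psi_f^*(s) - \Psi_f^*(t)$, and $\Psi_f^*$ is continuous on $[0,1]$ with $\Psi_f^*(0) = 0$ by \fref{lem:PsiDoubleLegendreTransform}, letting $t \searrow 0$ along $D$ yields $\Psi_f^*(s) = \lim_{t \to 0,\, t \in D} \bE_{[t,s]}[f|E]$ for every $s \in D$. Density of $D$ together with continuity of $\Psi_f^*$ on $[0,1]$ extends this to all of $[0,1]$. Then $\Psi_f = \Psi_f^{**}$ by \fref{lem:PsiDoubleLegendreTransform} recovers $\Psi_f$, and \fref{lem:PsiDeterminesType} together with $\|f^\pm\|$ recovers $\tp(f/E)$. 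Combined with the definability step, \fref{lem:BijectionUniformCB} delivers the desired uniform canonical base.

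The potential obstacle one might anticipate is the $p = 1$ failure of \fref{lem:LpUniformPartialCondExp}: a priori one could fear needing to reconstruct each individual $\bE_t[f|E]$ from the interval data in a definable manner. But this is unnecessary for the present theorem, because injectivity of the map into the interval tuple is a pointwise, non-uniform statement about a single type, and the limit $\Psi_f^*(s) = \lim_{t \to 0} \bE_{[t,s]}[f|E]$ exists in norm (by continuity of $\Psi_f^*$ on $[0,1]$) without any uniform rate. The pathology exhibited by $f_\varepsilon$ in the discussion preceding the theorem therefore poses no obstruction here.
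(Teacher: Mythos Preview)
Your proposal is correct and follows essentially the same approach as the paper: reduce via \fref{prp:WeaklyUniformCB}, use \fref{lem:LpUniformIntervalCondExp} for definability, and recover $\Psi_f^*(s)=\bE_s[f|E]$ as $\lim_{t\to 0}\bE_{[t,s]}[f|E]$ (pointwise, via continuity of $\Psi_f^*$ at $0$ from \fref{lem:PsiDoubleLegendreTransform}) to feed into \fref{lem:PsiDeterminesType}. Your closing paragraph correctly isolates why the $p=1$ non-uniformity is harmless for the injectivity step.
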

\begin{proof}
  As for \fref{thm:LpNotOneUniformCB}, since $\bE_s[f|E] = \lim_{t \rightarrow 0} \bE_{[t,s]}[f|E]$.
\end{proof}

We have thus produced (weakly) uniform canonical bases in the home sort for $1$-types in $ALpL$.
For $n$-types, we use the following general fact.

\begin{fct}
  The $n$-type $\tp(\bar f\rest_E/E)$ is determined by the $1$-types $\tp(\bar k \cdot \bar f/E)$, where $\bar k \cdot \bar f = \sum_i k_if_i$ and $\bar k$ varies over $\bZ^n$.
\end{fct}
\begin{proof}
  Indeed, this information determines $\tp(\bar t \cdot \bar f/E)$, and in particular $\tp(\bar t \cdot \bar f\rest_E/E)$ for all $\bar t \in \bQ^n$ and therefore for all $\bar t \in \bR^n$.
  Now apply \cite[Proposition~3.7]{BenYaacov-Berenstein-Henson:LpBanachLattices}.
\end{proof}

\begin{thm}
  Let $\Cb$ be a uniform canonical base map for $1$-types.
  Then
  \begin{gather*}
    \tp(\bar f/E)
    \mapsto
    \bigl( \Cb(\bar k \cdot \bar f/E), \tp(\bar f) \bigl)
    _{\bar k \in \bZ^n}
  \end{gather*}
  is a uniform canonical base map for $n$-types.
  (We may view $\tp(\bar f)$ as a sequence in $[0,1]$ via any embedding of $\tS_n(ALpL)$ in $[0,1]^{\aleph_0}$.)

  If $\Cb$ is a weakly uniform canonical base map for $1$-types then
  \begin{gather*}
    \tp(\bar f/E)
    \mapsto
    \bigl( \Cb(\bar k \cdot \bar f/E) \bigl)_{\bar k \in \bZ^n}
  \end{gather*}
  is a weakly uniform canonical base map for $n$-types.
\end{thm}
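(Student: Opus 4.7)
The plan is to reduce to \fref{lem:BijectionUniformCB}: fix some uniform canonical base map $\Cb^n$ for $n$-types (which exists by \fref{lem:ExistUniformCB}), and show that the map
\begin{gather*}
  \Phi\colon \Cb^n(\bar f/E) \;\longmapsto\; \bigl( \Cb(\bar k \cdot \bar f/E),\; \tp(\bar f) \bigr)_{\bar k \in \bZ^n}
\end{gather*}
is a definable injection on $\img \Cb^n$. Definability of the first family of coordinates is immediate from \fref{lem:DefinableFunctionUniformCB}, since $\bar f \mapsto \bar k \cdot \bar f$ is a (partial) Krivine lattice term. For the $\tp(\bar f)$-coordinate, I would fix a countable family $(\varphi_i)$ of $\emptyset$-formulae that topologically embeds $\tS_n(ALpL)$ into $[0,1]^{\aleph_0}$; each $\varphi_i$ has no parameter variables, so its $\varphi_i$-definition for $\tp(\bar f/E)$ is the constant $\varphi_i(\bar f) = d\varphi_i\bigl(\Cb^n(\bar f/E)\bigr)$, which is uniformly definable from $\Cb^n(\bar f/E)$.

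The main work is to establish injectivity of $\Phi$. Suppose $\Phi$ agrees on $\Cb^n(\bar f/E)$ and $\Cb^n(\bar g/E)$. From the first coordinates, $\tp(\bar k \cdot \bar f/E) = \tp(\bar k \cdot \bar g/E)$ for every $\bar k \in \bZ^n$, which by the preceding Fact yields $\tp(\bar f\rest_E/E) = \tp(\bar g\rest_E/E)$, and in particular $\tp(\bar f\rest_E) = \tp(\bar g\rest_E)$. From the second coordinate, $\tp(\bar f) = \tp(\bar g)$. The crux is to recover $\tp(\bar f\rest_{E^\perp})$: in any concrete representation the components $\bar f\rest_E$ and $\bar f\rest_{E^\perp}$ are disjointly supported, so for every lattice term $\tau(\bar x)$ we have $\tau(\bar f) = \tau(\bar f\rest_E) + \tau(\bar f\rest_{E^\perp})$ with disjoint supports, whence
\begin{gather*}
  \|\tau(\bar f)\|^p \;=\; \|\tau(\bar f\rest_E)\|^p + \|\tau(\bar f\rest_{E^\perp})\|^p.
\end{gather*}
Two of the three norms are already determined, so the third is too, uniformly in $\tau$; quantifier elimination for $ALpL$ then forces $\tp(\bar f\rest_{E^\perp}) = \tp(\bar g\rest_{E^\perp})$. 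The same disjoint-support identity, applied to lattice terms in $(\bar x,\bar y)$ with $\bar y$ ranging over $E$, shows that $\tp(\bar f\rest_{E^\perp}/E)$ is in fact determined by $\tp(\bar f\rest_{E^\perp})$ alone (orthogonality with $E$). Combining with $\tp(\bar f\rest_E/E) = \tp(\bar g\rest_E/E)$ and the canonical decomposition $\bar f = \bar f\rest_E + \bar f\rest_{E^\perp}$ yields $\tp(\bar f/E) = \tp(\bar g/E)$, as required.

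For the weakly uniform case, write $\Cb = h \circ \Cb'$ with $\Cb'$ uniform for $1$-types and $h$ definable. The first part applied to $\Cb'$ gives a uniform canonical base map $\tp(\bar f/E) \mapsto \bigl(\Cb'(\bar k \cdot \bar f/E),\tp(\bar f)\bigr)_{\bar k}$; post-composing with the definable function that applies $h$ coordinatewise and discards $\tp(\bar f)$ produces the claimed weakly uniform canonical base map in the desired form. The principal obstacle in the argument is the injectivity step, and specifically the disjoint-support identity for lattice terms: this is exactly the point at which the $\tp(\bar f)$ coordinate carries just enough information to pin down the $E^\perp$-part of the type over $E$.
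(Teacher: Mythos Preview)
Your argument is correct and takes a genuinely different route from the paper's.

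The paper recovers $\tp(\bar f\rest_{E^\perp})$ by introducing the auxiliary element $h = \sum_i |f_i|$, fixing a concrete presentation in which $h\rest_E = \mathbf{1}_A$ and $h\rest_{E^\perp} = \mathbf{1}_B$, and then appealing to \cite[Proposition~3.7]{BenYaacov-Berenstein-Henson:LpBanachLattices} to identify $\tp(\bar f/h)$ and $\tp(\bar f\rest_E/h\rest_E)$ with conditional joint distributions relative to $\{\emptyset,A\cup B\}$ and $\{\emptyset,A\}$, respectively; subtracting yields the distribution relative to $\{\emptyset,B\}$ and hence $\tp(\bar f\rest_{E^\perp})$.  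Your disjoint-support identity $\|\tau(\bar f)\|^p = \|\tau(\bar f\rest_E)\|^p + \|\tau(\bar f\rest_{E^\perp})\|^p$ (valid since every lattice term satisfies $\tau(0)=0$) reaches the same conclusion directly from quantifier elimination, bypassing both the auxiliary $h$ and the conditional-distribution machinery.  This is more elementary and makes the role of $\tp(\bar f)$ as the ``missing complement'' entirely transparent.

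One small point: in your final step you pass from $\tp(\bar f\rest_E/E)$ and $\tp(\bar f\rest_{E^\perp}/E)$ separately to $\tp(\bar f/E)$.  What is actually needed is the \emph{joint} type $\tp(\bar f\rest_E,\bar f\rest_{E^\perp}/E)$, and two types over $E$ do not in general determine their amalgam.  Here, however, the very same disjoint-support identity applied to a term $\tau(\bar x',\bar x'',\bar y)$ gives
\[
  \|\tau(\bar f\rest_E,\bar f\rest_{E^\perp},\bar h)\|^p
  = \|\tau(\bar f\rest_E,0,\bar h)\|^p + \|\tau(0,\bar f\rest_{E^\perp},0)\|^p
\]
for $\bar h \in E$, since $\bar f\rest_{E^\perp}$ is orthogonal to all of $E^{\perp\perp}\supseteq E\cup\{\bar f\rest_E\}$; so the joint type is indeed determined.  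The paper makes the analogous leap (from $\tp(\bar f\rest_{E^\perp})$ to $\tp(\bar f\rest_{E^\perp}/E,\bar f\rest_E)$) without further comment, so you are in good company, but stating the extended identity explicitly would make your argument self-contained.
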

\begin{proof}
  For the first assertion, it is enough to show that $\tp(\bar f/E)$ is determined by these data.
  Indeed, $\tp(\bar f\rest_E/E)$ is already known to be determined.
  Let $h = \sum_i |f_i|$.
  Then $\tp(\bar f\rest_E/E)$ determines $\tp(h\rest_E/E)$, and in particular $\|h\rest_E\|$, while $\|h\|$ is determined by $\tp(\bar f)$, so $\|h\rest_{E^\perp}\|$ is determined as well.
  Alongside the facts that $h\rest_{E^\perp}$ is positive and orthogonal to $E$, this is enough to determine $\tp(h\rest_{E^\perp}/E)$, so $\tp(h/E)$ is determined.
  We may choose any realisation of this type, in a sufficiently saturated extension of $E$, and without changing $\tp(\bar f/E)$, we may assume this realisation is indeed $h$.

  We may further assume that $h\rest_E = \mathbf{1}_A$ and $h\rest_{E^\perp} = \mathbf{1}_B$ in some concrete presentation of the ambient model.
  Now $\tp(\bar f)$ determines $\tp(\bar f/h)$, which, again by \cite[Proposition~3.7]{BenYaacov-Berenstein-Henson:LpBanachLattices}, can be identified with the joint conditional distribution of $\bar f$ with respect to $\{\emptyset,A \cup B\}$ (which is essentially the same thing as the distribution of $\bar f$ restricted to $A \cup B$, with the caveat that $A \cup B$ has finite measure which is not necessarily one, i.e., is not necessarily a probability space).
  Similarly, $\tp(\bar f\rest_E/E)$ determines $\tp(\bar f\rest_E)$ and thus $\tp(\bar f\rest_E/h\rest_E)$, which can be identified with the joint conditional distribution of $\bar f\rest_E$ with respect to $\{\emptyset,A\}$.
  Subtracting, we obtain the joint conditional distribution of $\bar f\rest_{E^\perp}$ with respect to $\{\emptyset,B\}$, namely $\tp(\bar f\rest_{E^\perp}/h\rest_{E^\perp})$, and thus $\tp(\bar f\rest_{E^\perp})$ and finally $\tp(\bar f\rest_{E^\perp}/E,\bar f\rest_E)$.
  Thus $\tp(\bar f/E)$ is known and the proof of the first assertion is complete.
  The second assertion follows.
\end{proof}

\begin{cor}
  For every $p \in [1,\infty)$ and every dense subset $D \subseteq (0,1)$ the tuples
  \begin{gather*}
    \bigl( \tp(\bar f), \bE_{[t,s]}[\bar k \cdot \bar f|E] \bigr)_{t,s \in D, t<s, \bar k \in \bZ^n}
  \end{gather*}
  is a uniform canonical base for $\tp(\bar f/E)$, and
  \begin{gather*}
    \bigl( \bE_{[t,s]}[ \bar k \cdot \bar f|E] \bigr)_{t,s \in D, t<s, \bar k \in \bZ^n}
  \end{gather*}
  is a weakly uniform canonical base in the home sort.
  When $p > 1$ we may replace $\bE_{[t,s]}$ with $\bE_t$.
\end{cor}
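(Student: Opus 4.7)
This is essentially a direct application of the preceding theorem. Let $p \in [1,\infty)$ and a dense $D \subseteq (0,1)$ be given. By \fref{thm:LpUniformCB}, the map
\begin{gather*}
  \Cb_1(f/E) = \bigl( \|f^+\|, \|f^-\|, \bE_{[t,s]}[f|E] \bigr)_{t,s \in D, t<s}
\end{gather*}
is a uniform canonical base map for $1$-types in $ALpL$, and its restriction to the $\bE_{[t,s]}[f|E]$ part is a weakly uniform canonical base map in the home sort. Applying the preceding theorem to $\Cb_1$, we get that
\begin{gather*}
  \tp(\bar f/E) \mapsto \bigl( \Cb_1(\bar k \cdot \bar f/E), \tp(\bar f) \bigr)_{\bar k \in \bZ^n}
\end{gather*}
is a uniform canonical base map for $n$-types, and the corresponding restriction is weakly uniform in the home sort.

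It remains to observe that the real-valued coordinates $\|(\bar k \cdot \bar f)^\pm\|$ appearing in $\Cb_1(\bar k \cdot \bar f/E)$ are already determined by $\tp(\bar f)$ (in fact they are computed from $\bar k \cdot \bar f$ without reference to $E$). Hence, by \fref{lem:BijectionUniformCB}, removing these redundant coordinates yields another uniform canonical base map, namely
\begin{gather*}
  \bigl( \tp(\bar f), \bE_{[t,s]}[\bar k \cdot \bar f|E] \bigr)_{t,s \in D, t<s, \bar k \in \bZ^n},
\end{gather*}
and the home-sort part $\bigl( \bE_{[t,s]}[\bar k \cdot \bar f|E] \bigr)_{t,s \in D, t<s, \bar k \in \bZ^n}$ is weakly uniform by the second half of the preceding theorem.

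For the final assertion, when $p > 1$ we instead apply \fref{thm:LpNotOneUniformCB} in place of \fref{thm:LpUniformCB}, which provides a uniform canonical base for $1$-types using $\bE_t[f|E]$ in place of $\bE_{[t,s]}[f|E]$; the same argument then goes through with $\bE_{[t,s]}$ replaced by $\bE_t$. The only step requiring any care is the verification that the $\|(\bar k \cdot \bar f)^\pm\|$ coordinates are absorbed into $\tp(\bar f)$, but this is immediate since these are $\emptyset$-definable from $\bar f$.
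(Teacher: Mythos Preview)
Your proof is correct and follows the same approach the paper intends: the corollary is obtained by plugging the uniform canonical base map for $1$-types from \fref{thm:LpUniformCB} (respectively \fref{thm:LpNotOneUniformCB} when $p>1$) into the preceding theorem, then observing that the real-valued coordinates $\|(\bar k\cdot\bar f)^\pm\|$ are absorbed by $\tp(\bar f)$ so that the resulting tuple simplifies via \fref{lem:BijectionUniformCB}. The paper itself leaves the corollary without proof, treating it as immediate from the preceding theorem; your write-up is a faithful expansion of that.
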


\begin{rmk}
  At least for $p = 1$ this cannot be improved, in the sense that the types of every $\bar k \cdot \bar f$ need not determine $\tp(\bar f)$.
  Indeed, let $f_1$, $f_2$ and $f_3$ be disjoint positive functions of norm one, and let
  \begin{gather*}
    g = f_1 - f_2,
    \qquad
    h = f_1 + f_2 - 2f_3.
  \end{gather*}
  Then $\tp(kg + \ell h) = \tp(kg-\ell h)$ for all $k,\ell$, but $\tp(g,h) \neq \tp(g,-h)$.
\end{rmk}

\section{On uniform canonical bases and beautiful pairs}
\label{sec:BeautifulPairs}

It is implicitly shown by Poizat \cite{Poizat:Paires}, based on Shelah's f.c.p.\ Theorem \cite{Shelah:ClassificationTheory}, that a stable classical theory does not have the finite cover property if and only if the set of uniform canonical bases (for all types in any one given sort) is definable, rather than merely type-definable (here, a definable set in an infinite sort means a set which is closed under coordinate-wise convergence, and such that the projection to each finite sub-sort is definable in the ordinary sense).
A similar result should hold for continuous logic, where the finite cover property (and in particular Shelah's f.c.p.\ Theorem) have not yet been properly studied.
Here we concentrate on the relation between the existence of a good first order theory for beautiful pairs and the definability of the sets of uniform canonical bases.

We fix a stable theory $T$ in a language $\cL$ admitting quantifier elimination as well as a uniform canonical base map $\Cb$.
We may write the latter as $(\Cb_n)_n$, since it consists of a map for the sort of $n$-tuples for each $n$ (we shall assume that $\cL$ is single sorted, otherwise even more complex notation is required).
We define $\cL_P = \cL \cup \{P\}$, where $P$ is a new unary predicate symbol ($1$-Lipschitz, in the continuous setting).
We also define $\cL_{\Cb}$ to consist of $\cL$ along with, for each $n$, $n$-ary function symbols to the target sorts of $\Cb_n$.
We denote the (possibly infinite) tuple of these new function symbols $f_\Cb(\bar x)$, where $n = |\bar x|$.
In the continuous setting, uniform continuity moduli for the $f_\Cb$ are as per \fref{lem:UniformlyContinuousUniformCB}.
We let $\cL_{P,\Cb} = \cL_P \cup \cL_\Cb$.

By a \emph{pair} of models of $T$ we mean any elementary extension $\cN \preceq \cM \vDash T$.
We shall identify such a pair with the structures $(\cM,P)$, $(\cM,f_\Cb)$ or $(\cM,P,f_\Cb)$, as will be convenient, where $P(x) = d(x,N)$ and $f_\Cb(\bar x) = \Cb(\bar x/N)$.
The property that the predicate $P$ defines an elementary sub-structure is elementary, so the class of all pairs of models of $T$ is elementary as well, and we shall denote its theory by $T_{P,0}$.
Similarly, $T_{P,\Cb,0}$ will be the $\cL_{P,\Cb}$-theory of pairs, which consists in addition of the axioms saying that $f_\Cb(\bar x) = \Cb(\bar x/P)$.

It is easy to check that all these axioms are indeed expressible by an inductive $\cL_P$-theory and an inductive $\cL_{P,\Cb}$-theory, respectively.
Clearly, $T_{P,\Cb,0}$ is a definitional expansion of $T_{P,0}$, so we may unambiguously refer to a model of $T_{P,\Cb,0}$ as $(\cM,P)$.
On the other hand, the predicate $P$ is also superfluous in $T_{P,\Cb,0}$, since it can be recovered in the classical and continuous cases, respectively, as
\begin{gather*}
  P(x) = \exists y\, d[x=y]\bigl( y, \Cb(x/P) \bigr),
  \qquad
  P(x) = \inf_y\, d[d(x,y)]\bigl( y, \Cb(x/P) \bigr).
\end{gather*}
Since $T$ admits quantifier elimination, the formulae on the right hand side can be taken to be quantifier-free.
We may therefore express the same properties as above in an inductive $\cL_\Cb$-theory $T_{\Cb,0}$, for which $T_{P,\Cb,0}$ is merely a quantifier-free definitional expansion.
We may therefore work quite interchangeably in one setting or the other, i.e., with or without the predicate $P$.

\begin{lem}
  \label{lem:TCbAmalgamation}
  The theory $T_{\Cb,0}$ admits amalgamation over arbitrary sets (which generate isomorphic substructures).
  If $T$ is complete, then $T_{\Cb,0}$ also admits the joint embedding property.
\end{lem}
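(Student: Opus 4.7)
The plan is to reduce amalgamation in $T_{\Cb,0}$ to non-forking amalgamation in $T$, using the uniform canonical base to glue the two distinguished submodels. After identifying the isomorphic substructures, I may assume that $(\cM_1, f_\Cb)$ and $(\cM_2, f_\Cb)$ share a common $\cL_\Cb$-substructure $A$. Write $N_i = P^{\cM_i}$. Since $P$ is $\cL_\Cb$-definable, the identification matches $A \cap N_1$ with $A \cap N_2$ to give a single set $A_0$; and if $\bar b$ enumerates the real part of $A$, then the essential consequence of the hypothesis is that $\Cb(\bar b'/N_1) = \Cb(\bar b'/N_2)$ inside $A$ for every finite sub-tuple $\bar b'$ of $\bar b$.

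I would then proceed in four steps. First, amalgamate $N_1$ and $N_2$ over $A_0$ as models of $T$ (using quantifier elimination), obtaining a common $T$-model $N$. Second, use stability of $T$ to extend each $\cM_i$ to a $T$-model $\cM_i' \supseteq \cM_i \cup N$ in which $\cM_i$ is non-forking independent from $N$ over $N_i$. Third --- the key step --- observe that $\tp^{\cM_1'}(\bar b / N) = \tp^{\cM_2'}(\bar b / N)$: each of these is the non-forking extension to $N$ of $\tp(\bar b/N_i)$, so has canonical base $\Cb(\bar b/N_i)$; these two canonical bases agree by hypothesis, and since types over a model in a stable theory are stationary, the two non-forking extensions coincide. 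Fourth, amalgamate $\cM_1'$ and $\cM_2'$ over $N \cup \bar b$ as models of $T$ to produce $\cM^*$.

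Declaring $P^{\cM^*} = N$, quantifier elimination yields $N \preceq \cM^*$, so $\cM^*$ is a model of $T_{\Cb,0}$. The embeddings $\cM_i \hookrightarrow \cM^*$ respect $P$ because the non-forking independence from Step~2 forces $\cM_i \cap N = N_i$, and they respect $f_\Cb$ because for any $\bar c \in \cM_i$ the non-forking of $\tp(\bar c / N)$ over $N_i$ guarantees $\Cb(\bar c/N) = \Cb(\bar c/N_i)$. Joint embedding when $T$ is complete follows from the same argument applied with $A$ taken trivial.

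The main obstacle is Step~3: aligning the two non-forking extensions of $\tp(\bar b/N_i)$ to $N$. It is precisely here that the uniformity of $\Cb$ earns its keep --- the matching of canonical bases given by the hypothesis combines with stationarity of types over models and with preservation of the canonical base under non-forking extensions to produce the required coincidence.
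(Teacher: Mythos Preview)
Your argument is correct and follows essentially the same strategy as the paper's: both exploit the fact that an $\cL_\Cb$-substructure $A$ automatically satisfies $A \ind_{P(A)} P$ (since $\Cb(\bar b/P) \in A$), which is exactly what makes the non-forking amalgamation respect $f_\Cb$. The paper organises things inside one large saturated model and arranges the independences simultaneously, whereas you build the amalgam in explicit stages (first the $P$-parts over $A_0$, then each $\cM_i$ by a non-forking extension, then a final amalgamation over $N\cup\bar b$); this is a stylistic rather than a substantive difference. One small remark: your justification that the embeddings respect $P$ via ``$\cM_i \cap N = N_i$'' is unnecessary and, in the continuous setting, insufficient (equality of the zero-sets does not obviously give $d(x,N)=d(x,N_i)$); but this is harmless, since your argument that $f_\Cb$ is preserved is correct, and $P$ is quantifier-free $\cL_\Cb$-definable, so $P$-preservation follows automatically.
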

\begin{proof}
  Let $(\cM_i,P) \vDash T_{\Cb,0}$ for $i = 0,1$ and let $A$ be a common sub-structure.
  Since, for every sentence $\varphi$, we have $\varphi^{\cM_i} = d\varphi\bigl( \emptyset, \Cb(\emptyset/P) \bigr)$ and $\Cb(\emptyset/P) \in A$, we have $\cM_0 \equiv \cM_1$ even if $T$ is incomplete.
  Let $\cN$ be a sufficiently saturated model of the common complete theory.

  We first choose an embedding $\theta\colon A \rightarrow N$.
  Then, we may choose two extensions, $\theta_i\colon P(M_i) \cup A \rightarrow N$, such that in addition $\theta_0 P(M_0) \ind_{\theta A} \theta_1 P(M_1)$.
  Since $A$ is a sub-structure in $\cL_\Cb$, we know that $A \ind_{P(A)} P(M_i)$, and the same holds for the images under $\theta_i$.
  It follows that $\theta A \ind_{\theta P(A)} \theta_0 P(M_0), \theta_1 P(M_1)$, and we may choose a sub-model $P(\cN) \preceq \cN$ such that $P(N) \supseteq \theta_i P(M_i)$ and $\theta A \ind_{\theta P(A)} P(N)$.
  We may now extend each $\theta_i$ to all of $M_i$, such that $\theta_i M_i \ind_{\theta A,\theta_i P(M_i)} P(N)$.
  Then in particular $\theta_i M_i \ind_{\theta_i P(M_i)} P(N)$.
  Then $\theta_i\colon (\cM_i,P) \rightarrow (\cN,P)$ respects $\cL_\Cb$, and we are done.

  If $T$ is complete then we can amalgamate models over the (unique) $\emptyset$-generated substructure.
\end{proof}
(This argument already appears, in essence, in \cite{BenYaacov-Pillay-Vassiliev:LovelyPairs}, the only novelty is that we use the language $\cL_\Cb$ to ensure that every sub-structure is $P$-independent, i.e., verifies $A \ind_{P(A)} P$.)

It follows that a model companion of $T_{\Cb,0}$ (or of $T_{P,\Cb,0}$, this is the same thing), if it exists, eliminates quantifiers, i.e., it is a model completion.
Even if it does not exist we may still consider it as a Robinson theory in the sense of Hrushovski \cite{Hrushovski:SimplicityAndLascarGroup}.

\begin{lem}
  Modulo $T_{\Cb,0}$, the restriction of every quantifier-free $\cL_\Cb$-formula to $P$ is $\cL$-definable there.
\end{lem}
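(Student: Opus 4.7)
The key step is to establish that for tuples $\bar a$ in $P$, the imaginary value $f_\Cb(\bar a) = \Cb(\bar a/P)$ depends on $\bar a$ via an $\cL$-definable function, uniformly across models of $T_{\Cb,0}$. Once this is secured, an easy induction on the structure of a quantifier-free $\cL_\Cb$-formula $\varphi(\bar x)$ finishes the job: pure $\cL$-subterms remain $\cL$-definable on $P$, while each subexpression of the form $f_\Cb(\bar t(\bar x))$, with $\bar t$ an $\cL$-term, can be replaced by its $\cL$-definable expression in $\bar x$ (and the atomic $\cL^{eq}$-predicates sitting on top of these expressions are then $\cL$-definable in $\bar x$ as well).

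To prove $\cL$-definability of $\bar a \mapsto \Cb(\bar a/P)$ for $\bar a \in P$, observe that the type $p := \tp(\bar a/P)$ is realised in $P$ by $\bar a$ itself, so its $\varphi$-definition is given trivially by the formula $\bar y \mapsto \varphi(\bar a, \bar y)$. Combining this with the defining property of the uniform canonical base map, we obtain, for every $\cL$-formula $\varphi(\bar x, \bar y)$,
\begin{gather*}
  d\varphi\bigl(\bar y, \Cb(\bar a/P)\bigr) \quad = \quad \varphi(\bar a, \bar y),
\end{gather*}
and these identities (ranging over $\varphi$ and $\bar y$) pin down $\Cb(\bar a/P)$ uniquely. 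The graph of the resulting function is therefore type-definable in $\cL$; together with the uniform continuity provided by \fref{lem:UniformlyContinuousUniformCB}, this forces the function itself to be $\cL$-definable. Alternatively, one may view this as a special case of \fref{lem:DefinableFunctionUniformCB}, obtained by applying it to the trivial definable function returning its argument.

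There is no genuine obstacle here; the only point requiring some care is that the atomic formulas appearing on the imaginary target sorts of $f_\Cb$ are, \textit{a priori}, $\cL^{eq}$-predicates. However, since $\Cb(\bar a/P)$ is $\cL$-definable from $\bar a$ in the strong sense above (every coordinate is an $\cL$-definable function), every such $\cL^{eq}$-predicate evaluated at $\Cb(\bar a/P)$ unfolds into an $\cL$-definable predicate of $\bar a$ over $P$. Substituting throughout the original quantifier-free $\cL_\Cb$-formula yields the desired $\cL$-definable predicate on $P$.
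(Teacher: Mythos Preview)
Your proof is correct and follows essentially the same route as the paper. The paper's proof is a two-line remark: for $\bar a \in P$ one has $\Cb(\bar a/P) = \Cb(\bar a/\bar a)$, and the map $\bar a \mapsto \Cb(\bar a/\bar a)$ is $\cL$-definable; you unpack precisely this, observing that the $\varphi$-definition of the realised type is $\varphi(\bar a,\bar y)$ and concluding definability via the type-definable graph (equivalently via \fref{lem:DefinableFunctionUniformCB}), then handling the rest by the obvious induction on formula structure.
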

\begin{proof}
  This follows immediately from the facts that, first, for $\bar a \in P$ we have $\Cb(\bar a/P) = \Cb(\bar a/\bar a)$, and second, the map $\bar a \mapsto \Cb(\bar a/\bar a)$ is definable in $\cL$.
\end{proof}

\begin{dfn}
  Following Poizat \cite{Poizat:Paires}, we say that a pair $(\cM,P)$ of models of $T$ is \emph{beautiful} if $P$ is approximately $\aleph_0$-saturated and $\cN$ is approximately $\aleph_0$-saturated over $\cM$ (namely, the $\cL(M)$-structure $(\cN,a)_{a \in M}$ is approximately $\aleph_0$-saturated).
  We define $T_\Cb^b$ to be the $\cL_\Cb$-theory of all beautiful pairs of models of $T$.
\end{dfn}

\begin{thm}
  \label{thm:BeautifulPairs}
  Let $T$ be a stable theory with quantifier elimination and a uniform canonical base map, as above.
  Then the following are equivalent.
  \begin{enumerate}
  \item
    The image set $\img \Cb_n$ is definable for each $n$ (i.e., its projection to every finite sort is a definable set in that sort).
  \item
    If $(\cM,P) \vDash T_\Cb^b$ is $\kappa$-saturated for some $\kappa > |T|$ then $\cM$ is $\kappa$-saturated over $P(M)$.
    In particular, every sufficiently saturated model of $T_\Cb^b$ is itself a beautiful pair.
  \item
    \label{item:BeautifulPairsSaturation}
    The theory $T_{\Cb,0}$ admits a companion $T_\Cb$ such that for some $\kappa > |T|$, if $(\cM,P) \vDash T_\Cb$ is $\kappa$-saturated then $\cM$ is $\kappa$-saturated over $P(M)$.
    Moreover, such a companion is necessarily the model companion of $T_{\Cb,0}$.
  \item
    The theory $T_{\Cb,0}$ admits a model completion $T_\Cb$ (i.e., its model companion exists and eliminates quantifiers).
    \setcounter{dummycnt}{\value{enumi}}
  \end{enumerate}
    If $T$ is complete, this is further equivalent to:
  \begin{enumerate}
    \setcounter{enumi}{\value{dummycnt}}
  \item
    Let $(\cM,P) \vDash T_{\Cb,0}$, where $P(\cM)$ is $|T|^+$-saturated and $\cM$ is $|T|^+$-saturated over $P(M)$.
    Then $(\cM,P)$ is $\aleph_0$-saturated (and is, moreover, a model of the model companion).
  \item
    There exists an approximately $\aleph_0$-saturated beautiful pair $(\cM,P)$.
  \end{enumerate}
\end{thm}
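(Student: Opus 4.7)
The proof runs as the cycle $(1) \Rightarrow (4) \Rightarrow (3) \Rightarrow (2) \Rightarrow (1)$, with $(5)$ and $(6)$ attached at the end under the completeness hypothesis. The unifying observation is that $\img \Cb_n$ is always type-definable, so its definability is exactly the ingredient needed to axiomatize ``every type over $P$ is realized in $M$'' by a first-order scheme on pairs.

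For $(1) \Rightarrow (4)$, I would take $T_\Cb$ to consist of $T_{\Cb,0}$ together with, for each finite tuple of $\cL$-formulas $\varphi_1(\bar x,\bar y_1), \dots, \varphi_k(\bar x,\bar y_k)$, an axiom stating that whenever $(z_1,\dots,z_k) \in P$ lies in the projection of $\img \Cb_n$ to the relevant canonical-parameter coordinates --- a projection which is definable thanks to $(1)$ --- then some $\bar a$ satisfies $d\varphi_i(\bar y_i,\Cb(\bar a/P)) = d\varphi_i(\bar y_i,z_i)$ for every $i$. These axioms say exactly that every model is existentially closed in $T_{\Cb,0}$, so in conjunction with the amalgamation of \fref{lem:TCbAmalgamation} (which promotes any model companion to a model completion), $T_\Cb$ is the model completion. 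For $(4) \Rightarrow (3)$, quantifier elimination in $T_\Cb$ implies that a $\kappa$-saturated model realizes every $\cL_{P,\Cb}$-type, in particular every $\cL$-type over small subsets of $P(M)$, yielding the stated saturation property. For $(3) \Rightarrow (2)$, the point is that every beautiful pair realizes every $\cL$-type over $P$ in $M$ by definition of beauty, and therefore satisfies the existence axioms of the model companion $T_\Cb$; so every model of $T_\Cb^b$ is a model of $T_\Cb$, and a $\kappa$-saturated model of $T_\Cb^b$ inherits the conclusion of $(3)$.

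The main obstacle is $(2) \Rightarrow (1)$, which I would argue by contradiction. If $\img \Cb_n$ is type-definable but not definable, then continuous-logic compactness yields $\epsilon > 0$ and a tuple $\bar c$ in some pair $(\cM_0, P_0)$, with $\bar c \in P_0$, satisfying the defining partial type up to any prescribed finite approximation yet with $d(\bar c, \img \Cb_n) \geq \epsilon$. Using \fref{lem:TCbAmalgamation} iteratively, I would construct an elementary extension $(\cM, P) \succeq (\cM_0, P_0)$ which is a highly saturated beautiful pair and still has $\bar c$ bounded away from $\img \Cb_n$. Hypothesis $(2)$ then makes $\cM$ saturated over $P$, so the partial $\cL_{P,\Cb}$-type over $\bar c$ asserting ``$\Cb(\bar x/P) = \bar c$'' --- finitely approximately satisfiable by the choice of $\bar c$ --- must be realized, contradicting $d(\bar c, \img \Cb_n) \geq \epsilon$. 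The delicate point is the iterated amalgamation that realizes both halves of beauty (saturation of $P$ and of $M$ over $P$) while preserving the badness of $\bar c$; this is where the bulk of the work lies.

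Under completeness of $T$, $(4) \Rightarrow (5)$ is immediate: the saturation hypotheses of $(5)$ force every existence axiom of $T_\Cb$ to hold in $(\cM, P)$, and quantifier elimination then upgrades this to $\aleph_0$-saturation of the pair. $(5) \Rightarrow (6)$ is a standard saturation upgrade exploiting uniqueness of the saturated model of the completion, and $(6) \Rightarrow (1)$ runs the same contradiction as $(2) \Rightarrow (1)$ using a single approximately $\aleph_0$-saturated beautiful pair, completeness ensuring that one such witness suffices.
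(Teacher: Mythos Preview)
Your cycle runs in the opposite direction from the paper's --- the paper argues (i)$\Rightarrow$(ii)$\Rightarrow$(iii)$\Rightarrow$(iv)$\Rightarrow$(i) --- and this is not merely cosmetic: the reversal forces you to prove existential closedness directly from your explicit axioms in $(1)\Rightarrow(4)$, and that is where the gap lies.

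Your proposed axioms say only that every (finite approximation to an) $n$-type over $P$, with canonical parameters lying in $P$, is realised in $M$. But existential closedness of $(\cM,P)$ in $T_{\Cb,0}$ requires more: given an extension $(\cN,P')$, a tuple $\bar a\in N$ and parameters $\bar b\in M$, you must find $\bar a'\in M$ whose quantifier-free $\cL_\Cb$-type over $\bar b$ agrees with that of $\bar a$. This amounts to realising $\tp_\cL(\bar a/P\cup\bar b)$ in $M$ \emph{with $\bar b$ held fixed}. Your axiom applied to the $(n+m)$-type $\tp(\bar a\bar b/P)$ only produces a realisation $(\bar a',\bar b')$ with $\bar b'\equiv_P\bar b$, and nothing in your scheme lets you force $\bar b'=\bar b$. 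The paper's axiomatisation, by contrast, quantifies over an arbitrary $Z\in\img\Cb$ (not required to lie in $P$) together with an arbitrary tuple $\bar w$, and asserts that the type defined by $Z$ on $P\cup\bar w$ is approximately realised; taking $\bar w=\bar b$ is exactly what pins down the parameters. This stronger scheme is what drives the paper's proof that a $\kappa$-saturated model has $\cM$ $\kappa$-saturated over $P(M)$, which in turn (combined with the automatic $\kappa$-saturation of $P(\cM)$) is what yields existential closedness.

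The same missing ingredient resurfaces in your $(4)\Rightarrow(3)$: quantifier elimination in $\cL_\Cb$ gives you realisation of $\cL_\Cb$-types over small sets, but ``$\cM$ is $\kappa$-saturated over $P(M)$'' means realising $\cL$-types over $P(M)\cup A$ with $|A|<\kappa$ and $P(M)$ possibly large; you need to explain how small $\cL_\Cb$-data encodes such types, and this again comes down to having the $\bar w$ parameter available. Your $(2)\Rightarrow(1)$ and $(6)\Rightarrow(1)$ are essentially the paper's argument for (vi)$\Rightarrow$(i), though the ``iterated amalgamation'' you flag as the bulk of the work is unnecessary under hypothesis $(2)$: any $\kappa$-saturated model of $T_\Cb^b$ already does the job.
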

\begin{proof}
  \begin{cycprf}
  \item[\impnext]
    Let $T_\Cb$ consist of $T_{\Cb,0}$ along with the axioms saying that for every canonical base $Z \in \img \Cb$ and every tuple $\bar w$, the type defined by $Z$ on the domain $P \cup \bar w$ is finitely realised (or, in the continuous setting, approximately finitely realised).
    Since the set $\img \Cb$ is definable, this axiom can indeed be expressed, and clearly $T_\Cb \subseteq T_\Cb^b$.

    Now let $(\cM,P) \vDash T_\Cb$ be $\kappa$-saturated, in which case $P(\cM)$ is $\kappa$-saturated as well.
    Let also $\bar a$ be a tuple in some elementary extension of $\cM$, $C = \Cb(\bar a/M) \in M$, and let $A \subseteq M$, $|A| < \kappa$.
    Let $\pi(\bar x)$ be the partial $\cL_\Cb$-type saying that $\bar x$ realises $\tp(\bar a/A \cup P)$, i.e., that
    \begin{gather*}
      \sup_{\bar y \in P} \bigl| \varphi(\bar x,\bar y\bar b) - d\varphi(\bar y\bar b,C) \bigr| = 0
    \end{gather*}
    for each formula $\varphi(\bar x,\bar y\bar z)$ and $\bar b \in A$.
    By $T_\Cb$, this partial type is approximately finitely realised in $(\cM,P)$, and since $|A \cup C| < \kappa$, it is realised there.
  \item[\impnext]
    It is easy to check that $T_{\Cb,0}$ and $T_\Cb^b$ are companions, whence the main assertion.
    For the moreover part, let $T_\Cb$ be any companion with the stated property, and we shall show that all its models are existentially closed.
    Indeed, let us consider an extension $(\cM,P) \subseteq (\cN,P)$ of models of $T_\Cb$, and we need to show that $(\cM,P)$ is existentially closed in $(\cN,P)$.
    Since the latter is an elementary property of an extension, we may replace the extension with any elementary extension thereof (technically speaking, we represent the extension by $(\cN,P,Q)$, where $Q(x) = d(x,M)$, and take an elementary extension of this).
    We may therefore assume that $(\cM,P)$ is $\kappa$-saturated, so in particular $P(\cM)$ is $\kappa$-saturated, and by assumption $\cM$ is $\kappa$-saturated over $P(M)$.

    Now let $\bar b \subseteq M^m$, $\bar a \in N^n$, and let $p(\bar x,\bar y) = \tp\bigl( \bar a,\bar b/ P(N) \bigr)$, $q(\bar y) = \tp\bigl( \bar b/ P(N) \bigr)$, $C = \Cb(p)$, $D = \Cb(q)$, observing that then $D \subseteq P(M)$.
    By saturation of $P(\cM)$ we may find $C' \subseteq P(M)$ such that $C' \equiv_D C$, and define $p'(\bar x,\bar y)$ to be the type over $P(N)$ defined by $C'$.
    Now, $C \equiv_D C'$ along with $q \subseteq p$ yields $q \subseteq p'$, so $p'(\bar x,\bar b)$ is consistent.
    Therefore, its restriction to $P(M),\bar b$ is realised in $M$, say by $\bar a'$.
    Then $\bar a$ and $\bar a'$ have the same quantifier-free $\cL_\Cb$-type over $\bar b$, which concludes the proof.
  \item[\impnext]
    By \fref{lem:TCbAmalgamation}, the model companion eliminates quantifiers.
    \setcounter{dummycnt}{\value{cycprfcnt}}
  \item[\impfirst]
    For $(\cM,P) \vDash T_\Cb$, let $(\img \Cb_n)^P$ denote the set of all $\Cb(p)$ as $p \in \tS_n\bigl( P(M) \bigr)$ (namely, the type-definable set $\img \Cb_n$ as interpreted in the structure $P(M)$).
    In a sufficiently saturated model of $T_\Cb$, $(\img \Cb_n)^P$ is exactly $f_\Cb(M^n)$, which is definable as a definable image of a definable set.
    It follows that it is uniformly definable in models of $T_\Cb$ (and the image of $f_\Cb$ is always dense there).
    By quantifier elimination, it is quantifier-free definable.
    It follows that $(\img \Cb_n)^P$ is $\cL$-definable in $P$, which means exactly that $\img \Cb_n$ is definable.
    \setcounter{cycprfcnt}{\value{dummycnt}}
  \item[\impnext]
    A close inspection of the argument for (ii) $\Longrightarrow$ (iii) reveals that it proves the following intermediary result: if $(\cM,P) \vDash T_{\Cb,0}$, $P(\cM)$ is $\kappa$-saturated, and $\cM$ is $\kappa$-saturated over $P(M)$, then $(\cM,P)$ is $\aleph_0$-saturated.
    In addition, such a pair is clearly a model of $T_\Cb$ as given there.
  \item[\impnext]
    Clear.
  \item[\impfirst]
    Here we also assume that $T$ is complete.
    For a formula $\varphi(\bar x,\bar y)$, with $|\bar x| = n$, let $f_\varphi$ be the definable function sending $Z$ in the target sort of $\Cb_n$ to the canonical parameter of $d\varphi(\bar y,Z)$.
    The target sort of $f_\varphi$ is equipped with a canonical definable distance $d(z,w) = \sup_{\bar y} |d\varphi(\bar y,z) - d\varphi(\bar y,w)|$.
    We may define $\Cb_\varphi = f_\varphi \circ \Cb_n$, and $\Cb_\varphi(p)$ is a (uniform) $\varphi$-canonical base for $p$.
    Through the coding of several formulae in one, we see that $\img \Cb_n$ is definable if and only if $\img \Cb_\varphi$ is for every such $\varphi$.
    We may therefore assume that $\img \Cb_\varphi$ is not definable for some $\varphi$, keeping in mind that it is always type-definable -- we may in fact identify it with a closed set in $\tS_z(T)$.

    Non definability then implies that for some $\varepsilon > 0$, the set of types $q(z)$ such that $d(q,\img \Cb_\varphi) \geq \varepsilon$ (namely, such that $q(z) \cup r(z) \vdash d(z,w) \geq \varepsilon$ for every $r \in \img \Cb_\varphi$) has an accumulation point $r \in \img \Cb_\varphi \subseteq \tS_z(T)$.
    The partial type asserting that $z \in P$, $r(z)$, and $\inf_{\bar x} g\bigl( z,f_\varphi \circ f_\Cb(\bar x) \bigr) \geq \varepsilon$ is then approximately finitely realised in $(\cM,P)$, and therefore, by the saturation assumption, realised there, say by $c$.
    Then $d\varphi(\bar y,c)$ defines a complete $\varphi$-type $p$ over $P$, which is realised in $\cM$, by the saturation assumption for $\cM$ over $P$, so $d\bigl(c, f_\varphi \circ f_\Cb(M^n) \bigr) = 0$, a contradiction.
  \end{cycprf}
\end{proof}

\begin{cor}
  The image sets $\img \Cb_n$ are definable if and only if $T_{\Cb,0}$ admits a model companion, if and only if this model companion is $T_\Cb^b$ (and is in fact a model completion).
\end{cor}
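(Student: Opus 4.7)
The plan is to read the corollary directly off Theorem \ref{thm:BeautifulPairs}, the only work beyond the theorem being the identification of the abstract model companion with $T_\Cb^b$. First I would dispose of the equivalence between $\img \Cb_n$ being definable (for all $n$) and $T_{\Cb,0}$ admitting a model companion: this is (i) $\Leftrightarrow$ (iv) of the theorem, once one notes that by \fref{lem:TCbAmalgamation} every model companion of $T_{\Cb,0}$ automatically eliminates quantifiers, so that ``model companion'' and ``model completion'' coincide here. This already yields the parenthetical ``and is in fact a model completion''.

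It then remains to show that, whenever this model companion exists, it is $T_\Cb^b$. My approach is to verify that $T_\Cb^b$ satisfies the two defining properties of the model companion, invoking its uniqueness. The companion property has two halves: $T_{\Cb,0} \subseteq T_\Cb^b$ is immediate (a beautiful pair is in particular a pair), and conversely every model of $T_{\Cb,0}$ admits an elementary extension to a beautiful pair by a standard iterated saturation construction (saturate $P$, then saturate the ambient model over $P$, and alternate). For existential closedness of models of $T_\Cb^b$, assuming $\img \Cb_n$ is definable, I would invoke (i) $\Rightarrow$ (ii) to see that a sufficiently saturated $(\cM,P) \vDash T_\Cb^b$ has $P(\cM)$ saturated and $\cM$ saturated over $P(M)$, and then run the back-and-forth on canonical bases appearing in the theorem's proof of (ii) $\Rightarrow$ (iii) verbatim to conclude that $(\cM,P)$ is existentially closed in $T_{\Cb,0}$. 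Since the model companion is unique, this identifies it with $T_\Cb^b$.

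I do not expect any real obstacle; the corollary is genuine bookkeeping on top of Theorem \ref{thm:BeautifulPairs}. The only step requiring a moment's thought is the companion half, namely extending an arbitrary pair elementarily to a beautiful one, but this is a routine saturation-and-iteration argument, of the same flavour as those already used in \fref{lem:TCbAmalgamation} and in the proof of the theorem itself.
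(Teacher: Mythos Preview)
Your approach is correct and is essentially how the corollary is meant to be read off \fref{thm:BeautifulPairs}; the paper itself gives no separate proof. Two small remarks.

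First, a slip: in the companion half you write that every model of $T_{\Cb,0}$ admits an \emph{elementary} extension to a beautiful pair. This is false in general (indeed, it would already imply $T_{\Cb,0} = T_\Cb^b$). The iterated saturation you describe produces an $\cL_\Cb$-\emph{embedding} into a beautiful pair, not an elementary one: at each step where you enlarge $P$ to a saturated $P'$ with $M \ind_P P'$, canonical bases over $P$ are preserved, so the inclusion is an $\cL_\Cb$-morphism, but there is no reason for it to be elementary in $\cL_\Cb$. An embedding is exactly what is needed for companionship, so the argument survives once you drop the word ``elementary''.

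Second, you can shorten the identification step. The proof of (ii)$\Rightarrow$(iii) in \fref{thm:BeautifulPairs} already records that $T_{\Cb,0}$ and $T_\Cb^b$ are companions, and condition (ii) itself says that $T_\Cb^b$ satisfies the saturation hypothesis of (iii); the ``moreover'' clause of (iii) then gives directly that $T_\Cb^b$ is the model companion. So there is no need to re-run the existential-closedness argument: just cite (ii) together with the moreover in (iii).
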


In case these equivalent conditions hold we shall simply denote the model companion by $T_\Cb$, or, in the language $\cL_P$, by $T_P$ (although it is not usually a model companion in the language $\cL_P$).
In this case we say that the class of beautiful pairs of models of $T$ is \emph{weakly elementary}, in the sense that any sufficiently saturated model of the theory of this class, $T_P$, also belongs to it.
By results of Poizat \cite{Poizat:Paires}, for a classical theory $T$ this is further equivalent to $T$ not having the finite cover property.

\begin{cor}
  If $T$ is $\aleph_0$-categorical then $T_\Cb^b$ is the model completion of $T_{\Cb,0}$.
\end{cor}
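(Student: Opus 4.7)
The plan is to reduce the statement to the preceding corollary, which tells us that it is enough to show that $\img \Cb_n$ is definable for every $n$; once that is established, $T_\Cb^b$ is automatically the model completion of $T_{\Cb,0}$. By \fref{lem:BijectionUniformCB} the particular choice of target sort for $\Cb$ is irrelevant, so I would fix any convenient one.

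The key external input I would invoke is the continuous-logic analogue of the Ryll-Nardzewski theorem: when $T$ is $\aleph_0$-categorical, for every sort $Z_0$ of $T^{eq}$ the type space $\tS_{Z_0}(T)$ is compact with respect to the metric $d$, and on it the $d$-topology coincides with the logic topology. This passes from the home sort to the imaginary sorts where canonical bases live essentially for free, since an imaginary sort is a quotient of a home-sort product by a definable pseudo-metric, and such a quotient inherits $d$-compactness.

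Granting this, definability of $\img \Cb_n$ follows from a general principle. The set $\img \Cb_n$ is type-definable by \fref{lem:BijectionUniformCB} (or rather the lemma following it), so its projection $X$ to any finite sub-sort $Z_0$ of the target sort is closed in the logic topology of $\tS_{Z_0}(T)$, hence also $d$-closed. On any metric space the function $d(\cdot, X)$ is automatically $d$-continuous, and by the Ryll-Nardzewski fact above, on the compact space $\tS_{Z_0}(T)$ this is the same as continuity in the logic topology. Hence $d(\cdot, X)$ is a definable predicate and $X$ is a definable set. Since this holds for every finite projection, $\img \Cb_n$ is definable as required, and the previous corollary delivers the conclusion.

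The only delicate point is the transfer of $\aleph_0$-categoricity to the imaginary sorts; but this is routine in continuous logic, since any imaginary element is obtained as a $d$-limit of definable functions of home-sort tuples, and the induced map of type spaces is a continuous surjection, preserving $d$-compactness. With this observation in hand, the corollary reduces to the general fact that in an $\aleph_0$-categorical continuous theory every type-definable set is in fact definable.
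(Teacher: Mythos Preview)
Your proposal is correct and follows exactly the paper's approach: the paper's proof is the single line ``In an $\aleph_0$-categorical theory every type-definable set is definable,'' which is precisely the fact you unpack via the continuous Ryll--Nardzewski theorem, and the conclusion then follows from the preceding corollary. Your additional discussion of why the fact transfers to imaginary sorts is sound and simply makes explicit what the paper takes for granted.
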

\begin{proof}
  In an $\aleph_0$-categorical theory every type-definable set is definable.
\end{proof}

Since the theories discussed in previous sections ($IHS$, $APr$ and $ALpL$) are $\aleph_0$-categorical, they satisfy the equivalent conditions of \fref{thm:BeautifulPairs} in a somewhat uninteresting fashion.
In the next section we consider a more interesting example of a non $\aleph_0$-categorical theory.

Just as we remarked in \fref{sec:UniformCB}, these results can be extended to simple theories, where beautiful pairs are replaced with lovely pairs (see \cite{BenYaacov-Pillay-Vassiliev:LovelyPairs}).
The price to be paid is either to work with hyper-imaginary sorts (which can be done relatively smoothly in continuous logic) or to assume that uniform canonical bases exist in real or imaginary sorts (which is a strong form of elimination of hyper-imaginaries).

\section{The case of $ACMVF$}
\label{sec:PairsInACMVF}

A convenient feature of condition \fref{item:BeautifulPairsSaturation} of \fref{thm:BeautifulPairs} is that it remains invariant under the addition (or removal) of imaginary sorts.
It may therefore serve as a criterion for the definability of the sets of uniform canonical bases even when these do not exist in any of the named sorts.
As an example of this, let us consider the theory $ACMVF$ of algebraically closed metric valued fields, as defined in \cite{BenYaacov:MetricValuedFields}.

\begin{thm}
  The equivalent conditions of \fref{thm:BeautifulPairs} hold for $T = ACMVF$.
\end{thm}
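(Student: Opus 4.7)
The plan is to verify condition (vi) of \fref{thm:BeautifulPairs}, namely to exhibit an approximately $\aleph_0$-saturated beautiful pair $(\cM,P)$ of models of $ACMVF$; equivalently, by the equivalence (v) $\Longleftrightarrow$ (vi), I would start with any $(\cM,P) \vDash T_{\Cb,0}$ such that $P(\cM)$ is $|T|^+$-saturated and $\cM$ is $|T|^+$-saturated over $P(\cM)$, and show that $(\cM,P)$ is then approximately $\aleph_0$-saturated.

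Fix a partial $\cL_\Cb$-type $\pi(\bar x)$ over a countable parameter set $A \subseteq M$, and set $A_1 = A \cap P(M)$, $A_2 \subseteq M$ with $A = A_1 \cup A_2$. Using the quantifier elimination for $ACMVF$ established in \cite{BenYaacov:MetricValuedFields}, the quantifier-free $\cL$-content of $\pi$ is encoded by the values $|f(\bar x)|$ for polynomials $f$ with coefficients in $A$, while the $P$-content is encoded by constraints of the form $f_\Cb(\bar x,\bar a) = c$ for $\bar a \in A_2$, which in turn translate into valuation-theoretic data relating $\bar x \bar a$ to witnesses in $P(\cM)$.

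The realization proceeds in two steps. First, I use $|T|^+$-saturation of $P(\cM)$ to realize inside $P(\cM)$ the induced type on $A_1$ together with the canonical-base witnesses demanded by the $f_\Cb$-constraints, producing an auxiliary tuple $\bar c \in P(M)$. Second, I use $|T|^+$-saturation of $\cM$ over $P(\cM)$ to realize the remaining valuation-theoretic constraints on $\bar x$ over $P(\cM) \cup A_2 \cup \bar c$. The key input is that, by the stability-theoretic analysis of $ACMVF$ in \cite{BenYaacov:MetricValuedFields}, types over models are controlled by the norms of polynomials, so the required extension is parameterized by a prescription of values in the value group, which can always be realized in a sufficiently saturated algebraically closed valued field.

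The principal obstacle is the consistency check glueing the two steps: one must show that the type one wants to realize in Step~2 is in fact consistent once $\bar c$ has been chosen. This amounts to an amalgamation/independence statement: given $\bar c \in P(M)$ realizing the $P$-projected type of $\pi$ and a type over $A_2$ extending the $\cL$-content of $\pi$, their combination is a consistent type over $P(M) \cup A_2$. In $ACMVF$ this follows from stability together with the fact that canonical bases of types are captured by valuation data on polynomials (the metric analogue of the classical situation for $ACVF$), reducing the amalgamation question to the existence of a common extension of two compatible valuations on a polynomial ring over an algebraically closed valued field, which is standard.
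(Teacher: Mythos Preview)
Your argument has a genuine circularity. The decomposition you propose --- ``$\cL$-content'' encoded by values $|f(\bar x)|$ for polynomials $f$, and ``$P$-content'' encoded by constraints $f_{\Cb}(\bar x,\bar a)=c$ --- captures only the \emph{quantifier-free} part of an $\cL_\Cb$-type. Your two-step realisation therefore shows at best that a sufficiently saturated beautiful pair realises every quantifier-free $\cL_\Cb$-type over a countable set. But this much holds for \emph{every} stable theory: it is precisely the content of the general argument inside (ii)$\Rightarrow$(iii) of \fref{thm:BeautifulPairs}, which uses nothing about $ACMVF$ (your ``principal obstacle'', the amalgamation consistency check, is exactly what that general argument already handles). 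To pass from quantifier-free saturation to full $\aleph_0$-saturation of $(\cM,P)$ you would need quantifier elimination for the pair theory, i.e.\ condition (iv), which is equivalent to what you are trying to prove. Were your outline sufficient, the conditions of \fref{thm:BeautifulPairs} would hold for every stable $T$, which is false. Nothing in your sketch supplies an $ACMVF$-specific input at the point where it is actually needed.

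The paper instead verifies condition (iii), and the $ACMVF$-specific work goes into writing down an explicit companion $T_\Cb$: one adjoins to $T_{\Cb,0}$ axioms asserting that for every $b\in P$, $r\in[0,1]$ and integer polynomial $Q(X,\bar e,\bar V)$ there is (approximately) some $a$ with $d(a,b)=d\bigl(a,\sqrt{Q(X,\bar e,P)}\bigr)=r$. A direct valuation-theoretic computation (using the isometry $x\mapsto x^{-1}$ to reduce to $|b|\leq 1$, and a case analysis on $|c|$ versus $|b|$) then shows that in any $\aleph_1$-saturated model of this $T_\Cb$, for every $b\in P\setminus\{\infty\}$, countable $A$ and $r>0$ there exists $a$ with $|a-c|=\max\bigl(r,|b-c|\bigr)$ for all $c$ algebraic over $P\cup A$. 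Since $1$-types over a model in $ACMVF$ are parametrised by closed balls, this realises every $1$-type over $P\cup A$, whence $\cM$ is $\aleph_1$-saturated over $P$ --- which is exactly condition (iii).
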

\begin{proof}
  We recall that a model of $ACMVF$ is not a valued field but rather a projective line $K\bP^1$ over one, equipped with a modified distance function $d(x,y) = \|x-y\| = \frac{|x-y|}{\max |x|,1 \cdot \max |y|,1}$ when $x,y \neq \infty$ and $d(x,\infty) = \frac{1}{\max |x|,1}$.

  Given a polynomial $Q(Z,\bar W,\bar V)$ over $\bZ$ and a tuple $\bar e$, let $\sqrt{Q(X,\bar e,P)}$ denote the collection of all roots of instances $Q(X,\bar e,\bar f)$ where $\bar f \in P$ (or more exactly, of the homogeneisation thereof, so $\infty$ can also be obtained a root).
  Let us define $T_\Cb$ to consist of $T_{\Cb,0}$ along with the axioms saying that every $r \in [0,1]$, $b \in P$ and $Q(X,\bar e,\bar V)$ as above there is $a$ such that $d(a,b) = d\bigl(a, \sqrt{Q(X,\bar e,P)} \bigr) = r$, or at least approximately so.
  One checks that $\sqrt{Q(X,\bar e,P)}$ is a definable set, so this is expressible in continuous logic.

  In order to check that $T_\Cb$ is indeed a companion, let us consider an instance of the axioms.
  Since the map $x \mapsto x^{-1}$ is an isometric bijection of $K\bP^1$, and for every $Q$ there is $Q'$ such that $\bigl\{ x^{-1}\colon x \in \sqrt{Q(X,\bar e,P)} \bigr\} = \sqrt{Q'(X,\bar e,P)}$, we may assume that $|b| \leq 1$.
  We may then add $a$ such that $|a-c| = \max r,|b-c|$ for all $c \in \sqrt{Q(X,\bar e,P)} \setminus \{\infty\}$, so in particular $|a| = \max |b|,r \leq 1$.
  It follows that $d\bigl( a, \sqrt{Q(X,\bar e,P)} \bigr) = d(a,b) = r$.

  Now, let $(K\bP^1,P)$ be an $\aleph_1$-saturated model of $T_\Cb$, and we claim that the opposite holds, namely, that for all $b \in P \setminus \{\infty\}$, countable set $\infty \notin A$ and $r \in (0,\infty)$, if $B$ denotes the algebraic closure of $P \cup A$ then there exists $a$ such that $|x-c| = \max r,|b-c|$ for all $c \in B$.
  We notice that if $|b| < r$ then replacing $b$ with some $b' \in P$ such that $|b'| = r$, which exists by saturation, does not change the conditions of the problem.
  We may therefore assume that $|b| \geq r$.
  If $|b|,r \leq 1$ then (by saturation of $(K\bP^1,P)$) there exists $x$ such that $d(a,B) = d(a,b) = r$, and as in the previous paragraph this $x$ is as desired.
  Otherwise, $|b| > 1$.
  Again by saturation, we find $a$ such that for all $c \in B \setminus \{\infty\}$, $|a-c| = \max r',|z-b^{-1}|$ with $r' = \frac{r}{|b|^2}$ (since $|b^{-1}|,r' \leq 1$).
  We observe that $|a| = \max |b^{-1}|, r|b^{-2}| = |b^{-1}|$, i.e., $|ab| = 1$.
  A direct calculation yields that for $c \in B \setminus \{\infty\}$, $|a^{-1} - c| = \max r\left| \frac{c}{b} \right|, |b-c|$ (the case $c = 0$ may have to be considered separately).
  We now consider three cases (keeping in mind that $|b| \geq r$):
  \begin{gather*}
    \max r\left| \frac{c}{b} \right|, |b-c| =
    \begin{cases}
    \max r\left| \frac{c}{b} \right|, |b| = |b| = \max r,|c-b|, & |c| < |b|,
    \\
    \max r,|c-b|, & |c| = |b|,
    \\
    \max r\left| \frac{c}{b} \right|, |c| = |c| = \max r,|c-b|, & |c| > |b|.
    \end{cases}
  \end{gather*}
  Our claim is thus proved.

  Now let $p(x) \in \tS_1(PA)$, and let $q$ be any global extension of $p$ to $K\bP^1$.
  Since $K\bP^1$ is $\aleph_1$-saturated, there exists $b \in K\bP^1$ such that $r = d(x,b)^{q(x)} = d\bigl[ d(x,y) \bigr]\bigl( b,\Cb(q) \bigr)$ is minimal.
  Replacing $p$ with $q\rest_{PAb}$, we may assume that $b \in A$.
  By our previous claim, there exists $a \in K\bP^1$ such that $d(a,b) = d(a,B) = r$, so this $x$ must realise $p$.

  We have thus shown that for every $\aleph_1$-saturated model $(K\bP^1,P)$ of the companion $T_\Cb$, $K\bP^1$ is $\aleph_1$-saturated over $P$, so by \fref{thm:BeautifulPairs} $T_\Cb$ is the model completion of $T_{\Cb,0}$, and every uniform canonical base map has a definable image.
\end{proof}

On the other hand, there are no canonical bases, so in particular no uniform ones, in the home sort of $ACMVF$.
Indeed, we observed in \cite{BenYaacov:MetricValuedFields} that $1$-types over models are parametrised by spheres, and it is not difficult to see that if $S$ is a sphere of non zero radius then in the home sort $\dcl(S) = \dcl(\emptyset)$.
In the case of $1$-types it is relatively easy to construct an imaginary sort in which uniform canonical bases exist.
Indeed, let us consider the set of all pairs $(a,r)$ with $a \in K\bP^1$ and $r \in [0,1]$, more conveniently written as $a_r$, and let $[a_r]$ denote the closed ball of radius $r$ around $a$.
We define
\begin{gather*}
  d(a_r, b_s) = |r-s| \vee d(a,b) \dotminus (r \wedge s).
\end{gather*}
Let us show that
\begin{gather*}
  d(a_r,c_t) \leq d(a_r,b_s) + d(b_s,c_t).
\end{gather*}
If $d(a_r,c_t) = |r-t|$ then the inequality is clear.
Otherwise, may assume that $d(a,c) \leq d(a,b)$, so
\begin{gather*}
  d(a_r,c_t) = d(a,c) \dotminus (r \wedge t)
  \leq d(a,b) \dotminus (r \wedge s) + |s-t|
  \leq d(a_r,b_s) + d(b_s,c_t).
\end{gather*}
(A somewhat less direct way of observing the same would consist of defining $\varphi(x,a,r) = d(x,a) \dotminus r$ (namely $d(x,[a_r])$), and observe that $d(a_r,b_s) = \sup_x | \varphi(x,a,r) - \varphi(x,b,s) |$.
Clearly $d(a_r,b_s) = 0$ if and only if $[a_r] = [b_s]$.)
The set of all such quotients is incomplete, and the set of completions consists, in addition to all closed balls, of all spheres over the structure (this construction can be carried out in any bounded ultra-metric structure).
In this imaginary sort, $1$-types admit a uniform canonical base map.
The case of canonical bases for $n$-types and general elimination of imaginaries for $ACMVF$ are far more complex, compare with \cite{Haskell-Hrushovski-Macpherson:EliminationOfImaginariesInValuedFields} as well as with more recent results of Hrushovski and Loeser with respect to uniform canonical bases of generically stable types in $ACVF$.

\bibliographystyle{begnac}
\bibliography{begnac}

\providecommand{\bysame}{\leavevmode\hbox to3em{\hrulefill}\thinspace}
\begin{thebibliography}{{Ben}09}

\bibitem[BBH]{BenYaacov-Berenstein-Henson:AlmostIndiscernible}
Itaï \bgroup\scshape{}{Ben Yaacov}\egroup{}, Alexander
  \bgroup\scshape{}Berenstein\egroup{}, and C.~Ward
  \bgroup\scshape{}Henson\egroup{},
  \href{http://math.univ-lyon1.fr/~begnac/articles/SFB.pdf} {\emph{Almost
  indiscernible sequences and convergence of canonical bases}}, submitted,
  \href{http://arxiv.org/abs/0907.4508}{arXiv:0907.4508}.

\bibitem[BBH11]{BenYaacov-Berenstein-Henson:LpBanachLattices}
\bysame, \href{http://math.univ-lyon1.fr/~begnac/articles/Lp.pdf}
  {\emph{Model-theoretic independence in the {B}anach lattices {$L_p(\mu)$}}},
  Israel Journal of Mathematics \textbf{183} (2011), 285--320,
  \href{http://dx.doi.org/10.1007/s11856-011-0050-4}{doi:10.1007/s11856-011-0050-4},
  \href{http://arxiv.org/abs/0907.5273}{arXiv:0907.5273}.

\bibitem[{Ben}a]{BenYaacov:MetricValuedFields}
Itaï \bgroup\scshape{}{Ben Yaacov}\egroup{},
  \href{http://math.univ-lyon1.fr/~begnac/articles/MVF.pdf} {\emph{Model
  theoretic properties of metric valued fields}}, submitted,
  \href{http://arxiv.org/abs/0907.4560}{arXiv:0907.4560}.

\bibitem[{Ben}b]{BenYaacov:RandomVariables}
\bysame, \href{http://math.univ-lyon1.fr/~begnac/articles/RandVar.pdf}
  {\emph{On theories of random variables}}, Israel Journal of Mathematics, to
  appear, \href{http://arxiv.org/abs/0901.1584}{arXiv:0901.1584}.

\bibitem[{Ben}06]{BenYaacov:SchroedingersCat}
\bysame, \href{http://math.univ-lyon1.fr/~begnac/articles/schroed.pdf}
  {\emph{Schrödinger's cat}}, Israel Journal of Mathematics \textbf{153}
  (2006), 157--191,
  \href{http://dx.doi.org/10.1007/BF02771782}{doi:10.1007/BF02771782}.

\bibitem[{Ben}09]{BenYaacov:NakanoSpaces}
\bysame, \href{http://math.univ-lyon1.fr/~begnac/articles/Nakano.pdf}
  {\emph{Modular functionals and perturbations of {N}akano spaces}}, Journal of
  Logic and Analysis \textbf{1:1} (2009), 1--42,
  \href{http://dx.doi.org/10.4115/jla.2009.1.1}{doi:10.4115/jla.2009.1.1},
  \href{http://arxiv.org/abs/0802.4285}{arXiv:0802.4285}.

\bibitem[BPV03]{BenYaacov-Pillay-Vassiliev:LovelyPairs}
Itaï \bgroup\scshape{}{Ben Yaacov}\egroup{}, Anand
  \bgroup\scshape{}Pillay\egroup{}, and Evgueni
  \bgroup\scshape{}Vassiliev\egroup{},
  \href{http://math.univ-lyon1.fr/~begnac/articles/pairs.pdf} {\emph{Lovely
  pairs of models}}, Annals of Pure and Aplied Logic \textbf{122} (2003),
  235--261,
  \href{http://dx.doi.org/10.1016/S0168-0072(03)00018-6}{doi:10.1016/S0168-0072(03)00018-6}.

\bibitem[BU10]{BenYaacov-Usvyatsov:CFO}
Itaï \bgroup\scshape{}{Ben Yaacov}\egroup{} and Alexander
  \bgroup\scshape{}Usvyatsov\egroup{},
  \href{http://math.univ-lyon1.fr/~begnac/articles/cfo.pdf} {\emph{Continuous
  first order logic and local stability}}, Transactions of the American
  Mathematical Society \textbf{362} (2010), no.~10, 5213--5259,
  \href{http://dx.doi.org/10.1090/S0002-9947-10-04837-3}{doi:10.1090/S0002-9947-10-04837-3},
  \href{http://arxiv.org/abs/0801.4303}{arXiv:0801.4303}.

\bibitem[Fre04]{Fremlin:MeasureTheoryVol3}
D.~H. \bgroup\scshape{}Fremlin\egroup{},
  \href{http://www.essex.ac.uk/maths/staff/fremlin/mt3.2004/index.htm}
  {\emph{Measure theory volume 3: Measure algebras}}, Torres Fremlin, 25 Ireton
  Road, Colchester CO3 3AT, England, 2004.

\bibitem[HHM06]{Haskell-Hrushovski-Macpherson:EliminationOfImaginariesInValuedFields}
Deirdre \bgroup\scshape{}Haskell\egroup{}, Ehud
  \bgroup\scshape{}Hrushovski\egroup{}, and Dugald
  \bgroup\scshape{}Macpherson\egroup{}, \emph{Definable sets in algebraically
  closed valued fields: elimination of imaginaries}, Journal für die Reine und
  Angewandte Mathematik. [Crelle's Journal] \textbf{597} (2006), 175--236,
  \href{http://dx.doi.org/10.1515/CRELLE.2006.066}{doi:10.1515/CRELLE.2006.066}.

\bibitem[HKP00]{Hart-Kim-Pillay:CoordinatisationAndCanonicalBases}
Bradd \bgroup\scshape{}Hart\egroup{}, Byunghan \bgroup\scshape{}Kim\egroup{},
  and Anand \bgroup\scshape{}Pillay\egroup{}, \emph{Coordinatisation and
  canonical bases in simple theories}, Journal of Symbolic Logic \textbf{65}
  (2000), no.~1, 293--309,
  \href{http://dx.doi.org/10.2307/2586538}{doi:10.2307/2586538}.

\bibitem[Hru97]{Hrushovski:SimplicityAndLascarGroup}
Ehud \bgroup\scshape{}Hrushovski\egroup{}, \emph{Simplicity and the {L}ascar
  group}, unpublished, 1997.

\bibitem[LT79]{Lindenstrauss-Tzafriri:ClassicalBanachSpacesII}
Joram \bgroup\scshape{}Lindenstrauss\egroup{} and Lior
  \bgroup\scshape{}Tzafriri\egroup{}, \emph{Classical {B}anach spaces. {II}},
  Ergebnisse der Mathematik und ihrer Grenzgebiete [Results in Mathematics and
  Related Areas], vol.~97, Springer-Verlag, Berlin, 1979, Function spaces.

\bibitem[Pil96]{Pillay:GeometricStability}
Anand \bgroup\scshape{}Pillay\egroup{}, \emph{Geometric stability theory},
  Oxford Logic Guides, vol.~32, The Clarendon Press Oxford University Press,
  New York, 1996, Oxford Science Publications.

\bibitem[Poi83]{Poizat:Paires}
Bruno \bgroup\scshape{}Poizat\egroup{}, \emph{Paires de structures stables},
  Journal of Symbolic Logic \textbf{48} (1983), no.~2, 239--249,
  \href{http://dx.doi.org/10.2307/2273543}{doi:10.2307/2273543}.

\bibitem[Roc70]{Rockafellar:ConvexAnalysis}
R.~Tyrrell \bgroup\scshape{}Rockafellar\egroup{}, \emph{Convex analysis},
  Princeton Mathematical Series, No. 28, Princeton University Press, Princeton,
  N.J., 1970.

\bibitem[She90]{Shelah:ClassificationTheory}
Saharon \bgroup\scshape{}Shelah\egroup{}, \emph{Classification theory and the
  number of nonisomorphic models}, second ed., Studies in Logic and the
  Foundations of Mathematics, vol.~92, North-Holland Publishing Co., Amsterdam,
  1990.

\end{thebibliography}

\end{document}